\DeclareMathAlphabet{\mathcal}{OMS}{cmsy}{m}{n}
\newcommand{\D}{\mathcal{D}}
\newcommand{\R}{\mathcal{R}}
\newcommand{\E}{\mathcal{E}}
\newcommand{\A}{\mathcal{A}}
\renewcommand{\P}{\mathcal{P}}
\newcommand{\B}{\mathcal{B}}
\newcommand{\C}{\mathcal{C}}
\newcommand{\F}{\mathcal{F}}
\newcommand{\mcL}{\mathcal{L}}
\newcommand{\mcO}{\mathcal{O}}
\newcommand{\fl}{\preceq}
\newtheorem{prop}{Proposition}[section]
\newtheorem{thm}[prop]{Theorem}
\newtheorem{cor}[prop]{Corollary}
\newtheorem{lem}[prop]{Lemma}
\theoremstyle{definition}
\newtheorem{defn}[prop]{Definition}
\newtheorem{exmp}[prop]{Example}
\newtheorem*{assump}{Standing Assumptions}
\newtheorem{rem}[prop]{Remark}
\newlist{thmenum}{enumerate}{10}
\setlist[thmenum,1]{label=\textnormal{(\alph*)}}
\setlist[thmenum,2]{label=\textnormal{(\roman*)}}
\begin{document}

\title[Labelled Graphs and Morita Equivalence]{Labelled Graphs as Morita equivalence invariants for a class of inverse semigroups}

\author[Z. Duah]{Zachary Duah}
\address{Department of Mathematics\\
University of Michigan\\
530 Church Street\\
Ann Arbor, MI 48109}
\email{duah.zach@gmail.com}

\author[S. Du Preez]{Stian Du Preez}
\address{Department of Mathematics\\
Rice University\\
PO Box 1892\\
Houston, TX 77005-1892}
\email{stiandupreez6@gmail.com}

\author[D. Milan]{David Milan}
\address{Department of Mathematics\\
The University of Texas at Tyler\\
3900 University Boulevard\\
Tyler, TX 75799}
\email{dmilan@uttyler.edu}

\author[S. Ramamurthy]{Shreyas Ramamurthy}
\address{Department of Mathematics\\
University of California, Berkeley\\
970 Evans Hall \#3840\\
Berkeley, CA 94720-3840}
\email{shreyas.ramamurthy@berkeley.edu}

\author[L. Vega]{Lucas Vega}
\address{Department of Mathematics\\
The University of Texas at Tyler\\
3900 University Boulevard\\
Tyler, TX 75799}
\email{lvega2@patriots.uttyler.edu}

\thanks{The authors were supported by an NSF grant (DMS-2149921).}
\keywords{inverse semigroup, Morita equivalence, labelled graph}

\date{\today}
\subjclass[2010]{20M18}

\begin{abstract} We investigate the use of labelled graphs as a Morita equivalence invariant for inverse semigroups. We construct a labelled graph from a combinatorial inverse semigroup $S$ with $0$ admitting a special set of idempotent $\D$-class representatives and show that $S$ is Morita equivalent to a labelled graph inverse semigroup. For the inverse hull $S$ of a Markov shift, we show that the labelled graph determines the Morita equivalence class of $S$ among all other inverse hulls of Markov shifts.
\end{abstract}

\maketitle

\section{Introduction}

In his influential work on the tight groupoid of an inverse semigroup, Exel \cite{ExelBig} summarized the construction of $C^*$-algebras from combinatorial objects with the following diagram:
\[\fbox{combinatorial object}\to \fbox{inverse semigroup}\to \fbox{groupoid}\to \fbox{$C^*$-algebra}\]
The standard example of such a construction starts with a directed graph $\Gamma$ and concludes with the graph $C^*$-algebra of $\Gamma$. 

In this paper, we are interested in reversing the first arrow. We want to recover combinatorial data from an inverse semigroup that is sufficient to characterize the inverse semigroup, at least up to Morita equivalence. This has potential applications in $C^*$-algebra theory, since for example one could develop transformations similar to the graph moves in S{\o}renson \cite{SorensenGraph} on a broader collection of combinatorial objects as a path towards ``geometric'' classification results for large classes of inverse semigroup $C^*$-algebras.

Inspired by the characterization of graph inverse semigroups in \cite{LawsonGraph}, it was shown in \cite{REU2022} that from an inverse semigroup $S$ one can construct a directed graph whose vertices are the nonzero $\mathcal{D}$-classes of $S$ and whose edges are defined using the natural partial order on the idempotents. This directed graph is a Morita equivalence invariant for inverse semigroups. It is a complete invariant for combinatorial inverse semigroups with $0$ satisfying two additional properties:
\begin{enumerate}
\item for $ e,f,g \in E(S)$ with $e,f \leq g$ and $ef \neq 0$, $e$ and $f$ are comparable.
\item for $0 \neq e \leq f$, there are finitely many idempotents between $e$ and $f$.
\end{enumerate}
Moreover, the inverse semigroups that satisfy the above properties are precisely the inverse semigroups that are Morita equivalent to graph inverse semigroups. 

If $S$ does not satisfy property (1), we say that $S$ \emph{contains a diamond}. If $S$ contains a diamond, then the directed graph associated with $S$ will be insufficient to characterize Morita equivalence---we need a more detailed invariant than a directed graph. We replace (1) with the property that there is a partial order $\fl$ on the $\D$-classes of $S$ making $S/\D$ into a meet semilattice and satisfying a number of other properties. We can then pass to a Morita equivalent inverse semigroup $S^\fl$ that admits a \emph{coherent} set $\C$ of idempotent $\D$-class representatives (see Definition \ref{def:coherent}). Given a coherent set $\C$, we construct a labelled graph and consider its inverse semigroup which is essentially the same as the one defined and studied by Boava, de Castro, and de L. Mortari in \cite{labelledgraphs}. We show in Theorem \ref{thm:labelledgraph} that any combinatorial inverse semigroup with $0$ having a coherent set $\C$ and finite intervals is Morita equivalent to the inverse semigroup of the labelled graph. This generalizes results of \cite{REU2022} to many inverse semigroups containing diamonds. We use this result to prove in Theorem \ref{thm:markovmorita} that the labelled directed graph associated to the inverse semigroup of a Markov shift is a complete invariant among inverse hulls of Markov shifts. Consequently, to determine Morita equivalence of two inverse hulls of Markov shifts, one only needs to examine a finite portion of each semilattice.

\section{Morita Equivalence}
In this section we give a number of definitions and useful results pertaining to Morita equivalence of inverse semigroups. For a more thorough treatment of the subject, see the thesis of Afara \cite{AfaraThesis}, \cite{SteinbergMorita}, or \cite{FunkLawsonSteinberg}. Among the many equivalent definitions of Morita equivalence, the one given in terms of category equivalence is usually the most convenient. The \emph{idempotent splitting} of an inverse semigroup $S$ is a category $C(S)$ with objects $E(S)$ and morphisms $\{(e,s,f): e,f \in E(S), s \in eSf\}$. Composition is given by
\[
	(e,s,f)(f,t,g) = (e, st, g).
\]
Two objects (idempotents) $e,f$ are isomorphic in $C(S)$ if and only if they are $\D$-related in $S$. We write $[s]$ for the $\D$-class of $s$ in $S$. Given inverse semigroups $S$ and $T$, $S$ is \emph{Morita equivalent} to $T$ if and only if the categories $C(S)$ and $C(T)$ are equivalent. Let $T$ be an inverse subsemigroup of an inverse semigroup $S$. Then $S$ is an \emph{enlargement} of $T$ if $STS = S$ and $TST = T$. We will make use of the fact that if $S$ is an enlargement of $T$, then $S$ is Morita equivalent to $T$.
 
\begin{lem}\label{lem:equivalence}
    Suppose that $S$ and $T$ are combinatorial inverse semigroups and $\F:C(S) \to C(T)$ is an equivalence functor. Then
    \begin{itemize}
        \item [(1)] the map $\sigma([e]) = [\F(e)]$ defines a bijection $\sigma : S/\D \to T/\D$, and
        \item [(2)] for $e \in E(S)$ and $f \in E(T)$ with $\F(e) \D f$, there exists an isomorphism $\tau_{e,f}:eSe \to fTf$ such that for all $g \leq e$, we have $\sigma([g]) = [\tau_{e,f}(g)]$.
    \end{itemize}
\end{lem}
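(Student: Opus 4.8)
The plan is to prove both statements by exploiting the structure of the idempotent splitting category $C(S)$ together with the fact that, in a combinatorial inverse semigroup, the $\H$-relation is trivial, so each local submonoid $eSe$ is determined very rigidly by its idempotents. The key observation is that an equivalence functor $\F : C(S) \to C(T)$ carries isomorphisms to isomorphisms, and two objects of $C(S)$ are isomorphic exactly when they are $\D$-related; this is the bridge between the categorical equivalence and the combinatorial $\D$-class data that we want to track.

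For part (1), I would first check that $\sigma$ is well defined. Since isomorphism of objects in $C(S)$ coincides with the $\D$-relation, if $[e] = [e']$ then $e \D e'$, so $e$ and $e'$ are isomorphic objects, hence $\F(e)$ and $\F(e')$ are isomorphic objects of $C(T)$, which means $\F(e) \D \F(e')$ and thus $[\F(e)] = [\F(e')]$. So $\sigma([e]) = [\F(e)]$ does not depend on the representative. Injectivity follows by running the same argument in reverse using that an equivalence functor reflects isomorphisms: if $[\F(e)] = [\F(e')]$ then $\F(e) \D \F(e')$, so $\F(e)$ and $\F(e')$ are isomorphic, whence $e$ and $e'$ are isomorphic and $e \D e'$. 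For surjectivity I would use that an equivalence functor is essentially surjective: every idempotent $f \in E(T)$ is isomorphic to some $\F(e)$, i.e. $f \D \F(e)$, so $\sigma([e]) = [f]$. This part is routine category theory once the isomorphism/$\D$ correspondence is in place.

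For part (2), the main point is to turn the categorical data into a semigroup isomorphism $eSe \to fTf$ and to make it compatible with $\sigma$ on idempotents below $e$. Given $e \in E(S)$ and $f \in E(T)$ with $\F(e) \D f$, I would like $\tau_{e,f}$ to be induced by $\F$ on the endomorphism monoid of the object $e$. Concretely, the endomorphisms of $e$ in $C(S)$ are exactly the triples $(e,s,e)$ with $s \in eSe$, so $\operatorname{End}_{C(S)}(e) \cong eSe$ as monoids; similarly $\operatorname{End}_{C(T)}(f) \cong fTf$. Since $\F$ is fully faithful it gives a monoid isomorphism $\operatorname{End}_{C(S)}(e) \to \operatorname{End}_{C(T)}(\F(e))$, and then I would transport along a chosen isomorphism $\F(e) \D f$ in $C(T)$ (conjugation by an element realizing the $\D$-equivalence) to land in $\operatorname{End}_{C(T)}(f) \cong fTf$. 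Composing these yields the desired isomorphism $\tau_{e,f} : eSe \to fTf$. To verify the compatibility $\sigma([g]) = [\tau_{e,f}(g)]$ for idempotents $g \leq e$, I would note that $g \leq e$ corresponds to $g$ being an idempotent endomorphism of $e$, that $\F$ and the transport preserve idempotents, and that $\tau_{e,f}(g)$ is then an idempotent of $fTf$ whose object $\F(g)$ (up to the chosen isomorphism) is isomorphic to it, so $[\tau_{e,f}(g)] = [\F(g)] = \sigma([g])$.

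The main obstacle, and the step requiring the most care, is the compatibility claim in part (2): ensuring that the conjugation transport from $\operatorname{End}_{C(T)}(\F(e))$ to $fTf$ interacts correctly with the $\D$-class map $\sigma$ on all idempotents $g \leq e$ simultaneously. The subtlety is that $\tau_{e,f}$ depends on a choice of isomorphism $\F(e) \to f$ in $C(T)$, and a priori different choices could shift individual images by conjugation; what must be checked is that the $\D$-class $[\tau_{e,f}(g)]$ is independent of this choice, which holds precisely because conjugation preserves $\D$-classes. Here combinatoriality is doing real work: triviality of $\H$ guarantees that the isomorphism $\tau_{e,f}$ is forced on the idempotent structure and that distinct idempotents $g \leq e$ map to idempotents in distinct $\D$-classes exactly as $\sigma$ records, so the diagram of $\D$-classes genuinely commutes rather than merely up to some ambiguity. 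I would close the argument by verifying that $\tau_{e,f}$ restricted to idempotents agrees with $\sigma$ via the object map of $\F$, which is the content we extracted in part (1).
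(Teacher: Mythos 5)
Your proposal is correct and takes essentially the same route as the paper: part (1) via the correspondence between isomorphism in the idempotent splitting and the $\D$-relation, and part (2) by composing the endomorphism-monoid isomorphism $eSe \cong \operatorname{End}_{C(S)}(e) \to \operatorname{End}_{C(T)}(\F(e)) \cong \F(e)T\F(e)$ supplied by full faithfulness with a $\D$-class-preserving conjugation $t \mapsto x t x^*$ (where $x^*x = \F(e)$, $xx^* = f$) onto $fTf$. The one step you leave informal---that the image of an idempotent $g \leq e$ is $\D$-related to $\F(g)$---is made precise in the paper by applying $\F$ to the morphism $(e,g,g)$, yielding $y$ with $yy^* = \tau_{e,f,1}(g)$ and $y^*y = \F(g)$, which is exactly your observation that the functor preserves the splitting of the idempotent endomorphism through the object $g$.
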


\begin{proof}
    For $e \in E(S)$, let $\sigma([e]) = [\F(e)]$. Since two objects in the idempotent splitting are isomorphic if and only if they are $\D$-related, it is well known that $\sigma$ is a bijection between $S / \D$ and $T / \D$.

    Fix $e \in E(S)$ and $f \in E(T)$ with $f \D \F(e)$. Let $\tau_{e,f,1}: eSe \to \F(e)T\F(e)$ be the isomorphism defined by the equivalence functor, that is $\tau_{e,f,1}(s) = t$ where $\F(e,s,e) = (\F(e),t,\F(e))$. Choose $x_{e,f} \in T$ such that $x_{e,f}^*x_{e,f} = \F(e)$ and $x_{e,f}x_{e,f}^* = f$. Define $\tau_{e,f,2} : \F(e)T\F(e) \to fTf$ by $\tau_{e,f,2}(t) = x_{e,f} t x_{e,f}^*$. This is known to be an isomorphism that preserves $\D$-classes. Then $\tau_{e,f} = \tau_{e,f,2} \circ \tau_{e,f,1}$ is an isomorphism from $eSe$ to $fTf$. Finally, let $g \leq e$. Choose $y$ such that $\F(e,g,g) = (\F(e),y,\F(g))$. Then $yy^* = \tau_{e,f,1}(g)$ and $y^*y = \F(g)$. Since $\tau_{e,f,2}$ preserves $\D$-classes
    \[
        \sigma([g]) = [\F(g)] = [\tau_{e,f,1}(g)] = [(\tau_{e,f,2} \circ \tau_{e,f,1})(g)] = [\tau_{e,f}(g)].
    \]
\end{proof}

A number of results in this paper assume we have chosen a \emph{complete set $\C$ of idempotent representatives} of the $\D$-classes of an inverse semigroup $S$. By this we mean that $\C$ is a set of idempotents such that each $\D$-class of $S$ has a unique representative in $\C$. We now give some useful facts related to this situation. 

\begin{prop}\label{prop:enlargement} Suppose $S$ is an inverse semigroup and that $\C \subseteq E(S)$ contains a representative of each $\mathcal{D}$-class of $S$. Then $S$ is Morita equivalent to the inverse semigroup $\C S \C$.
\end{prop}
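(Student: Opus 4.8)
The plan is to invoke the stated criterion that an enlargement of an inverse semigroup is Morita equivalent to it, so that it suffices to show $S$ is an enlargement of $T := \C S \C$. First I would check that $T$ is an inverse subsemigroup of $S$: it is closed under the involution, since $(csc')^* = c's^*c$ again has a factor of $\C$ on each end, and it is closed under multiplication, since a product $(c_1 s_1 c_1')(c_2 s_2 c_2')$ is likewise bracketed by $\C$ on the left and right. It then remains to verify the two enlargement identities $TST = T$ and $STS = S$.

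The identity $TST = T$ is the routine half. The inclusion $TST \subseteq T$ is immediate because any element of $TST$ is bracketed by an element of $\C$ on the left and on the right, hence already lies in $\C S \C = T$. For the reverse inclusion, given $t \in T$ I would write $t = (tt^*)\,t\,(t^*t)$ and use that $tt^*, t^*t \in T$ (as $T$ is an inverse subsemigroup), exhibiting $t$ as an element of $TST$.

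The crux is the identity $STS = S$, and this is where the hypothesis on $\C$ enters; only $S \subseteq STS$ requires argument. Fix $s \in S$ and set $e := ss^*$ and $f := s^*s$, so that $s = esf$. Since $\C$ meets every $\D$-class, there exist $c_1, c_2 \in \C$ with $e \mcD c_1$ and $f \mcD c_2$, and hence partial isometries $u, v \in S$ with $uu^* = e$, $u^*u = c_1$, $vv^* = f$, and $v^*v = c_2$. A short computation then gives $e = u c_1 u^*$ and $f = v c_2 v^*$, and substituting yields
\[
s = esf = u\,(c_1 u^* s v c_2)\, v^*,
\]
where $c_1 u^* s v c_2 \in \C S \C = T$ while $u, v^* \in S$, so $s \in STS$.

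I expect the main obstacle to be precisely this last step, namely producing the factorizations $e = u c_1 u^*$ and $f = v c_2 v^*$ from the $\D$-relations between an idempotent and its chosen representative. This rests on the standard characterization that two idempotents are $\D$-related exactly when they arise as the range and source projections $uu^*$ and $u^*u$ of a common element $u$, which supplies the partial isometries used above. Once both enlargement identities are established, the cited fact that an enlargement is Morita equivalent to its subsemigroup completes the proof.
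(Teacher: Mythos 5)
Your proposal is correct and follows essentially the same route as the paper: reduce to showing that $S$ is an enlargement of $T = \C S \C$, handle $TST = T$ as the routine half, and use $\D$-class representatives (via elements $u$ with prescribed range and source idempotents) to prove $S = STS$. The only cosmetic difference is that the paper factors through a single representative of $[s^*s]$, writing $s = sx^*(xx^*)x$ where $x^*x = s^*s$ and $xx^* \in \C$, whereas you symmetrize and use representatives of both $[ss^*]$ and $[s^*s]$; both arguments work.
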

\begin{proof} Let $T = \C S \C$. We will prove that $S$ is an enlargement of $T$. One can quickly show that $T$ is an inverse subsemigroup of $S$ for which $T = TST$. We prove that $S = S T S$. First, we clearly have $STS \subseteq S$. Let $s \in S$. Then there exists $x \in S$ such that $s^*s = x^* x$ and $x x^* \in \C$. Then $s = s x^* x = sx^* (xx^*) x \in STS$. So $S = STS$.
\end{proof}

Afara and Lawson \cite{AfaraLawson} showed how to construct all inverse semigroups Morita equivalent to a given inverse semigroup $S$ using McAlister functions. A map $p:I\times I \to S$ is called a \emph{McAlister function} if it satisfies the following properties:
\begin{enumerate}
\item[(MF1):] $p_{i,i}$ is an idempotent for all $i \in I$.
\item[(MF2):] $p_{i,i}p_{i,j}p_{j,j} = p_{i,j}$.
\item[(MF3):] $p_{i,j} = p_{j,i}^{-1}$.
\item[(MF4):] $p_{i,j}p_{j,k} \leq p_{i,k}$. 
\item[(MF5):] For each $e \in E(S)$ there exists $i \in I$ such that $e \leq p_{i,i}$.
\end{enumerate}

From a McAlister function one constructs the inverse semigroup $IM(S, I, p)$ consisting of equivalence classes $[i, s, j]$ under a relation $\gamma$ defined on triples $(i,s,j)$ where $ss^* \leq p_{i,i}$ and $s^*s \leq p_{j,j}$. The equivalence relation $\gamma$ is defined by $(i,s,j) \gamma (k, t, l)$ if and only if $s = p_{i,k} t p_{l,j}$ and $t = p_{k,i} s p_{j,l}$. Multiplication in $IM(S,I,p)$ is given by $[i,s,j][k,t,l] = [i, s p_{j,k} t, l]$. Then $IM(S,I,p)$ is Morita equivalent to $S$. Moreover, if $T$ is any inverse semigroup that is Morita equivalent to $S$, then $T$ is isomorphic to $IM(S,I,p)$ for some set $I$ and McAlister function $p$.

Suppose now that $S$ is an inverse semigroup with $0$ and let 
\[ 
    \C = \{ e_v : v \in S / \D\} 
\]
be a complete set of idempotent representatives of the $\D$-classes of $S$. We assume that $\C$ is \emph{dominating} in the following sense: for every idempotent $e \in E(S)$, there exists $v \in S/\D$ such that $e \leq e_v$. We also assume that there is a partial order $\fl$ on $S/\D$ making it into a meet semilattice and satisfying the following for all $u,v, w \in S/\D$:
\begin{enumerate}

\item if $u \wedge v \neq 0$, then $e_u e_v = e_{u \wedge v}$, and

\item if $u,v \fl w$, then $e_u e_v = e_{u \wedge v}$.

\end{enumerate}

Let $I = S / \D$ and define $p: I \times I \to S$ by $p_{u,v} = e_{u \wedge v}$. Then $p$ satisfies (MF1)-(MF5). To see (MF4), fix $u,v,w \in I$ and suppose that $0 \neq p_{u,v}p_{v,w}$. Since $u \wedge v, v \wedge w \fl v$ we have
\[
    0 \neq p_{u,v}p_{v,w} = e_{u \wedge v}e_{v \wedge w} = e_{u \wedge v \wedge w}.
\]
In particular, $0 \neq u \wedge v \wedge w \fl u \wedge w$. Thus
\[
    p_{u,v}p_{v,w} = e_u e_v e_w \leq e_u e_w = p_{u,w}.
\]

Let $S^{\fl} = IM(S,I,p)$. From the work of Afara and Lawson, we have that $S$ is Morita equivalent to $S^{\fl}$.

We now describe the idempotents and the partial order on $S^{\fl}$. One can quickly check that $(u,s,v) \gamma (w,t,z) $ if and only if
\begin{enumerate}

\item $s = t$, $u \wedge w \neq 0$, and $v \wedge z \neq 0$, or

\item $s = t = 0$.

\end{enumerate}

So $S^{\fl}$ has a zero with $[u,s,v] = 0$ if and only if $s = 0$. We also have
\begin{align*}
    E(S^{\fl}) &= \{[u,ss^*,u]: u \in S/\D, ss^* \leq p_{u,u} \} \\
    & = \{[u,e,u]: u \in S/\D, e \in e_u^\downarrow\}.
\end{align*}

We leave the proof of the following proposition to the reader.

\begin{prop}\label{prop:flD} Let $S^{\fl}= IM(S,I,p)$ as described above and suppose that $[a,f,a]$ and $[b,g,b]$ are idempotents in $S^{\fl}$. Then $[a,f,a] \D [b,g,b]$ if and only if $f \D g$.
\end{prop}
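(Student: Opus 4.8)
The plan is to translate the $\D$-relation on the idempotents $[a,f,a]$ and $[b,g,b]$ into a statement about $S$ by extracting or exhibiting a suitable element of $S^{\fl}$ that realizes the equivalence. Recall that in any inverse semigroup two idempotents $x,y$ satisfy $x \D y$ if and only if there is an element $z$ with $z^*z = x$ and $zz^* = y$. So the first step is to record the inverse and the two range/source idempotents of a generic element $[u,s,v] \in S^{\fl}$. A direct computation gives $[u,s,v]^* = [v,s^*,u]$ and hence
\[
  [u,s,v]^*[u,s,v] = [v, s^* e_u s, v], \qquad [u,s,v][u,s,v]^* = [u, s e_v s^*, u].
\]
Because $[u,s,v]$ is a valid representative, we have $ss^* \le p_{u,u} = e_u$ and $s^*s \le p_{v,v} = e_v$, from which $e_u s = s$ and $s e_v = s$; substituting these yields the clean formulas $[u,s,v]^*[u,s,v] = [v, s^*s, v]$ and $[u,s,v][u,s,v]^* = [u, ss^*, u]$. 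This reduction is the one place genuine (if short) computation is needed, and it is the main technical obstacle in the argument; everything afterward is bookkeeping with the description of $\gamma$.

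For the forward direction, suppose $[a,f,a] \D [b,g,b]$ and choose $z = [u,s,v]$ with $z^*z = [a,f,a]$ and $zz^* = [b,g,b]$. By the formulas above this says $[v,s^*s,v] = [a,f,a]$ and $[u,ss^*,u] = [b,g,b]$. Applying the explicit description of $\gamma$ recorded before the proposition, equality of these $\gamma$-classes forces the middle entries to agree in both the nonzero and the zero cases, so $s^*s = f$ and $ss^* = g$. Then $s$ is an element of $S$ witnessing $f \D g$, as required.

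For the converse, suppose $f \D g$ in $S$ and pick $s \in S$ with $s^*s = f$ and $ss^* = g$. Since $[a,f,a]$ and $[b,g,b]$ are idempotents of $S^{\fl}$, the description of $E(S^{\fl})$ gives $f \le e_a$ and $g \le e_b$, equivalently $s^*s \le p_{a,a}$ and $ss^* \le p_{b,b}$. Hence $z := [b,s,a]$ is a valid element of $S^{\fl}$, and by the formulas from the first step $z^*z = [a,s^*s,a] = [a,f,a]$ and $zz^* = [b,ss^*,b] = [b,g,b]$, so $[a,f,a] \D [b,g,b]$. The degenerate case $f = 0$ is immediate, since then $[a,f,a] = 0$ and $0$ is $\D$-related only to itself in $S^{\fl}$, while $f \D g$ in $S$ forces $g = 0$.
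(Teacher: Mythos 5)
Your proof is correct. The paper leaves this proposition to the reader, and your argument is precisely the expected one: verify $[u,s,v]^* = [v,s^*,u]$, reduce $z^*z$ and $zz^*$ to $[v,s^*s,v]$ and $[u,ss^*,u]$ using $e_u s = s$ and $s e_v = s$, and then read off equality of middle entries from the explicit description of $\gamma$ (which forces $s^*s = f$, $ss^* = g$ in both the zero and nonzero cases), with the converse supplied by the witness $[b,s,a]$, whose admissibility follows from $f \le e_a = p_{a,a}$ and $g \le e_b = p_{b,b}$.
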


Finally, we describe the natural partial order.

\begin{prop}\label{prop:flineq} Let $S^{\fl}= IM(S,I,p)$ as described above and suppose $[u,s,v] \neq 0$. Then $[u, s, v] \leq [w, t, z]$ in $S^{\fl}$ if and only if $s \leq t, u \wedge w \neq 0,$ and $v \wedge z \neq 0$.
\end{prop}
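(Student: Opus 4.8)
The plan is to reduce the inequality in $S^{\fl}$ to the natural partial order in $S$ via the standard characterization $a \leq b \iff a = b\,a^*a$, valid in any inverse semigroup. First I would record the inverse and right idempotent of $[u,s,v]$. Since the triple satisfies $ss^* \leq p_{u,u} = e_u$ and $s^*s \leq p_{v,v} = e_v$, the absorption relations $e_u s = s$ and $s e_v = s$ hold, and a direct check of the defining products shows $[u,s,v]^* = [v,s^*,u]$. Consequently
\[
    [u,s,v]^*[u,s,v] = [v,\, s^* e_u s,\, v] = [v,\, s^*s,\, v].
\]

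With this in hand, $[u,s,v] \leq [w,t,z]$ holds if and only if $[u,s,v] = [w,t,z][v,s^*s,v]$. Using the multiplication rule of $IM(S,I,p)$ together with $p_{z,v} = e_{z \wedge v}$, I compute
\[
    [w,t,z][v,s^*s,v] = [w,\, t\, p_{z,v}\, s^*s,\, v] = [w,\, t\, e_{z \wedge v}\, s^*s,\, v].
\]
I then split on whether $z \wedge v = 0$. If $z \wedge v = 0$ then $e_{z \wedge v} = 0$ and the product is $0 \neq [u,s,v]$, so the inequality fails; this is precisely what forces the condition $v \wedge z \neq 0$. If instead $z \wedge v \neq 0$, then property (1) gives $e_{z \wedge v} = e_z e_v$, and since $t^*t \leq e_z$ and $s^*s \leq e_v$ we have $t e_z = t$ and $e_v s^* = s^*$, so the middle term collapses to
\[
    t\, e_z e_v\, s^* s = t\, s^* s.
\]

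It then remains to decide when $[w, ts^*s, v] = [u,s,v]$. Since $[u,s,v] \neq 0$ forces $s \neq 0$ and hence $u,v \neq 0$, the description of $\gamma$ shows this holds exactly when $ts^*s = s$ and $u \wedge w \neq 0$, the condition $v \wedge v \neq 0$ being automatic. Invoking the characterization of the natural partial order in $S$ once more, the equation $ts^*s = s$ is exactly $s \leq t$, and combining the cases yields the claimed equivalence.

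I expect the main obstacle to be the case analysis on $z \wedge v$, and in particular seeing that the hypothesis $v \wedge z \neq 0$ should not be assumed but rather emerges from requiring the product to be nonzero. The second delicate point is the simplification of $t\, e_{z \wedge v}\, s^*s$ to $t s^* s$, which hinges on combining the meet-semilattice identity $e_z e_v = e_{z \wedge v}$ of property (1) with the absorption relations coming from the defining constraints on the triples. Once these reductions are carried out, the passage to the natural partial order on $S$ is immediate.
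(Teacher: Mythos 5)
Your proof is correct and follows essentially the same route as the paper's: both reduce $[u,s,v] \leq [w,t,z]$ to the identity $a = b\,a^*a$, compute the resulting triple via the $IM(S,I,p)$ multiplication, use $p_{z,v} = e_{z\wedge v}$ together with the absorption relations $t e_z = t$, $e_v s^* = s^*$, $e_u s = s$ to collapse the middle entry to $t s^* s$, and invoke the $\gamma$-equivalence to extract $s \leq t$ and $u \wedge w \neq 0$, with $v \wedge z \neq 0$ forced by $[u,s,v] \neq 0$. The only cosmetic differences are that you simplify $[u,s,v]^*[u,s,v]$ to $[v,s^*s,v]$ before multiplying (the paper expands the full triple product) and that your case split on $z \wedge v$ handles both directions at once, where the paper proves the forward implication and leaves the converse as ``similar.''
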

\begin{proof}
Suppose $[u, s, v] \leq [w, t, z]$. Then 
\[
[u,s,v] = [w, t, z] [v, s^*, u] [u, s, v] = [w, t p_{z,v} s^* e_u s, v].
\]
So $ s = t p_{z,v} s^* e_u s$ and $u \wedge w \neq 0$. Also, since $[u,s,v] \neq 0$ we have $z \wedge v \neq 0$ and 
\[
    s = t p_{z,v} s^* e_u s = t e_z e_v s^* e_u s = t s^* s.
\]
The converse is similar.

\end{proof}

\section{Labelled Graph Inverse Semigroups}

In this section we consider inverse semigroups associated with labelled graphs. The semigroups we consider are essentially the same as the ones defined in Boava, de Castro, and de L. Mortari \cite{labelledgraphs}. One difference is that we use a different convention for the composition of paths. More significantly, we remove the requirement that a certain collection of vertices is closed under unions. This does not change the definition of the product in the inverse semigroup associated with a labelled graph space.

\begin{defn}Given a set $\A$, called an \textit{alphabet}, containing \textit{letters}, a \textit{labelled graph} $(\E,\mcL)$, over $\A$ consists of a directed graph $\E= (\E^0,\E^1, r,s)$ and a map $\mcL:\E^1  \to A$, called a \textit{labelling map}.
\end{defn}

Let $\A^*$ be the set of all finite words over $\A$ including the empty word, $\omega$, and extend the map $\mcL$ to a map $\mcL:\E^n \to \A^*$ defined by $\mcL(\lambda) = \mcL(\lambda_1)\ldots\mcL(\lambda_n)$ for $\lambda = \lambda_1\ldots \lambda_n \in \E^1$.

The elements of $\mcL^n := \mcL(\E^n)$ are called \textit{labelled paths of length $n$}. If $\alpha$ is a labelled path, a path on the graph $\lambda$ s.t. $\mcL(\lambda) = \alpha$ is called a \textit{representative} of $\alpha$, and conversely $\alpha$ is the \textit{label} of $\lambda$. Since any two representatives of $\alpha$ have the same length, one can define the \textit{length} of $\alpha$, denoted by $|\alpha|$, as the length of any one of its representatives. We also consider $\omega$ as a labelled path, with $|\omega| = 0$. The set $\mcL^{\geq 1} = \cup_{n \geq 1}\mcL^n$ is the set of all labelled paths of positive finite length. We also define $\mcL^* = \{\omega\} \cap \mcL^{\geq 1}$.

Given two labelled paths $\alpha,\beta$, we say that $\beta$ \textit{is a sublabel of} $\alpha$ if $\alpha = \beta\alpha'$ for some labelled path $\alpha'$. We say that $\alpha$ and $\beta$ are \textit{comparable} if $\alpha$ is a sublabel of $\beta$ or $\beta$ is a sublabel of $\alpha$.

For $\alpha \in \mcL^*$ and $A \in \P(\E^0)$, the \textit{relative source of $\alpha$ with respect to $A$}, denoted by $s(A,\alpha)$, is the set
\[s(A,\alpha) = \{s(\lambda) : \lambda \in \E^*, \mcL(\lambda) = \alpha, r(\lambda) \in A\}\]
if $\alpha \in \mcL^{\geq 1}$ and $s(A,\alpha) = A$ if $\alpha = \omega$. The \textit{source} of $\alpha$, denoted by $s(\alpha)$, is the set 
\[s(\alpha) = s(\E^0,\alpha)\]

For $\alpha \in \mcL^{\geq 1}$ we also define the \textit{range} of $\alpha$ as the set 
\[r(\alpha) = \{r(\alpha) \in \E^0 : \mcL(\lambda) = \alpha\}\]
Observe that, in particular, $s(\omega) = \E^0$, and if $\alpha \in \mcL^{\geq 1}$ then $s(\alpha) =\{s(\lambda) \in \E^0 : \mcL(\lambda) = \alpha\}$. The definitions above allow us to extend the $s,r$ maps to $s,r : \mcL^{\geq 1} \to \P(\E^0)$. Also, if $\alpha,\beta \in \mcL^*$ are such that $\beta\alpha \in \mcL^*$ then $s(s(A,\beta),\alpha) = s(A, \beta\alpha)$. Finally, for $A,B \in \P(\E^0)$ and $\alpha \in \mcL^*$, it holds that $s(A \cup B, \alpha) = s(A,\alpha) \cup s(B,\alpha)$.

Let $(\E,\mcL)$ be a labelled graph and $\B \subseteq \P(\E^0)$. We say that $\B$ is \textit{closed under relative sources} if $s(A,\alpha) \in \B$ for all $A \in \B$ and all $\alpha \in \mcL^*$.  If additionally $\B$ is closed under finite intersections and contains all $s(\alpha)$ for $\alpha \in \mcL^{\geq 1}$, we say that $(\E,\mcL,\B)$ is a \textit{labelled graph space}.

\begin{rem} This is where our definition differs from the one in \cite{labelledgraphs}. We do not assume that $\B$ is closed under unions in a labelled graph space.
\end{rem}

We say that a labelled graph space $(\E,\mcL,\B)$ is \textit{weakly right resolving} if for all $A,B \in \B$ and for all $\alpha \in \mcL^{\geq 1}$ we have $s(A \cap B, \alpha) = s(A,\alpha) \cap s(B,\alpha)$.

Observe that in a weakly right resolving labelled graph space, if $A,B \in \B$ are disjoint then for all $\alpha \in \mcL^*, s(A,\alpha)$ and $s(B,\alpha)$ are disjoint. For a given $\alpha \in \mcL^*$, let $\B_\alpha := \B \cap \P(s(\alpha))$.

We can now define the inverse semigroup of $(\E,\mcL,\B)$. Many of the proofs are omitted as they are nearly identical to the proofs in \cite{labelledgraphs}. Let $T$ be the set of all triples $(\alpha, A, \beta) \in \mcL^* \times \B \times \mcL^*$ for which $A \in \B_\alpha \cap \B_\beta$, together with an extra element $z$. A binary operation on $T$ is defined as follows: for all $t \in T$ define $zt = tz = z$, and for elements $(\alpha, A, \beta), (\gamma, B, \delta)$ in $\mcL^*\times \B\times \mcL^*$ put
\[(\alpha, A, \beta) (\gamma, B, \delta) = \begin{cases}
(\alpha\gamma', s(A,\gamma')\cap B , \delta) & \text{if }\gamma = \beta\gamma'\\
(\alpha,A \cap s(B, \beta') , \delta\beta')& \text{if }\beta = \gamma\beta'\\
z& \text{o.w.}
\end{cases}\]

Thus, nontrivial products occur only if $\beta$ and $\gamma$ are comparable. This product is well-defined: in the first case $A \in \B_\alpha$ implies $s(A,\gamma') \in \B_{\alpha\gamma'}$ which together with $B \in \B_\delta$ gives $s(A,\gamma') \cap B \in \B_{\alpha\gamma'} \cap \B_\delta$. In the second case $B \in \B_\delta$ implies $s(B,\beta') \in \B_{\delta\beta'}$ which together with $A \in \B_\alpha$ gives $A \cap s(B, \beta') \in \B_{\delta\beta'} \cap \B_\alpha$.

If $\beta = \gamma$ then $\gamma' = \beta' = \omega$ and, therefore, $s(A,\gamma') = A, s(B,\beta') = B$, and
\[(\alpha, A, \beta) (\beta, B, \delta) = (\alpha, A \cap B, \delta).\]

The proof of the following proposition is nearly identical to the proof of \cite[Proposition 3.2]{labelledgraphs}.
\begin{prop}
    Suppose that the labelled graph space $(\E,\mcL,\B)$ is weakly right resolving. Then the set $T$ together with the operation above is a semigroup with zero.
\end{prop}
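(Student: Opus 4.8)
The plan is to check that $z$ is a two-sided zero and that the product is associative. The zero axiom is immediate from the defining rule $zt = tz = z$, so the substance is associativity. I would fix three elements $t_i = (\alpha_i, A_i, \beta_i)$ for $i = 1,2,3$ and compare $(t_1 t_2)t_3$ with $t_1(t_2 t_3)$; whenever an intermediate or triple product equals $z$ the corresponding side collapses to $z$, so the first task is to show that both bracketings vanish under exactly the same conditions.

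The whole computation rests on three facts recorded earlier: the relative-source composition law $s(s(A,\beta),\alpha) = s(A,\beta\alpha)$, the weakly right resolving identity $s(A \cap B, \alpha) = s(A,\alpha) \cap s(B,\alpha)$, and the observation that a nonzero product is obtained only when the relevant inner labels are comparable. Since $t_1 t_2$ is nonzero only when $\beta_1$ and $\alpha_2$ are comparable, and likewise $t_2 t_3$ only when $\beta_2$ and $\alpha_3$ are comparable, the argument organizes into cases according to which of $\beta_1, \alpha_2$ is a sublabel of the other and which of $\beta_2, \alpha_3$ is a sublabel of the other. Writing the corresponding suffixes explicitly (for instance $\alpha_2 = \beta_1 \gamma'$ in one branch, $\beta_1 = \alpha_2 \beta'$ in the other) reduces each product to an explicit triple, and the first and third coordinates of the two bracketings agree immediately, since they are obtained by concatenating the same suffixes with the composition law governing how suffixes stack.

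Then I would match the middle coordinates. Each is an intersection of $A_1, A_2, A_3$ pushed forward along various labelled paths, and the weakly right resolving hypothesis is exactly what is needed to rewrite an expression such as $s(A_1 \cap s(A_2,\beta'), \eta)$ as $s(A_1,\eta) \cap s(A_2, \beta'\eta)$, bringing the set produced by one bracketing into the form produced by the other. In the same case analysis I would verify the consistency-of-vanishing claim: in the branches where $t_1 t_2$ and $t_2 t_3$ share a common prefix (for instance $\beta_1 = \alpha_2\beta'$ together with $\alpha_3 = \beta_2\gamma''$), the nonvanishing of each bracketing reduces to comparability of the two leftover suffixes $\beta'$ and $\gamma''$, so the two sides are simultaneously zero or nonzero.

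I expect the main obstacle to be precisely these overlapping branches: one must track several sublabel relations at once, split into the two sub-cases according to which of $\beta'$ and $\gamma''$ is a sublabel of the other, and reconcile the middle-coordinate intersections only after repeated use of both the composition law and the weakly right resolving identity. Since this mirrors the structure of the argument for \cite[Proposition 3.2]{labelledgraphs}, I would confirm that weak right resolvingness alone carries the verification through and that dropping the union-closure assumption noted in the remark above plays no role in it.
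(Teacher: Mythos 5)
Your plan is correct and takes essentially the same route as the paper, which omits the proof entirely and cites \cite[Proposition 3.2]{labelledgraphs}: that argument is exactly this case analysis on comparability of the inner labels, using the composition law $s(s(A,\beta),\alpha) = s(A,\beta\alpha)$ and the weakly right resolving identity to reconcile the middle coordinates of the two bracketings. Your identification of the overlapping branches as the crux --- where, e.g., $\beta_1 = \alpha_2\beta'$ and $\alpha_3 = \beta_2\gamma''$ make both bracketings vanish or survive together according to comparability of the leftover suffixes $\beta'$ and $\gamma''$ --- is exactly the delicate point of that proof.
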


Next we obtain a set $S$ from $T$ by collapsing $z$ and all elements in $T$ of the form $(\alpha, \emptyset,\beta)$ to a single element, $0$, and leaving all other elements in $T$ as they are. $S$ inherits the associative operation of $T$, meaning $S$ is a semigroup with $0$. We need to show that operation is well-defined with the zero class. Let $s = (\alpha,\emptyset,\beta)$ and $t = (\gamma,B,\delta)$.

First examine the product $st$ (which should be $0$ since $s = 0$). If $\beta$ and $\gamma$ are not comparable, then $st = z = 0$. If $\gamma  = \beta\gamma'$, then $st = (\alpha\gamma', s(\emptyset,\gamma')\cap B , \delta) = (\alpha\gamma', \emptyset\cap B , \delta) = (\alpha\gamma', \emptyset , \delta) = 0$. And if $\beta = \gamma\beta'$, then $st = (\alpha,\emptyset \cap s(B, \beta') , \delta\beta') =(\alpha,\emptyset , \delta\beta')  = 0$.

And the product $ts$. If $\delta$ and $\alpha$ are not comparable, then $ts = z = 0$. If $\delta = \alpha\delta'$, then $ts = (\gamma,B \cap s(\emptyset, \delta'),\beta\delta') = (\gamma,B \cap \emptyset,\beta\delta') = (\gamma,\emptyset,\beta\delta') = 0$. And if $\alpha = \delta\alpha'$, then $ts = (\gamma\alpha', s(B,\alpha' )\cap \emptyset ,\beta) = (\gamma\alpha', \emptyset ,\beta) = 0$. Thus the operation on $S$ is well-defined with the zero class. See \cite[Proposition 3.4]{labelledgraphs} for the proof of the following proposition.
\begin{prop}
    Suppose the labelled graph space $(\E,\mcL,\B)$ is weakly right resolving. Then $S$ is an inverse semigroup with $0$.
\end{prop}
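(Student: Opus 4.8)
The plan is to invoke the standard characterization that a semigroup with zero is an inverse semigroup precisely when it is regular and its idempotents commute. Since $S$ has already been shown to inherit a well-defined associative product with a zero element from $T$, the task reduces to verifying these two properties directly on triples, keeping track of the collapsed zero class throughout.

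First I would prove regularity by producing an explicit inverse: for a nonzero element $(\alpha, A, \beta)$ of $S$ I claim $(\beta, A, \alpha)$ is an inverse. Using the multiplication in the case where the inner labels agree (so that $\gamma' = \beta' = \omega$ and $s(A,\omega) = A$), one computes $(\alpha, A, \beta)(\beta, A, \alpha) = (\alpha, A, \alpha)$ and then $(\alpha, A, \alpha)(\alpha, A, \beta) = (\alpha, A, \beta)$, which gives $(\alpha, A, \beta)(\beta, A, \alpha)(\alpha, A, \beta) = (\alpha, A, \beta)$; the symmetric computation yields $(\beta, A, \alpha)(\alpha, A, \beta)(\beta, A, \alpha) = (\beta, A, \alpha)$. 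Because $A \neq \emptyset$, none of these intermediate products has empty middle coordinate, so the identities that hold in $T$ descend to $S$; together with the fact that $0$ is its own inverse, this shows $S$ is regular.

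Next I would pin down the idempotents. Expanding $(\alpha, A, \beta)(\alpha, A, \beta)$ in each of the three multiplication cases shows the product can equal $(\alpha, A, \beta)$ only when $\alpha = \beta$, so the nonzero idempotents of $S$ are exactly the classes of $(\alpha, A, \alpha)$ with $\emptyset \neq A \in \B_\alpha$, together with $0$. To see that idempotents commute, take $e = (\alpha, A, \alpha)$ and $f = (\beta, B, \beta)$. If $\alpha$ and $\beta$ are incomparable then $ef = fe = 0$. If $\beta = \alpha\gamma'$, the two products are $ef = (\beta, s(A,\gamma') \cap B, \beta)$ and $fe = (\beta, B \cap s(A,\gamma'), \beta)$, which agree by commutativity of intersection; the case $\alpha = \beta\beta'$ is symmetric and $\alpha = \beta$ is immediate. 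Hence the idempotents form a commutative set, and combined with regularity this proves $S$ is an inverse semigroup with $0$.

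The computations are routine; the main point requiring care is the interaction with the quotient, namely checking at each step that any product whose middle coordinate becomes $\emptyset$ is correctly read as $0$ in $S$, so that the inverse and commutativity identities verified at the level of triples in $T$ genuinely persist after collapsing. I note that the weak right resolving hypothesis appears to enter only through the associativity of $T$ established in the previous proposition; the regularity and commuting-idempotent arguments above use only the commutativity of intersection and the identity $s(A, \omega) = A$.
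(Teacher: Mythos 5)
Your proof is correct, and it follows essentially the same route as the proof the paper relies on: the paper omits the argument and cites \cite[Proposition 3.4]{labelledgraphs}, which likewise exhibits $(\beta, A, \alpha)$ as an inverse of $(\alpha, A, \beta)$, identifies the nonzero idempotents as the triples $(\alpha, A, \alpha)$, and concludes via the standard characterization that a regular semigroup with commuting idempotents is inverse. Your attention to the quotient by the zero class and your observation that weak right resolving enters only through associativity are both accurate and consistent with the paper's setup.
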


Finally, we state without proof the following proposition that describes the basic structure of the inverse semigroup of a labelled graph.
\begin{prop} Let $S$ be the inverse semigroup associated with the labelled graph space $(\E, \mcL, \B)$.
\begin{enumerate}

\item $E(S) = \{ (\alpha, A, \alpha): \alpha \in \mcL^*, A \in B_{\alpha}\} \cup \{0\}$.
\item $(\alpha, A, \beta) \leq (\gamma, B, \delta)$ if and only if there exists $\mu \in \mcL^*$ such that $\beta = \delta \mu, \alpha = \gamma \mu,$ and $A \subseteq s(B,\mu)$.
\item $(\alpha, A, \beta) \R (\gamma, B, \delta)$ if and only if $\alpha = \gamma$ and $A = B$.
\item $(\alpha, A, \beta) \mcL (\gamma, B, \delta)$ if and only if $\beta = \delta$ and $A = B$.
\item $S$ is combinatorial and $0$-E-unitary.

\end{enumerate}
\end{prop}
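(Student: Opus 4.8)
The plan is to reduce all five claims to two computations, after which everything follows from the standard identities of inverse semigroup theory. First I would record that the involution is $(\alpha, A, \beta)^* = (\beta, A, \alpha)$: a direct check using the product shows $(\alpha,A,\beta)(\beta,A,\alpha)(\alpha,A,\beta) = (\alpha,A,\beta)$ and symmetrically, so this is the inverse. The two computations I need are $ss^*$ and $s^*s$ for $s = (\alpha,A,\beta)$. In each the relevant inner coordinates agree ($\beta = \beta$ and $\alpha = \alpha$ respectively), so the product falls into the equal-coordinate case $(\alpha,A,\beta)(\beta,B,\delta) = (\alpha, A\cap B, \delta)$, giving $ss^* = (\alpha, A, \alpha)$ and $s^*s = (\beta, A, \beta)$. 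Everything below is bookkeeping on top of these.

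For (1), I would characterize the idempotents directly: if $(\alpha,A,\beta)$ is idempotent, then squaring it lands in one of the two comparability cases of the product, and in each case matching the outer coordinates of the square against $(\alpha,A,\beta)$ forces the connecting word to be $\omega$, hence $\alpha = \beta$. Conversely every $(\alpha,A,\alpha)$ with $A \in \B_\alpha$ is visibly idempotent via the equal-coordinate case, so $E(S) = \{(\alpha,A,\alpha)\} \cup \{0\}$. Claims (3) and (4) are then immediate from the computed forms: $s \mcR t$ iff $ss^* = tt^*$, which reads $(\alpha,A,\alpha) = (\gamma,B,\gamma)$, i.e. $\alpha=\gamma$ and $A=B$; dually $s \mcL t$ iff $s^*s = t^*t$, i.e. $\beta = \delta$ and $A = B$.

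The one claim requiring real care is (2). I would use $s \le t$ iff $s = t(s^*s)$ and compute $t(s^*s) = (\gamma,B,\delta)(\beta,A,\beta)$. This splits according to how $\delta$ and $\beta$ compare: the case $\delta = \beta\mu'$ forces, by matching third coordinates, $\mu' = \omega$, and is therefore subsumed by the case $\beta = \delta\mu$, which yields $(\gamma\mu,\, s(B,\mu)\cap A,\, \beta)$. Equating this with $s = (\alpha,A,\beta)$ gives exactly $\alpha = \gamma\mu$, $\beta = \delta\mu$, and $s(B,\mu)\cap A = A$, i.e. $A \subseteq s(B,\mu)$. For the converse I would multiply out directly: under these three conditions $s(B,\mu)\cap A = A$ collapses the product back to $(\alpha,A,\beta)$. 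The main point to get right here is that the relative-source bookkeeping, in particular $s(B,\omega) = B$ and the behaviour of $s(\cdot,\mu)$, is precisely what converts the set-intersection appearing in the product into the containment $A \subseteq s(B,\mu)$.

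Finally, for (5): $S$ is combinatorial because $\mcH$ is the intersection of $\mcR$ and $\mcL$, and by (3) and (4) the simultaneous conditions $\alpha=\gamma$, $\beta=\delta$, $A=B$ force $s = t$, so $\mcH$ is trivial. For $0$-E-unitarity I would take a nonzero idempotent $e = (\alpha,A,\alpha) \le s = (\gamma,B,\delta)$ and apply (2): there is $\mu$ with $\alpha = \gamma\mu$ and $\alpha = \delta\mu$. Since labelled paths are words in the free monoid $\A^*$, concatenation is right-cancellative, so $\gamma\mu = \delta\mu$ gives $\gamma = \delta$, whence $s = (\gamma,B,\gamma)$ is idempotent. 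This right-cancellativity of path concatenation is the only genuinely non-formal input, and it is the step I would be most careful to flag; the remainder is mechanical manipulation of the product formula.
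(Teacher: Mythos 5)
Your proposal is correct in all its steps: the involution formula $(\alpha,A,\beta)^* = (\beta,A,\alpha)$, the computations $ss^* = (\alpha,A,\alpha)$ and $s^*s = (\beta,A,\beta)$, the reduction of the natural partial order to $s = t(s^*s)$, and the use of right-cancellativity of concatenation in $\A^*$ for $0$-E-unitarity all check out, and they suffice for the five claims. Note that the paper itself states this proposition without proof, deferring to the analogous results of Boava, de Castro, and de L.~Mortari, so there is no in-paper argument to compare against; your write-up is a valid self-contained verification along the standard lines, and it correctly identifies the only two non-mechanical points (the conversion of the intersection $s(B,\mu)\cap A = A$ into the containment $A \subseteq s(B,\mu)$, and the cancellation $\gamma\mu = \delta\mu \Rightarrow \gamma = \delta$ in the free monoid).
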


\section{Strongly Right Resolving Labelled Graphs}
 \begin{defn} Let $(\E,\mcL)$ be a labelled graph. We say $(\E,\mcL)$ is \textit{strongly right resolving} if for all $e,f \in \E^1$, $\mcL(e) = \mcL(f)$ implies $r(e) = r(f)$.
\end{defn}

\begin{prop} Suppose $\E = (\E^0,\E^1,r,s)$ is a directed graph, $\mcL : \E^1 \to A$ is a labelling map, and $(\E,\mcL)$ is strongly right resolving. Further suppose that $\B \subseteq \P(\E^0)$ contains the empty set, is closed under finite intersections, and contain all $s(\alpha)$ for $\alpha \in \mcL^{\geq 1}$, then $(\E,\mcL,\B)$ is a weakly right resolving labelled graph space.
\end{prop}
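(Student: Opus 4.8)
The plan is to derive everything from a single structural consequence of the strongly right resolving hypothesis: that the range set $r(\alpha)$ is a \emph{singleton} for every $\alpha \in \L^{\geq 1}$. Granting this, the two things that need checking — that $\B$ is closed under relative sources (so that $(\E,\L,\B)$ is genuinely a labelled graph space, the remaining two axioms being assumed) and that the weakly right resolving identity holds — both reduce to routine bookkeeping.

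First I would establish the singleton property. Fix $\alpha = \alpha_1\cdots\alpha_n \in \L^{\geq 1}$ and let $\lambda = \lambda_1\cdots\lambda_n$ be any representative, so that $\L(\lambda_i) = \alpha_i$. Because the label of a labelled path records the labels of the successive edges of any representative, the edge of $\lambda$ incident to the range vertex of $\lambda$ (under the paper's composition convention, its initial edge $\lambda_1$) always carries one fixed letter of $\alpha$ determined by $\alpha$ alone. Since $(\E,\L)$ is strongly right resolving, any two edges carrying that label share a common range, so $r(\lambda)$ does not depend on the choice of representative. As $\alpha \in \L^{\geq 1}$ admits at least one representative, $r(\alpha)$ is a single vertex.

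From this I would read off an explicit formula. For $\alpha \in \L^{\geq 1}$ and $A \in \B$, either the unique range vertex lies in $A$, in which case every representative of $\alpha$ contributes its source and $s(A,\alpha) = s(\alpha)$, or it does not, in which case no representative qualifies and $s(A,\alpha) = \emptyset$; that is,
\[
    s(A,\alpha) = \begin{cases} s(\alpha) & \text{if } r(\alpha) \subseteq A,\\ \emptyset & \text{otherwise.}\end{cases}
\]
Since $s(\alpha) \in \B$ and $\emptyset \in \B$ by hypothesis, and since $s(A,\omega) = A \in \B$, this shows $\B$ is closed under relative sources. Combined with the assumed closure under finite intersections and the assumption that $\B$ contains every $s(\alpha)$, this gives that $(\E,\L,\B)$ is a labelled graph space.

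Finally I would verify the identity $s(A\cap B,\alpha) = s(A,\alpha)\cap s(B,\alpha)$ for $A,B \in \B$ and $\alpha \in \L^{\geq 1}$ directly from the formula above. The observation is simply that $r(\alpha) \subseteq A\cap B$ if and only if $r(\alpha) \subseteq A$ and $r(\alpha) \subseteq B$; checking the three cases (both containments hold, exactly one holds, neither holds) shows that both sides equal $s(\alpha)$ in the first case and $\emptyset$ otherwise. The only delicate point in the whole argument is the first step — extracting the singleton range property — since it hinges on correctly identifying which extreme letter of $\alpha$ controls the range end of a representative under the composition convention in force; once that is pinned down, the formula and both conclusions are immediate.
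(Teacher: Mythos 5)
Your proposal is correct and takes essentially the same approach as the paper: both hinge on the observation that strong right resolving forces every representative of $\alpha \in \L^{\geq 1}$ to have the same range (you justify this via the initial edge $\lambda_1$, correctly identifying it as the edge controlling the range under the paper's composition convention, where the paper simply asserts it), and both then split into the cases $r(\alpha) \in A$ versus $r(\alpha) \notin A$ to obtain closure under relative sources and the weakly right resolving identity. No gaps.
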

\begin{proof}
First, we show that $\B$ is closed under relative sources. Let $A \in \B, \alpha \in \mcL^*$. If $\alpha = \omega$, then $s(A,\omega) = A \in \B$, so assume otherwise. Then since $(\E, \mcL)$ is strongly right resolving, the range of all representatives of $\alpha$ must be the same, let $a \in \E^0$ be this range. If $a \notin A$, then clearly $s(A,\alpha) = \emptyset$. If $a \in A$, then $s(A,\alpha) = s(\alpha) \in A$, since the ranges of all representatives of $\alpha$ are in $A$. Thus $(\E,\mcL,\B)$ is a labelled graph space. 

Next we show that $(\E,\mcL,\B)$ is weakly right resolving. Let $A,B \in \B, \alpha \in \mcL^*$. If $\alpha = \omega$, then clearly $s(A\cap B, \omega) = A\cap B = s(A,\omega)\cap s(B,\omega)$, so suppose otherwise. Again, let $a \in \E^0$ be the range of all the representatives of $\alpha$. If $a \in A\cap B$, then $s(A \cap B,\alpha) = s(\alpha) = s(\alpha) \cap s(\alpha) = s(A,\alpha) \cap s(B,\alpha)$ and we are done. Suppose $a \notin A \cap B$. Without loss of generality, suppose $a \notin A$, then $s(A \cap B,\alpha) = \emptyset = \emptyset \cap s(B,\alpha) =  s(A,\alpha) \cap s(B,\alpha)$. Thus we are done.
\end{proof}

If $S$ is the labelled graph inverse semigroup of the labelled graph space $(\E,\mcL,\B)$ where $(\E,\mcL)$ is strongly right resolving, then $r(\alpha)$ is a singleton for all $\alpha \in \mcL^{\geq 1}$. We will use $r(\alpha)$ to refer to the element and not the set containing one element moving forward.

\begin{prop}
    Let $(\E,\mcL,\B)$ be a labelled graph space where $(\E,\mcL)$ is strongly right resolving. For $a \in \mcL^{1}$ and $A \in \B$,
    \[s(A,a) = \begin{cases}
        s(a) & \text{if }r(a) \in A,\\
        \emptyset & \text{if }r(a) \notin A.
    \end{cases}\]
\end{prop}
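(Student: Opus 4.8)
The plan is to unfold the definition of the relative source and then invoke the single-range property for length-one labelled paths that was recorded just before the statement. For $a \in \L^1$ the representatives of $a$ are exactly the edges $e \in \E^1$ with $\L(e) = a$, so by the definition of $s(A,a)$ I would start from
\[
    s(A,a) = \{ s(e) : e \in \E^1,\ \L(e) = a,\ r(e) \in A \}.
\]
The key observation is that strong right resolving forces $r(e) = r(a)$ for every representative $e$ of $a$; hence the membership condition $r(e) \in A$ is independent of the chosen representative, holding either for all of them or for none of them.

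From here the argument splits into two cases according to whether $r(a) \in A$. First I would treat the case $r(a) \in A$: then $r(e) \in A$ is automatically satisfied for every representative $e$, so the indexing set reduces to $\{ s(e) : e \in \E^1,\ \L(e) = a \}$, which is exactly $s(\E^0, a) = s(a)$. In the remaining case $r(a) \notin A$, no representative $e$ can satisfy $r(e) \in A$, so the indexing set is empty and $s(A,a) = \emptyset$. Combining the two cases gives the claimed piecewise formula.

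I expect no genuine obstacle here, since the whole statement is a direct consequence of the fact that all representatives of a single letter share one range. The only point requiring mild care is matching the reduced set in the first case against the official definition $s(a) = s(\E^0,a)$ and noting that the side condition $r(e) \in \E^0$ is vacuous; this is precisely the length-one specialization of the closure-under-relative-sources computation already carried out in the preceding proposition, so I would lean on that parallel for confidence rather than redo it from scratch.
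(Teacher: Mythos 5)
Your proof is correct and follows essentially the same route as the paper: unfold the definition of $s(A,a)$, use strong right resolving to force $r(e) = r(a)$ for every representative $e$, and split into the two cases $r(a) \in A$ and $r(a) \notin A$. Nothing is missing.
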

\begin{proof}
    Suppose $r(\alpha) \in A$, then
    \[s(A,a) = \{s(e) : e \in \E^*, \mcL(e) = a, r(e) \in A\}\]
    Since $(\E, \mcL)$ is strongly right resolving, $r(e) = r(a) \in A$ and 
    \[s(A,a) = \{s(e) : e \in \E^*, \mcL(e) = a\} = s(a).\]
    On the other hand, if $r(a) \notin A$, then no representative of $a$ has a range in $A$, so $s(A,a) = \emptyset$.
\end{proof}

\begin{prop}
     Let $(\E,\mcL,\B)$ be a labelled graph space where $(\E,\mcL)$ is strongly right resolving. If $\alpha = a_1 \ldots a_n \in \mcL^{\geq 1}$, then $r(a_{i+1}) \in s(a_{i})$ for $i = 1,\ldots, n-1$.
\end{prop}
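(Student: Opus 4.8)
The plan is to reduce the statement to the defining property of a representative path together with the strongly right resolving hypothesis; no estimate or recursion is needed, only a careful unwinding of definitions. Since $\alpha = a_1 \ldots a_n \in \L^{\geq 1}$, by definition there is a representative $\lambda = \lambda_1 \ldots \lambda_n \in \E^n$ with $\L(\lambda) = \alpha$, so that $\L(\lambda_i) = a_i$ for each $i$ and each $\lambda_i \in \E^1$ is a single edge. I would fix such a $\lambda$ at the outset and argue pointwise in $i$.

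The key step is to read off the range--source incidences forced by the convention for composing paths. In the convention used here the range of a composite path is the range of its first edge and its source is the source of its last edge, so consecutive edges of $\lambda$ satisfy $s(\lambda_i) = r(\lambda_{i+1})$ for $i = 1, \ldots, n-1$. One can pin this convention down from the identity $s(s(A,\beta),\alpha) = s(A,\beta\alpha)$ recorded above: restricting ranges by $A$, traversing $\beta$, and then $\alpha$ agrees with traversing $\beta\alpha$, which forces $\beta$ to sit on the range side and $\alpha$ on the source side, i.e. the interior source of the $\beta$-part equals the range of the $\alpha$-part. Once this incidence is in hand, two one-line observations finish the proof for each $i$. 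First, $\lambda_i$ is an edge with $\L(\lambda_i) = a_i$, so $s(\lambda_i) \in \{ s(e) : \L(e) = a_i \} = s(a_i)$. Second, since $(\E,\L)$ is strongly right resolving and $\L(\lambda_{i+1}) = a_{i+1}$, the range $r(\lambda_{i+1})$ is the single element $r(a_{i+1})$. Combining, $r(a_{i+1}) = r(\lambda_{i+1}) = s(\lambda_i) \in s(a_i)$, as required.

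I expect the only genuine obstacle to be bookkeeping about the composition convention, rather than anything substantive. Because the paper deliberately uses a composition convention opposite to that of \cite{labelledgraphs}, the orientation of the incidence $s(\lambda_i) = r(\lambda_{i+1})$ (as opposed to $r(\lambda_i) = s(\lambda_{i+1})$) must be stated in agreement with the definitions of $s(\alpha)$ and $r(\alpha)$ already in use; getting this orientation wrong would produce the reversed and false claim $r(a_i) \in s(a_{i+1})$. Beyond fixing this convention consistently, the argument is a direct substitution and requires no further work.
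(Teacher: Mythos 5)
Your proof is correct, but it takes a genuinely different route from the paper's. The paper argues by contradiction at the level of relative sources: since $\alpha$ has a representative, $s(\alpha) \neq \emptyset$; then, using the identity $s(s(A,\beta),\gamma) = s(A,\beta\gamma)$ to write $s(\alpha) = s(s(s(s(a_1\ldots a_{i-1}),a_i),a_{i+1}),a_{i+2}\ldots a_n)$ and invoking the preceding proposition (for a single letter $a$, $s(A,a) = \emptyset$ whenever $r(a) \notin A$), the assumption $r(a_{i+1}) \notin s(a_i)$ forces $s(\alpha) = \emptyset$, a contradiction. You instead argue directly and pointwise: fix a representative $\lambda = \lambda_1\cdots\lambda_n$, extract the incidence $s(\lambda_i) = r(\lambda_{i+1})$ imposed by the composition convention, note $s(\lambda_i) \in s(a_i)$, and use the strongly right resolving hypothesis to identify $r(\lambda_{i+1})$ with the single vertex $r(a_{i+1})$. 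Your argument is more elementary---it bypasses the relative-source calculus except as a means of pinning down the composition convention---and it makes the role of strong right resolving completely transparent. What the paper's version buys is that it stays entirely within the $s(A,\alpha)$ formalism already developed in the section and never needs the edge-incidence convention to be made explicit; indeed the paper nowhere states that convention, so your inference of $s(\lambda_i) = r(\lambda_{i+1})$ from the identity $s(s(A,\beta),\alpha) = s(A,\beta\alpha)$ is a necessary (and correctly executed) step of your approach, and your caution that the reversed orientation would yield the false claim $r(a_i) \in s(a_{i+1})$ is exactly the right point to flag. Both proofs ultimately rest on the same two inputs: the existence of a representative of $\alpha$ and the strongly right resolving hypothesis.
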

\begin{proof}
    Since $\alpha \in \mcL^{\geq 1}$, there exists a representative $\lambda \in \E^*$, so $s(\alpha) \not = \emptyset$. Let $i =1,\ldots, n-1$ be arbitrary, then
\begin{align*}
    s(\alpha) &= s(a_1\ldots a_n) \\
            &= s(s(a_1\ldots a_{i-1}), a_{i},\ldots a_n) \\
            &= s(s(s(a_1\ldots a_{i-1}), a_{i}), a_{i+1}\ldots a_n) \\
            &= s(s(s(s(a_1\ldots a_{i-1}), a_{i}), a_{i+1}), a_{i+2}\ldots a_n)
\end{align*}
    Thus, if $r(a_{i+1}) \notin s(a_{i})$, then $s(\alpha) = \emptyset$ which we know is false, so $r(a_{i+1}) \in s(a_{i})$.
\end{proof}

\begin{prop}\label{prop:relsource} Let $S$ be a labelled graph inverse semigroup coming from the labelled graph space $(\E,\mcL,\B)$ where $(\E,\mcL)$ is strongly right resolving. For $\alpha \in \mcL^{1}, A \in \B$,
    \[s(A,\alpha) = \begin{cases}
        s(\alpha) & \text{if }r(\alpha) \in A\\
        \emptyset & \text{if }r(\alpha) \notin A
    \end{cases}\]
    and moreover, if $\alpha = a_1\ldots a_n$, then
    \[s(A,\alpha) = \begin{cases}
        s(a_n) & \text{if }r(a_1) \in A\\
        \emptyset & \text{if }r(a_1) \notin A
    \end{cases}\]
\end{prop}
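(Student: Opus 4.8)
The plan is to handle the two displayed formulas separately, obtaining the first from the defining property of strong right resolvingness and the second from the first together with the identifications $r(\alpha)=r(a_1)$ and $s(\alpha)=s(a_n)$. For the first formula I would argue exactly as in the length-one proposition above, but for a general $\alpha\in\L^{\geq 1}$. Since $(\E,\L)$ is strongly right resolving, every representative $\lambda$ of $\alpha$ has the same range, namely the singleton $r(\alpha)$. Thus in the defining set
\[
s(A,\alpha)=\{s(\lambda):\lambda\in\E^*,\ \L(\lambda)=\alpha,\ r(\lambda)\in A\},
\]
the constraint $r(\lambda)\in A$ is either met by every representative (when $r(\alpha)\in A$), giving $s(A,\alpha)=\{s(\lambda):\L(\lambda)=\alpha\}=s(\alpha)$, or by none (when $r(\alpha)\notin A$), giving $s(A,\alpha)=\emptyset$.

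For the second formula I would first record that $r(\alpha)=r(a_1)$ and $s(\alpha)=s(a_n)$, after which the conclusion is immediate from the first formula by substituting these identifications into both the hypothesis and the output. The equality $r(\alpha)=r(a_1)$ is forced by the path convention together with strong right resolvingness: the range of any representative $\lambda=\lambda_1\cdots\lambda_n$ of $\alpha$ equals $r(\lambda_1)=r(a_1)$. The inclusion $s(\alpha)\subseteq s(a_n)$ is equally direct, since the source of such a $\lambda$ equals $s(\lambda_n)$, the source of a representative of $a_n$.

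The one step demanding real work, and the main obstacle, is the reverse inclusion $s(a_n)\subseteq s(\alpha)$: given an arbitrary representative $e$ of $a_n$, I must extend it to a full representative $\lambda_1\cdots\lambda_n$ of $\alpha$ with $\lambda_n=e$. I would build this path backward. Strong right resolvingness pins $r(e)=r(a_n)$, and the preceding proposition gives $r(a_n)\in s(a_{n-1})$, so there is a representative $\lambda_{n-1}$ of $a_{n-1}$ with $s(\lambda_{n-1})=r(a_n)=r(e)$; hence $\lambda_{n-1}e$ composes into a legitimate path. Iterating, using $r(a_{i+1})\in s(a_i)$ at each stage to produce $\lambda_i$ with $s(\lambda_i)=r(a_{i+1})=r(\lambda_{i+1})$, yields a representative of $\alpha$ ending in $e$, so that $s(e)\in s(\alpha)$. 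This establishes $s(\alpha)=s(a_n)$ and completes the second formula.

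Alternatively, one could obtain the second formula directly by induction on $n$ using the composition identity $s(s(A,\beta),\alpha)=s(A,\beta\alpha)$: writing $\alpha=a_1\beta$ with $\beta=a_2\cdots a_n$, the length-one proposition reduces $s(A,\alpha)$ to $s(s(A,a_1),\beta)$, which is $\emptyset$ when $r(a_1)\notin A$ and $s(s(a_1),\beta)$ otherwise; since $s(a_1)\in\B$ (as $\B$ contains all $s(\gamma)$ with $\gamma\in\L^{\geq 1}$) and $r(a_2)\in s(a_1)$, the inductive hypothesis applied to $\beta$ returns $s(a_n)$. Either route depends on the same essential ingredient, the preceding proposition $r(a_{i+1})\in s(a_i)$, which is precisely what guarantees that the relevant representatives genuinely assemble into a single path.
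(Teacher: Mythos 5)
Your proposal is correct and takes essentially the same approach as the paper: the paper's entire proof is the one-line remark that the result ``follows as a corollary from the previous two propositions,'' and your argument supplies exactly that derivation, using the length-one relative-source formula together with $r(a_{i+1}) \in s(a_i)$ (your backward path-extension establishing $s(\alpha) = s(a_n)$, or equivalently your inductive route via $s(s(A,\beta),\gamma) = s(A,\beta\gamma)$, is precisely the detail the paper leaves implicit).
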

\begin{proof}
    This follows as a corollary from the previous two propositions.
\end{proof}

\begin{prop}
    Let $S$ be a labelled graph inverse semigroup coming from the labelled graph space $(\E,\mcL,\B)$ where $(\E,\mcL)$ is strongly right resolving. Then for $(\alpha,A,\beta),(\gamma, B, \delta)\in S $ we have $(\alpha,A,\beta)(\gamma, B, \delta)=$
	\[
		\begin{cases}
        (\alpha\gamma', s(\gamma')\cap B , \delta) & \text{if }\gamma = \beta\gamma', \gamma' \not =\omega, r(\gamma') \in A, \text{and }s(\gamma')\cap B \not= \emptyset,\\
        (\alpha,A \cap s(\beta') , \delta\beta')& \text{if }\beta = \gamma\beta', \beta' \not = \omega, r(\beta') \in B, \text{and }A \cap s(\beta') \not= \emptyset,\\
        (\alpha, A\cap B, \delta)&  \text{if }\gamma = \beta \text{ and } A \cap B \not= \emptyset, \text{and}\\
        0& \text{otherwise}
    \end{cases}\]
\end{prop}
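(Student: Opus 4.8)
The plan is to reduce the general product formula for a labelled graph inverse semigroup to the stated form by substituting the simplified expression for relative sources afforded by Proposition \ref{prop:relsource}, and then tracking which triples collapse to $0$ in $S$. Recall that on the semigroup $T$ the product $(\alpha, A, \beta)(\gamma, B, \delta)$ equals $(\alpha\gamma', s(A,\gamma') \cap B, \delta)$ when $\gamma = \beta\gamma'$, equals $(\alpha, A \cap s(B, \beta'), \delta\beta')$ when $\beta = \gamma\beta'$, and equals $z$ otherwise; and that in passing from $T$ to $S$ both $z$ and every triple with empty middle coordinate are identified with $0$. Thus the entire argument is a case analysis on the comparability of $\beta$ and $\gamma$ followed by an application of Proposition \ref{prop:relsource}.

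First I would dispose of the incomparable case: if $\beta$ and $\gamma$ are not comparable, the product is $z$, which is $0$ in $S$, and this falls under the final \emph{otherwise} clause. Next, suppose $\gamma = \beta\gamma'$ with $\gamma' \neq \omega$. Then the product is $(\alpha\gamma', s(A,\gamma') \cap B, \delta)$, and Proposition \ref{prop:relsource} gives $s(A,\gamma') = s(\gamma')$ if $r(\gamma') \in A$ and $s(A,\gamma') = \emptyset$ otherwise. In the first subcase the product is $(\alpha\gamma', s(\gamma') \cap B, \delta)$, which is the first listed outcome when $s(\gamma') \cap B \neq \emptyset$ and is $0$ otherwise; in the second subcase the middle coordinate is empty, so the product is $0$. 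The symmetric computation for $\beta = \gamma\beta'$ with $\beta' \neq \omega$, using the second branch of the product together with the expression for $s(B, \beta')$, yields the second listed outcome.

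Finally I would handle the equality $\gamma = \beta$ separately. Here $\gamma' = \beta' = \omega$, so both branches of the original formula apply and each reduces, using $s(A, \omega) = A$ and $s(B, \omega) = B$, to $(\alpha, A \cap B, \delta)$; this is nonzero precisely when $A \cap B \neq \emptyset$, giving the third listed outcome, and is $0$ otherwise. Collecting the cases reproduces the four-way formula. There is no genuine obstacle here beyond careful bookkeeping: the only points demanding attention are that the relative source collapses to $\emptyset$ exactly when $r(\cdot)$ fails to lie in the relevant vertex set, which is precisely the content of Proposition \ref{prop:relsource}, and that every triple acquiring an empty middle coordinate must be read as $0$ in $S$ rather than as a nonzero element of $T$.
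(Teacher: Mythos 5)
Your proposal is correct and is exactly the argument the paper intends: the paper's proof consists of the single remark that the formula ``follows from the previous proposition and the definition of multiplication,'' and your case analysis on the comparability of $\beta$ and $\gamma$, with Proposition \ref{prop:relsource} substituted for the relative sources and empty-middle-coordinate triples identified with $0$, is precisely that argument written out in full.
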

\begin{proof}
    This follows from the previous proposition and the definition of multiplication.
\end{proof}

\section{The Labelled Graph of an Inverse Semigroup}

In this section we construct a labelled graph from an inverse semigroup satisfying some key properties. Ultimately we seek to generalize the construction in \cite{REU2022} of a directed graph from a combinatorial inverse semigroup $S$ with $0$.

For idempotents $e,f$ we write $e \ll f$ to denote that $e$ is \emph{immediately below} $f$. That is, $e < f$ and there does not exist $g$ such that $e < g < f$. We say that an inverse semigroup \emph{has finite intervals} if for idempotents $e,f$ with $0 \neq e \leq f$ there exists idempotents $h_i$ for $1 \leq i \leq n$ such that:
\[
	0 \neq e = h_1 \ll h_2 \ll \dots \ll h_n = f
\]

We will use a set of idempotent $\D$-class representatives to construct our labelled graph. Recall that we denote the $\mathcal{D}$-class of $s$ by $[s]$.

\begin{defn}\label{def:coherent} A complete set $\C = \{e_v : v \in S/\D\}$ of idempotent representatives of the $\D$-classes of $S$ is a \emph{coherent set for $S$} if 
\begin{enumerate}

\item $e_u e_v \in \C$ for all $u,v \in \C$, 

\item $0 \neq e_u \leq f \leq e_v$ implies $f \in \C$ for all idempotents $f$, and

\item if $f,g \ll e_v$ with $f$ and $g$ incomparable, then $fg \in \C$.
\end{enumerate}
\end{defn}
Note that we will frequently make use of the observation that $f \in \C$ if and only if $f = e_{[f]}$. 

\begin{assump} Throughout this section we assume that $S$ is a combinatorial inverse semigroup with $0$ having finite intervals and admitting a coherent set $\C$ of idempotent representatives of the $\D$-classes of $S$.
\end{assump}

We will prove that any inverse semigroup $S$ with the above properties is Morita equivalent to a labelled graph inverse semigroup. Later, this result is used to show that the labelled graph associated to the inverse hull of a Markov shift is a complete Morita equivalence invariant among all inverse hulls of Markov shifts.

\begin{rem} Given a coherent set $\C = \{e_v: v \in S/\D\}$ for $S$, there is a partial order $\fl$ on $S/\D$ defined by $u \fl v$ if and only if $e_u \leq e_v$. By Definition \ref{def:coherent} (1), $(S/\D, \fl)$ is a meet semilattice. We write $u \wedge v$ for the meet of $u,v \in S/\D$. 
\end{rem}

Define the labelled graph $(\E,\mcL)$ associated with $\C$ as follows. First, the vertex set is  
\[\E^0 := S/\D-\{0\},\]
and for each $v \in \E^0$, we define $B_v := \{w \in \E^0: w \fl v\}$. Let
\[ \B := \{B_v : v \in \E^0\} \cup \{\emptyset\}.
\]

Fix a vertex $v \in \E^0$ and a nonzero idempotent $f$ with $f \ll e_v$ and $f \neq e_{[f]}$. Define an edge $x_{v,f,u}$ for all $u \in B_{[f]}$ with 
\[
r(x_{v,f,u}) = v \text{ and } s(x_{v,f,u}) = u.
\] 
Also assign the label $\mcL(x_{v,f,u}) = (v,f)$. We now have a directed graph $\E = (\E^0,\E^1,r,s)$ and a labelling $\mcL$.


\begin{prop} The triple $(\E, \mcL, \B)$ is a labelled graph space. Moreover the labelled graph $(\E, \mcL)$ is strongly right reductive.
\end{prop}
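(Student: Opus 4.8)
The plan is to verify the two assertions separately, both by unwinding the definitions against the hypothesis that $(\E,\L)$ is strongly right resolving (I take ``strongly right reductive'' to be a typo for ``strongly right resolving''). For the second claim, suppose $e,f \in \E^1$ with $\L(e) = \L(f)$; by construction every edge has the form $x_{v,g,u}$ with label $(v,g)$, so equal labels force the same first coordinate $v$, and since $r(x_{v,g,u}) = v$ for all such edges, we get $r(e) = r(f) = v$ immediately. Thus $(\E,\L)$ is strongly right resolving essentially by inspection of the labelling, which is why I expect this half to be routine.

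For the first assertion, I would invoke the earlier proposition characterizing strongly right resolving labelled graph spaces: it suffices to check that $\B$ contains $\emptyset$, is closed under finite intersections, and contains $s(\alpha)$ for every $\alpha \in \L^{\geq 1}$. The empty set is included by definition of $\B$. For closure under intersections, I would compute $B_u \cap B_v = \{w : w \fl u \text{ and } w \fl v\}$ and use that $(S/\D, \fl)$ is a meet semilattice (from the Remark) to identify this with $B_{u \wedge v}$ when $u \wedge v \neq 0$, and with $\emptyset$ when $u \wedge v = 0$; either way the intersection lands in $\B$. The main content is the third condition: I must show each $s(\alpha)$ is of the form $B_w$ (or $\emptyset$). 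Here I would lean on Proposition~\ref{prop:relsource}, which under strong right resolving reduces $s(\alpha)$ for $\alpha = a_1 \ldots a_n$ to $s(a_n)$, so it is enough to verify that $s(a)$ lies in $\B$ for a single letter $a$. A letter is a label $(v,f)$ realized by edges $x_{v,f,u}$ for $u \in B_{[f]}$, and by construction the sources of these edges are exactly the elements of $B_{[f]}$; hence $s(a) = B_{[f]} \in \B$.

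I expect the genuine obstacle to be the third condition, specifically confirming that $s(a) = B_{[f]}$ exactly rather than some proper subset, and handling the interaction with the relative-source reduction cleanly. The subtlety is that $s(\alpha)$ is defined as the set of sources of \emph{representatives}, and I must make sure every $u \in B_{[f]}$ actually indexes an edge (which it does by fiat in the construction) and that no spurious sources appear. Once $s(a) = B_{[f]}$ is pinned down and the meet-semilattice structure gives closure under intersection, the multi-letter case follows formally from Proposition~\ref{prop:relsource} since $s(a_n)$ is again a single-letter source. I would then conclude that all three defining conditions hold, so by the cited proposition $(\E,\L,\B)$ is a weakly right resolving labelled graph space, completing the proof.
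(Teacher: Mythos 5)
Your proposal is correct and follows essentially the same route as the paper: both verify strong right resolving by inspecting labels, prove $B_u \cap B_v = B_{u\wedge v}$ (or $\emptyset$) via the meet-semilattice structure on $S/\D$, and use Proposition~\ref{prop:relsource} to reduce the source conditions to showing $s(a) = B_{[f]}$ for a single letter $a = (v,f)$. The only cosmetic difference is that you conclude by citing the general sufficiency criterion from the section on strongly right resolving graphs, whereas the paper directly verifies closure under relative sources; the underlying work is identical.
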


\begin{proof}
We first show that $\B$ is closed under finite intersections. For $u,v \in \E^0$, if $B_u \cap B_v = \emptyset$, then we are done since $ \emptyset \in \B$. Otherwise, $e_u e_v \neq 0$ (so $u \wedge v \in \E^0$) and we claim that $B_u \cap B_v = B_{u \wedge v}$. Let $w \in B_u \cap B_v$. Then $0 \neq e_w \leq e_u, e_v$. Thus $e_w \leq e_u e_v$ and $w \fl u \wedge v$. So $w \in B_{u \wedge v}$. Conversely, if $w \in B_{u \wedge v}$ then $w \fl u$ and $w \fl v$. So $w \in B_u \cap B_v$. It follows by induction that $\B$ is closed under finite intersections. 

Next we show that $(\E, \mcL)$ is strongly right reductive. Suppose that $x_{v_1, f, u_1}$ and $x_{v_2, f, u_2}$ are edges with the same label. Then $(v_1,f) = (v_2, f)$. In particular, $r(x_{v_1, f, u_1}) = v_1 = v_2 = r(x_{v_2, f, u_2})$.

Finally, we show that $\B$ is closed under relative sources. By Proposition \ref{prop:relsource}, it suffices to prove $s(a) \in \B$ for each $a = (v,f)$ in $\mcL^{1}$. If $s(a) = \emptyset$, we are done. Otherwise, notice that $u \in s(a)$ if and only if there exists an edge $(v,f,u)$ where $u \in B_{[f]}$. Thus $s(a) = B_{[f]} \in \B$.

\end{proof}

\subsection{$S$ contains the labelled graph inverse semigroup}
Let $S_\C$ be the labelled graph inverse semigroup of $(\E, \mcL, \B)$. We now show that $S_\C$ embeds in $S$.

For each $(v,f) \in \mcL^1$, we know that $f \not = e_{[f]}$. Let $s_{v,f} \in S$ be the unique element such that $s_{v,f}^*s_{v,f} = e_{[f]}$ and $s_{v,f}s_{v,f}^* = f$. For $\alpha \in \mcL^{\geq 1}$ with $\alpha = x_1\ldots x_n$ where $x_i = (v_i,f_i)$ for $i = 1,\ldots, n$, define 
\[s_\alpha := s_{x_1}\ldots s_{x_n} \text{ and } e_{\alpha} := s_\alpha s_\alpha^*.\]
Also, for $B_v \in \B$, let $e_{B_v} = e_v$. In this way, we have defined $e_{s(\alpha)}$ for each $\alpha \in \mcL^{\geq 1}$.

\begin{prop}
For each $\alpha \in \mcL^{\geq1}$, we have $s_{\alpha}^*s_{\alpha} = e_{s(\alpha)}$.
\end{prop}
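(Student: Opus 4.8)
The plan is to induct on the length $n$ of $\alpha = x_1 \cdots x_n$, where $x_i = (v_i, f_i)$, but first I would pin down the right-hand side. Since $\alpha \in \L^{\geq 1}$ admits a representative, $s(\alpha) \neq \emptyset$, and applying Proposition \ref{prop:relsource} with $A = \E^0$ (noting $r(x_1) = v_1 \in \E^0$) gives $s(\alpha) = s(x_n) = B_{[f_n]}$. By the definition $e_{B_v} = e_v$ this means $e_{s(\alpha)} = e_{[f_n]}$, so the goal reduces to showing $s_\alpha^* s_\alpha = e_{[f_n]}$.

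The base case $n = 1$ is immediate: $s_\alpha = s_{x_1}$, so $s_\alpha^* s_\alpha = s_{x_1}^* s_{x_1} = e_{[f_1]}$ by the defining property of $s_{v,f}$. For the inductive step, I would write
\[
s_\alpha^* s_\alpha = s_{x_n}^* \bigl( (s_{x_1} \cdots s_{x_{n-1}})^* (s_{x_1} \cdots s_{x_{n-1}}) \bigr) s_{x_n},
\]
so that the inductive hypothesis applied to $x_1 \cdots x_{n-1}$ collapses the inner factor to $e_{[f_{n-1}]}$, leaving the single identity $s_{x_n}^* e_{[f_{n-1}]} s_{x_n} = e_{[f_n]}$ to verify.

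The key observation is that consecutive letters of a genuine labelled path are order-constrained. By the earlier proposition giving $r(x_{i+1}) \in s(x_i)$, we have $v_n \in s(x_{n-1}) = B_{[f_{n-1}]}$, that is $v_n \fl [f_{n-1}]$, hence $e_{v_n} \leq e_{[f_{n-1}]}$. Since $f_n \ll e_{v_n}$ by the edge definition, we obtain $s_{x_n} s_{x_n}^* = f_n \leq e_{v_n} \leq e_{[f_{n-1}]}$. With $f_n \leq e_{[f_{n-1}]}$ secured, the identity is a routine inverse-semigroup computation: writing $s = s_{x_n}$ and $e = e_{[f_{n-1}]}$, the relation $ss^* \leq e$ gives $ess^* = ss^*$, whence $s^* e s = s^* e s s^* s = s^*(ess^*)s = s^*(ss^*)s = s^* s = e_{[f_n]}$.

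I expect the main obstacle to be the inductive step, and within it the crucial point is establishing the inequality $f_n \leq e_{[f_{n-1}]}$; once this order relation is in place the algebraic collapse is automatic. This is precisely where the construction of the labelled graph carries its weight: edges record immediately-below idempotents and source sets are downward-closed $\fl$-intervals, so composability of labelled paths translates directly into the inequality that drives the telescoping.
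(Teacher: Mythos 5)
Your proof is correct and takes essentially the same approach as the paper: both arguments rest on the chain $s_{x_n}s_{x_n}^* = f_n \leq e_{v_n} \leq e_{[f_{n-1}]}$ (composability plus the edge definition) and the resulting collapse $s_{x_n}^* e_{[f_{n-1}]} s_{x_n} = s_{x_n}^* s_{x_n}$, with the paper writing the telescoping out iteratively where you package it as an induction, and both identify $e_{s(\alpha)} = e_{[f_n]}$ via $s(\alpha) = s(x_n) = B_{[f_n]}$. One cosmetic point: $\E^0$ need not belong to $\B$, so instead of invoking Proposition \ref{prop:relsource} with $A = \E^0$ you should deduce $s(\alpha) = s(x_n)$ directly from the strongly right resolving property, as the paper does.
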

\begin{proof}
    Let $\alpha = x_1\ldots x_n$ be a label where $x_i = (v_i,f_i)$ for $i = 1,\ldots, n$. Since $(\E, \mcL)$ is strongly right reductive, $v_{i+1} = r(x_{i+1}) \in s(x_{i})$ for $i = 1,\ldots, n - 1$. Thus, $v_{i+1} \in B_{[f_i]}$, and in particular $v_{i+1} \preceq [f_i]$. Thus $e_{v_{i+1}} \le e_{[f_i]} = s_{x_i}^*s_{x_i}$. Recall also that $s_{x_i}s_{x_i}^* = f_i \le e_{v_i}$ for $i = 1,\ldots, n$. Thus
    \begin{align*}
        s_\alpha^*s_\alpha &= s^*_{x_n}\ldots s_{x_3}^*s_{x_2}^*(s_{x_1}^*s_{x_1})(s_{x_2})s_{x_3}\ldots s_{x_n}\\
        & = s^*_{x_n}\ldots s_{x_3}^*s_{x_2}^*(e_{[f_1]}e_{v_2})s_{x_2}s_{x_3}\ldots s_{x_n}\\
        & = s^*_{x_n}\ldots s_{x_3}^*s_{x_2}^*(e_{v_2}s_{x_2})s_{x_3}\ldots s_{x_n}\\
        & = s^*_{x_n}\ldots s_{x_3}^*(s_{x_2}^*s_{x_2})(s_{x_3})\ldots s_{x_n}\\
        & \cdots\\
        & = s^*_{x_n}s_{x_n} = e_{[f_n]}
    \end{align*}
    Since $(\E, \mcL)$ is strongly right reductive, $s(\alpha) = s(x_n)$. Notice that the representatives of $x_n$ are exactly the set of $x_{v_n,f_n,u}$ where $u \in B_{[f_n]}$, and that the source of each such representative is $u$. Thus $s(x_n) = B_{[f_n]}$. So 
    \[s_\alpha^*s_\alpha = e_{[f_n]} = e_{s(x_n)} = e_{s(\alpha)}\]
    as desired.
\end{proof}

\begin{prop}\label{prop:threeproperties} Given the labelled graph space $(\E, \B, \mcL)$ defined above, we have the following properties:
\begin{enumerate}

\item for $x,y \in \mcL^1$, $s_x^*s_y \not = 0$ if and only if $x = y$,

\item for $x,y \in \mcL^1$, $e_x = e_y$ if and only if $x = y$, and

\item for $\alpha, \beta \in \mcL^{\geq 1}$, 
	\[ e_{s(\alpha)} s_{\beta} = \begin{cases} s_{\beta} & \text{ if $r(\beta) \in s(\alpha)$} \\
																			0 & \text{otherwise}. \end{cases}
	\]
\end{enumerate}
\end{prop}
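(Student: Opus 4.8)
The plan is to base all three parts on two structural observations about the idempotents occurring in edge labels. Write $f_x$ for the idempotent in a label $x=(v,f)\in\L^1$, so that $f_x=s_xs_x^\ast$ and $e_{[f_x]}=s_x^\ast s_x$. The first observation, which I will call $(\ast)$, is that no nonzero element of $\C$ lies below an edge-idempotent: if $f\ll e_v$ is an edge-idempotent (so $f\notin\C$) and $e_u\in\C$ satisfies $0\neq e_u\le f$, then $0\neq e_u\le f\le e_v$ forces $f\in\C$ by Definition \ref{def:coherent}(2), a contradiction. The second observation is that the vertex is determined by the idempotent: if $f\ll e_{v_1}$ and $f\ll e_{v_2}$ with $f\notin\C$, then $0\neq f\le e_{v_1}e_{v_2}$, so $e_{v_1}e_{v_2}=e_{v_1\wedge v_2}$ and $f\le e_{v_1\wedge v_2}\le e_{v_1}$; since $f\ll e_{v_1}$ we get $e_{v_1\wedge v_2}\in\{f,e_{v_1}\}$, and as it lies in $\C$ it cannot equal $f$, whence $e_{v_1\wedge v_2}=e_{v_1}$. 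Symmetrically $e_{v_1\wedge v_2}=e_{v_2}$, so $v_1=v_2$. Consequently $x\neq y$ in $\L^1$ is equivalent to $f_x\neq f_y$, and this immediately gives part (2) because $e_x=f_x$ and $e_y=f_y$.

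The technical core is the following vanishing lemma, which I would prove first: if $f\ll e_v$ is an edge-idempotent and $c\in\C$ satisfies $0\neq c\le e_v$ and $c\neq e_v$, then $cf=0$. To see this, suppose $cf\neq 0$. Since $c<e_v$, finite intervals provide $h\ll e_v$ with $c\le h$ (the top step of a maximal idempotent chain from $c$ to $e_v$), and Definition \ref{def:coherent}(2) applied to $0\neq c\le h\le e_v$ gives $h\in\C$. As $h\in\C$ and $f\notin\C$ we have $h\neq f$, so $h$ and $f$ are distinct idempotents immediately below $e_v$ and hence incomparable; Definition \ref{def:coherent}(3) then yields $hf\in\C$. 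But $cf\le hf$, so $hf$ is a nonzero element of $\C$ lying below $f$, contradicting $(\ast)$.

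For part (1) I would use the general inverse-semigroup identity $ab\neq 0\iff a^\ast a\,bb^\ast\neq 0$; with $a=s_x^\ast$ and $b=s_y$ this reads $s_x^\ast s_y\neq 0\iff f_xf_y\neq 0$. When $x=y$ this is nonzero, so it remains to show $f_xf_y=0$ whenever $x\neq y$, i.e. whenever $f_x\neq f_y$. Let $v_i$ be the vertex with $f_{x_i}\ll e_{v_i}$. If $v_1=v_2$, then $f_x,f_y$ are incomparable immediate predecessors of a common $e_v$, so $f_xf_y\in\C$ by Definition \ref{def:coherent}(3), and $0\neq f_xf_y\le f_x$ would contradict $(\ast)$; hence $f_xf_y=0$. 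If $v_1\neq v_2$, set $e_w=e_{v_1}e_{v_2}=e_{v_1\wedge v_2}$, so $f_xf_y\le e_w$; when $e_w<e_{v_1}$ I apply the vanishing lemma with $c=e_w\le e_{v_1}$ and $f=f_x$ to get $f_xf_y\le e_wf_x=0$, and when $e_w=e_{v_1}$ (so $e_{v_1}<e_{v_2}$) I apply it with $c=e_{v_1}\le e_{v_2}$ and $f=f_y$ to get $f_xf_y\le e_{v_1}f_y=0$. This cross-vertex case is the step I expect to be the main obstacle: the product of two edge-idempotents below distinct representatives need not visibly lie in $\C$, so Definition \ref{def:coherent}(3) does not apply directly, and the device that rescues the argument is to descend to the meet $e_{v_1\wedge v_2}$ and feed the resulting $\C$-element into the vanishing lemma, whose proof manufactures an applicable instance of Definition \ref{def:coherent}(3).

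For part (3), write $s(\alpha)=B_w$, so that $e_{s(\alpha)}=e_w$ and $r(\beta)\in s(\alpha)$ means $r(\beta)\fl w$; set $u=r(\beta)$. Since $s_\beta s_\beta^\ast\le e_u$ we have $e_us_\beta=s_\beta$. If $u\fl w$ then $e_u\le e_w$, so $e_ws_\beta=(e_we_u)s_\beta=e_us_\beta=s_\beta$, as required. If $u\not\fl w$, then $e_we_u=e_{w\wedge u}$ with $e_{w\wedge u}<e_u$ (possibly $0$); writing $s_\beta=s_{y_1}\cdots s_{y_m}$ with $f_{y_1}\ll e_u$, the vanishing lemma applied with $c=e_{w\wedge u}\le e_u$ (note $c\neq e_u$ since $u\not\fl w$) and $f=f_{y_1}$ gives $e_{w\wedge u}f_{y_1}=0$, whence $e_ws_\beta=e_{w\wedge u}f_{y_1}s_{y_1}\cdots s_{y_m}=0$. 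This completes the reduction of all three parts to $(\ast)$ and the single vanishing lemma.
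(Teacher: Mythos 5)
Your proof is correct, but it is organized around a different decomposition than the paper's. The paper proves (1) by an inline contradiction: from $f_xf_y\neq 0$ it shows the two vertices coincide by using finite intervals to produce $h$ with $e_{u\wedge v}\le h\ll e_v$, coherence (2) to place $h\in\C$, and incomparability of $h$ with $f$ to kill $hf$ --- this is exactly your vanishing lemma with $c=e_{u\wedge v}$, except the paper never isolates it as a statement; it then deduces (2) as a corollary of (1), and proves (3) by a separate three-case analysis comparing the edge-idempotent $f$ directly with $e_ue_v$ (incomparable, above, or below). Your modular route buys two things. First, reusability: one vanishing lemma powers both the cross-vertex case of (1) and all of (3), and your observation that the vertex is determined by the idempotent gives (2) directly, without passing through (1). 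Second, rigor: in the paper's proof of (3), Definition \ref{def:coherent}(3) is invoked for the pair $f\ll e_v$ and $e_ue_v$, yet $e_ue_v$ is only known to satisfy $e_ue_v<e_v$, not $e_ue_v\ll e_v$, so that invocation sits strictly outside the hypotheses of the definition as stated; your lemma repairs this by always interposing an $h$ with $c\le h\ll e_v$ manufactured from finite intervals, which is the legitimate setting for coherence (3). One small point to tighten: the vanishing lemma requires $c\neq 0$, and in two of your applications ($c=e_{v_1\wedge v_2}$ in part (1) and $c=e_{w\wedge u}$ in part (3)) the meet may be zero; the desired conclusion $cf=0$ is trivial in that case, but you should say so explicitly rather than leave it at ``possibly $0$.''
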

\begin{proof}
For (1), let $x = (v,f), y = (u,g)$. If $x = y$, then $s_x^* s_x = e_{[f]} \neq 0$ by definition. Conversely, suppose that $s_x^*s_y \not = 0$. Then $(s_x^*s_xs_x^*)(s_ys_y^*s_y) \not = 0$, so $(s_xs_x^*)(s_ys_y^*)\not = 0$. Thus 
\[ 0 \not = (s_xs_x^*)(s_ys_y^*) = fg \le e_ve_u. 
\]

It follows that $e_{u \wedge v} = e_u e_v \neq 0$. We claim that $u = v = u \wedge v$. For contradiction, suppose that $e_{u \wedge v} < e_v$. Since $S$ has finite intervals, there exists $h$ such that $0 \neq e_{u \wedge v} \leq h \ll e_v$. Then $h \in C$ and hence $h \neq f$. Since $h \ll e_v$ and $f \ll e_v$ we know that $h$ and $f$ are incomparable. Thus $hf = 0$. We have $0 \neq fg \leq e_{u \wedge v} f = e_{u \wedge v} hf = 0$, a contradiction. Thus $v = u \wedge v$ and similarly $u = u \wedge v$.

We have $f \ll e_u$ and $g \ll e_u$. Since $fg \neq 0$ and $fg \not \in \C$, we conclude that $f$ and $g$ are comparable. Say without loss of generality that $f \leq g \leq e_u$. Then since $f \ll e_u$ it follows that $f = g$ or $g = e_u$. Since $g \not \in \C$, we must have that $f = g$. Thus $x = (v,f) = (u,g) = y$.

For (2), suppose that $e_x = e_y$, where $x = (v,f), y = (u,g)$. Then $s_x s_x^* s_y s_y^* = e_x e_y = e_x \neq 0$. So $s_x^* s_y \neq 0$ and hence $x = y$. The converse is clear.

To prove (3), we first show for $B_u \in \E^0$ and $x = (v,f) \in \mcL^1$, that 
    \[e_{B_u}s_x = \begin{cases}
        s_x & \text{if } r(x) \in B_u\\
        0 & \text{otherwise}
    \end{cases}\]
    Suppose $v = r(x) \in B_u$, i.e., $v \preceq u$. Thus $f \ll e_v \le e_u$, so 
\[
e_{B_u}s_x = e_u (s_xs_x^*)s_x = e_u f s_x = f s_x = s_xs_x^*s_x = s_x.
\]
 Conversely, suppose $v = r(x) \notin B_u$. Then $e_v \not\le e_u$. Suppose for the sake of obtaining a contradiction that $fe_u = fe_ve_u$ is nonzero. Since $e_v \not\le e_u$, it must be that $e_u e_v < e_v$. We consider three cases, each resulting in a contradiction. If $f$ and $e_u e_v$ are incomparable, then by Definition \ref{def:coherent} (3), $f e_u e_v \in \C$. Since $0 \neq f e_u e_v \leq f \ll e_v$, $f \in C$ by Definition \ref{def:coherent} (2). This is a contradiction. Similarly, if $e_u e_v \leq f$ then $f \in \C$, a contradiction. Finally, suppose that $f \leq e_u e_v$. Since $f \ll e_v$ and $f \leq e_u e_v < e_v$, we conclude that $f = e_u e_v$. Thus $f \in \C$, another contradiction.

The general case follows from the fact that if $s(\alpha) = B_u$ for some $u \in E_S^0$, and $\beta = x_1\ldots x_n$ where $x_1,\ldots, x_n \in \mcL^1$ and $r(\beta) = r(x_1)$. Then
\begin{align*}
e_{s(\alpha)}s_{\beta} &= e_{B_u}s_{x_1}\ldots s_{x_n} \\
					   &= \begin{cases}
        s_{x_1}\ldots s_{x_n} & \text{if }r(x_1)  \in B_u\\
        0s_{x_2}\ldots s_{x_n} & \text{otherwise}
    \end{cases} \\
    				  &= \begin{cases}
        s_\beta & \text{if } r(\beta) \in s(\alpha)\\
        0 & \text{otherwise}
    \end{cases}
\end{align*}

\end{proof}

\begin{cor}
For any word $\alpha \in \mcL^{\geq 1}$, we have $s_\alpha\not = 0$ and $e_{\alpha} \not = 0$.
\end{cor}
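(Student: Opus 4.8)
The plan is to read off the conclusion directly from the immediately preceding proposition, which computes $s_\alpha^* s_\alpha = e_{s(\alpha)}$ for every $\alpha \in \L^{\geq 1}$. The key elementary observation is that in any inverse semigroup an element and its source idempotent vanish together: if $s_\alpha^* s_\alpha = 0$ then $s_\alpha = s_\alpha(s_\alpha^* s_\alpha) = 0$, and likewise $s_\alpha$ and $e_\alpha = s_\alpha s_\alpha^*$ vanish together. Hence the entire statement reduces to showing that $e_{s(\alpha)} \neq 0$, and the two assertions $s_\alpha \neq 0$ and $e_\alpha \neq 0$ come for free once that single fact is in hand.

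Writing $\alpha = x_1 \ldots x_n$ with $x_n = (v_n, f_n)$, the computation in the preceding proposition already identifies $s(\alpha) = s(x_n) = B_{[f_n]}$, and by the assignment $e_{B_v} = e_v$ this gives $e_{s(\alpha)} = e_{[f_n]}$. So I would reduce everything to checking that $e_{[f_n]} \neq 0$. Here is where I invoke the construction of the labelled graph: every edge label $(v,f) \in \L^1$ arises from a \emph{nonzero} idempotent $f$ with $f \ll e_v$, so in particular $f_n \neq 0$. Since $S$ has a zero and $f_n$ is a nonzero idempotent, the $\D$-class $[f_n]$ is a nonzero class, and therefore its representative $e_{[f_n]} \in \C$ is nonzero.

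Chaining these observations gives $s_\alpha^* s_\alpha = e_{s(\alpha)} = e_{[f_n]} \neq 0$, whence $s_\alpha \neq 0$, and then $e_\alpha = s_\alpha s_\alpha^* \neq 0$ as well. I do not expect any genuine obstacle: the only point that actually uses the hypotheses is the remark that edges are built from nonzero idempotents, so that the terminal source idempotent $e_{[f_n]}$ cannot collapse to $0$. All of the real algebraic work has been front-loaded into the telescoping computation of $s_\alpha^* s_\alpha$ in the preceding proposition, leaving this corollary as a short bookkeeping argument.
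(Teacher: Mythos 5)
Your proof is correct, and it takes a more direct route than the paper's. The paper peels off the last letter: writing $\alpha = \alpha' x$ with $x \in \L^1$, it applies Proposition \ref{prop:threeproperties}(3) to get $e_{s(\alpha')}s_x = s_x \neq 0$ (since $r(x) \in s(\alpha')$), rewrites $e_{s(\alpha')}s_x$ as $s_{\alpha'}^* s_{\alpha'} s_x$ via the preceding proposition applied to the prefix $\alpha'$, and concludes that $s_\alpha = s_{\alpha'}s_x$ cannot be $0$; the claim for $e_\alpha$ then follows exactly as in your last step. You instead apply the preceding proposition to $\alpha$ itself and read off the closed form $s_\alpha^* s_\alpha = e_{s(\alpha)} = e_{[f_n]}$, which is nonzero because edges are only built from nonzero idempotents $f \ll e_v$, and the representative in $\C$ of the $\D$-class of a nonzero idempotent is itself nonzero (the $\D$-class of $0$ is $\{0\}$). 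There is no circularity in this shortcut: the telescoping computation establishing $s_\alpha^* s_\alpha = e_{[f_n]}$ is a purely algebraic identity that nowhere assumes $s_\alpha \neq 0$. What your route buys is brevity and independence from Proposition \ref{prop:threeproperties}(3); what the paper's route exhibits is the inductive mechanism --- the source idempotent of a prefix acts as a left identity on the next letter --- which is the same device reused in Proposition \ref{prop:pathprods}. Both arguments are valid.
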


\begin{proof}
    Clearly $s_x \not = 0$ for all $x \in \mcL^1$ by construction. Let $\alpha = \alpha'x$ where $x \in \mcL^1$ and $\alpha' \in \mcL^*$. If $\alpha' = \omega$, then $s_\alpha = s_x \not = 0$. Otherwise, $e_{s(\alpha')}s_x = s_x \not =0$ since $r(x) \in s(\alpha')$. But then $0 \not = e_{s(\alpha')}s_x = s_{\alpha'}^*s_{\alpha'}s_x$. So 
    \[ s_{\alpha} = s_{\alpha'}s_x = s_{\alpha} \neq 0.\] 
Also, if $e_\alpha = 0$, then $s_\alpha s_\alpha^* = e_\alpha = 0$, so $s_\alpha = 0$. We conclude that $e_\alpha \neq 0$
\end{proof}

\begin{prop}\label{prop:pathprods} For $\alpha,\beta \in \mcL^{\geq 1}$ we have
    \[s_\alpha^*s_\beta = \begin{cases}
        e_{s(\alpha)} & \text{if }\alpha = \beta\\
        s_{\beta'} & \text{if }\beta = \alpha\beta', \beta' \not = \omega \\
        s^*_{\alpha'}& \text{if }\alpha = \beta\alpha', \alpha' \not = \omega\\
        0 & \text{otherwise.}
    \end{cases}\]
\end{prop}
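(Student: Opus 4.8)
The plan is to prove the formula by the obvious four-way case analysis that matches the four branches of the stated piecewise definition, reducing each branch to results already established: the identity $s_\alpha^*s_\alpha = e_{s(\alpha)}$, Proposition \ref{prop:threeproperties}, and the earlier observation that any word $a_1\ldots a_n \in \L^{\geq 1}$ satisfies $r(a_{i+1}) \in s(a_i)$. The diagonal case $\alpha = \beta$ is immediate, since $s_\alpha^*s_\beta = s_\alpha^*s_\alpha = e_{s(\alpha)}$ by the proposition computing $s_\alpha^*s_\alpha$. The four cases are mutually exclusive (if $\alpha = \beta$ then the sublabels $\alpha',\beta'$ in the middle two branches are forced to be $\omega$, which is excluded), so it suffices to treat them in turn.

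For the branch $\beta = \alpha\beta'$ with $\beta' \neq \omega$, I would write $s_\alpha^*s_\beta = s_\alpha^*s_\alpha s_{\beta'} = e_{s(\alpha)}s_{\beta'}$ and apply Proposition \ref{prop:threeproperties}(3). To conclude this equals $s_{\beta'}$, I must check $r(\beta') \in s(\alpha)$: writing $\alpha = x_1\ldots x_n$ and $\beta' = y_1\ldots y_m$, the word $\beta = x_1\ldots x_n y_1 \ldots y_m$ lies in $\L^{\geq 1}$, so the path proposition gives $r(y_1) \in s(x_n) = s(\alpha)$, and $r(\beta') = r(y_1)$. The branch $\alpha = \beta\alpha'$ with $\alpha' \neq \omega$ then follows by taking adjoints: $s_\alpha^*s_\beta = (s_\beta^*s_\alpha)^* = (e_{s(\beta)}s_{\alpha'})^* = s_{\alpha'}^*$, using the same argument to see $r(\alpha') \in s(\beta)$.

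The main obstacle is the final ``otherwise'' branch, where $\alpha$ and $\beta$ are incomparable and we must show $s_\alpha^*s_\beta = 0$. Here I would let $\mu \in \L^*$ be the longest common sublabel of $\alpha$ and $\beta$; since neither is a sublabel of the other, both strictly extend $\mu$ with differing next letters, so $\alpha = \mu x \alpha''$ and $\beta = \mu y \beta''$ for some $x,y \in \L^1$ with $x \neq y$ and $\alpha'',\beta'' \in \L^*$. If $\mu \neq \omega$, then $s_\alpha^*s_\beta = s_{\alpha''}^*s_x^*(s_\mu^*s_\mu)s_y s_{\beta''} = s_{\alpha''}^*s_x^* e_{s(\mu)} s_y s_{\beta''}$; since $\mu y$ is a sublabel of the path $\beta$, the path proposition and Proposition \ref{prop:relsource} give $r(y) \in s(\mu)$, so $e_{s(\mu)}s_y = s_y$ by Proposition \ref{prop:threeproperties}(3), leaving $s_{\alpha''}^*s_x^*s_y s_{\beta''}$. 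If instead $\mu = \omega$, the same expression $s_{\alpha''}^*s_x^*s_y s_{\beta''}$ arises directly from the factorizations $s_\alpha^* = s_{\alpha''}^*s_x^*$ and $s_\beta = s_y s_{\beta''}$. In either case Proposition \ref{prop:threeproperties}(1) yields $s_x^*s_y = 0$ because $x \neq y$, hence $s_\alpha^*s_\beta = 0$. The only care needed is the bookkeeping of the maximal common prefix and the justification that $r(y) \in s(\mu)$ so that the idempotent $e_{s(\mu)}$ may be absorbed; once that is in place, property (1) closes the argument.
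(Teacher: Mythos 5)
Your proposal is correct and follows essentially the same route as the paper's proof: the same four-case analysis, the same use of Proposition \ref{prop:threeproperties}(3) (after verifying $r(\beta')\in s(\alpha)$ via the path proposition) for the two sublabel cases, the same adjoint trick for the case $\alpha = \beta\alpha'$, and the same factorization through the maximal common prefix to reduce the incomparable case to $s_x^*s_y = 0$ via Proposition \ref{prop:threeproperties}(1). The only cosmetic difference is in that last case: the paper discards the interior idempotent $s_w^*s_w$ using the inequality $s_q^*s_x^*(s_w^*s_w)s_{x'}s_{q'}\leq s_q^*s_x^*s_{x'}s_{q'}$ and minimality of $0$, whereas you absorb it as $e_{s(\mu)}s_y = s_y$; both are valid.
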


\begin{proof}
    The first case follows by definition of $e_{s(\alpha)}$. Suppose $\beta = \alpha\beta'$, $\beta' \not = \omega$. Then $r(\beta') \in s(\alpha)$ and 
\[ 
s_\alpha^*s_\beta = s_\alpha^*s_\alpha s_{\beta'} = e_{s(\alpha)}s_{\beta'} = s_{\beta'}.\] 

Likewise, suppose $\alpha = \beta \alpha'$, $\alpha' \not = \omega$. Then $r(\alpha') \in s(\beta)$ and $s_\alpha^*s_\beta = s_{\alpha'}^*s_{\beta}^*s_\beta = s_{\alpha'}^*e_{s(\beta)} = (e_{s(\beta)}s_{\alpha'})^* = s_{\alpha'}^*$.
    
Now suppose $\alpha$ is not a sublabel of $\beta$ and $\beta$ is not a sublabel of $\alpha$. Then for some labels $w,q,q' \in \mcL_S^*$ and $x,x' \in \mcL_S^1$ we have $\alpha = wxq, \beta = wx'q',$ and $x\not =x'$. Assume $w,q,q' \not = \omega$ (the other cases follow similarly). Then 
    \[s_\alpha^*s_\beta = s_q^*s_x^*s_w^*s_ws_{x'}s_{q'} \le s_q^*s_x^*s_{x'}s_{q'} \]
Now $s_x^*s_{x'} = 0$, by Proposition \ref{prop:threeproperties} (1), so $s_\alpha^*s_\beta = 0$.
\end{proof}


\begin{prop}
For $\alpha,\beta \in \mcL^{\geq 1}$, we have $\alpha = \beta$ if and only if $e_\alpha = e_\beta$.
\end{prop}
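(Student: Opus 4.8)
The forward implication is immediate: if $\alpha = \beta$ then $e_\alpha = s_\alpha s_\alpha^* = s_\beta s_\beta^* = e_\beta$. The plan is therefore to concentrate on the converse, assuming $e_\alpha = e_\beta$ and deducing $\alpha = \beta$.

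First I would use the corollary above, which guarantees $e_\alpha \neq 0$, to note that $e_\alpha e_\beta = e_\alpha e_\alpha = e_\alpha \neq 0$. Since $e_\alpha e_\beta = s_\alpha (s_\alpha^* s_\beta) s_\beta^*$, this forces $s_\alpha^* s_\beta \neq 0$. Applying Proposition \ref{prop:pathprods}, the only possibilities are $\alpha = \beta$, which is the desired conclusion, or $\beta = \alpha\beta'$ with $\beta' \neq \omega$, or $\alpha = \beta\alpha'$ with $\alpha' \neq \omega$. It thus remains to rule out the two cases in which one label is a proper sublabel of the other, and by the symmetry of the hypothesis it suffices to eliminate the case $\beta = \alpha\beta'$ with $\beta' \neq \omega$.

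In that case Proposition \ref{prop:pathprods} gives $s_\alpha^* s_\beta = s_{\beta'}$ and $s_\beta^* s_\alpha = s_{\beta'}^*$. Using the hypothesis $s_\alpha s_\alpha^* = s_\beta s_\beta^*$, I would compute $s_\alpha^* = s_\alpha^*(s_\alpha s_\alpha^*) = s_\alpha^*(s_\beta s_\beta^*) = s_{\beta'} s_\beta^*$, and then $e_{s(\alpha)} = s_\alpha^* s_\alpha = s_{\beta'}(s_\beta^* s_\alpha) = s_{\beta'} s_{\beta'}^* = e_{\beta'}$. The key consequence is that the nonzero idempotent $e_{s(\alpha)}$, which lies in $\C$ since it equals $e_{[f_n]}$ for the last letter $(v_n,f_n)$ of $\alpha$, is equal to $e_{\beta'}$.

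I would then extract a contradiction. Writing $\beta' = y_1\cdots y_m$ with first letter $y_1 = (w,h)$, the construction of the edges forces $h \ll e_w$ and $h \neq e_{[h]}$, so $h \notin \C$. Since $e_{\beta'} = s_{\beta'} s_{\beta'}^* \leq s_{y_1} s_{y_1}^* = h$, we obtain the chain $0 \neq e_{s(\alpha)} \leq h \leq e_w$ with both $e_{s(\alpha)}$ and $e_w$ in $\C$, so Definition \ref{def:coherent}(2) yields $h \in \C$, contradicting $h \notin \C$. This rules out $\beta = \alpha\beta'$, and symmetrically $\alpha = \beta\alpha'$, leaving $\alpha = \beta$. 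The main obstacle is organizing the coherence contradiction cleanly: the crux is the identity $e_{s(\alpha)} = e_{\beta'}$, after which axiom (2) applies only once one checks $e_{\beta'} \leq h$. That inequality is the one genuinely computational point, following from $tpt^* \leq tt^*$ for the idempotent $p = e_{y_2\cdots y_m} \leq e_{s(y_1)} = s_{y_1}^* s_{y_1}$, where $p \leq e_{s(y_1)}$ is obtained from $r(y_2) \in s(y_1)$ and the strong right resolving property; everything else is a direct appeal to Proposition \ref{prop:pathprods} and the coherence axioms.
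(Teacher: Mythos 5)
Your proposal is correct and follows essentially the same route as the paper: reduce to comparability via $s_\alpha^* s_\beta \neq 0$ and Proposition \ref{prop:pathprods}, then kill the proper-sublabel case by observing that the first letter $(w,h)$ of the excess word satisfies $0 \neq e_{s(\alpha)} \leq h \ll e_w$ with $e_{s(\alpha)}, e_w \in \C$ but $h \notin \C$, contradicting Definition \ref{def:coherent}(2). The paper organizes the computation as a single conjugation $s_\alpha^*(s_\beta s_\beta^*)s_\alpha = s_{\beta'}s_{\beta'}^* \leq s_{x_1}s_{x_1}^*$ rather than your two-step derivation of $e_{s(\alpha)} = e_{\beta'}$ followed by $e_{\beta'} \leq h$, but this is only a cosmetic difference.
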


\begin{proof}
    If $\alpha = \beta$, then $e_\alpha = e_\beta$. Conversely, suppose that $e_\alpha = e_\beta$. Then $0 \not = e_\alpha e_\beta = s_\alpha s_\alpha^*s_\beta s_\beta^*$, so $s^*_\alpha s_\beta \not = 0$. This implies $\alpha$ is a sublabel of $\beta$ or vice versa. For contradiction, assume $\alpha$ is a proper sublabel of $\beta$. That is, $\beta = \alpha x_1\ldots x_n$ for letters $x_1,\ldots, x_n \in \mcL^1$. Since $e_\alpha = e_\beta$, $s_\alpha s_\alpha^* = s_\beta s_\beta^*$. Conjugating by $s_\alpha^*$, we have
\begin{align*}
s_\alpha^*s_\alpha  &= s_\alpha^*(s_\beta s_\beta^*)s_\alpha \\
					&= (s_\alpha^*s_\beta)(s_\alpha^*s_\beta)^* \\
					&= s_{x_1}\ldots s_{x_n} s_{x_n}^*\ldots s_{x_1}^* \\
					&\leq s_{x_1}s_{x_1}^*.
\end{align*}    

Let $x_1 = (u,g) \in \mcL^1$ and $s_\alpha^*s_\alpha = e_{s(\alpha)} = e_v$ for some $u,v \in S/\D$ and $g \in E(S)$. We know $g \ll e_u$ and $g \not = e_{[g]}$. Notice that
\[ 0 \not  = e_v = s_\alpha^*s_\alpha \le s_{x_1}s_{x_1}^* = g \ll e_u,\]
so $g = e_{[g]}$ by Proposition \ref{prop:threeproperties} (2), a contradiction. Thus $\alpha$ is not a proper sublabel of $\beta$. Similarly it can be shown that $\beta$ is not a proper sublabel of $\alpha$. Thus $\alpha = \beta$.
\end{proof}


We now define a map $\sigma : S_\C \to S$ by
\[\sigma(\alpha, B_v, \beta) = \begin{cases}
    s_\alpha e_v s_\beta^* & \text{if } \alpha,\beta \not = \omega\\
    s_\alpha e_v & \text{if }\alpha \not = \omega = \beta\\
    e_v s_\beta^* & \text{if } \beta \not = \omega = \alpha\\
    e_v & \text{if } \alpha,\beta = \omega
\end{cases}\]
and $\sigma (0) = 0$. We will show that $\sigma$ is an injective homomorphism.

\begin{prop}\label{prop:ranges}
    For $\alpha \in \mcL^{\geq 1}$, we have that $e_\alpha \not \in \C$ and $e_\alpha < e_v$ for some $v \in \E^0_S$.
\end{prop}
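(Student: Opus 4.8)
The plan is to prove the two assertions separately, writing $\alpha = x_1\cdots x_n$ with $x_i = (v_i, f_i) \in \L^1$, so that by construction $f_i \ll e_{v_i}$ and $f_i \neq e_{[f_i]}$.

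For the inequality $e_\alpha < e_v$, I would take $v = v_1 = r(\alpha) \in \E^0_S$. The key observation is the elementary inverse-semigroup identity $(st)(st)^* = s(tt^*)s^* \le ss^*$, applied with $s = s_{x_1}$ and $t = s_{x_2}\cdots s_{x_n}$ (the case $n=1$ being trivial, as then $e_\alpha = f_1$). This gives $e_\alpha = s_\alpha s_\alpha^* \le s_{x_1}s_{x_1}^* = f_1$. Since $f_1 \ll e_{v_1}$ we have $f_1 < e_{v_1}$, and therefore $e_\alpha \le f_1 < e_{v_1}$, which proves $e_\alpha < e_{v_1}$. Note that this step also records the inequality $e_\alpha \le f_1$, which is what feeds into the second part.

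For $e_\alpha \notin \C$, I would argue by contradiction, using the previously established identity $s_\alpha^* s_\alpha = e_{s(\alpha)} = e_{[f_n]}$, which lies in $\C$. Hence $e_\alpha = s_\alpha s_\alpha^* \D e_{[f_n]}$, so $e_{[f_n]}$ is the unique $\C$-representative of the $\D$-class of $e_\alpha$. If $e_\alpha$ were itself in $\C$, uniqueness of $\D$-class representatives in $\C$ would force $e_\alpha = e_{[f_n]}$. Combining with the first part, we would then obtain
\[
    0 \neq e_{[f_n]} = e_\alpha \le f_1 \le e_{v_1},
\]
the nonzeroness being exactly the content of the preceding Corollary. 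This is an interval whose endpoints $e_{[f_n]}$ and $e_{v_1}$ both belong to $\C$, so Definition \ref{def:coherent}(2) forces $f_1 \in \C$, i.e. $f_1 = e_{[f_1]}$, contradicting $f_1 \neq e_{[f_1]}$. Thus $e_\alpha \notin \C$.

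The only real subtlety is recognizing that the $\C$-representative of $[e_\alpha]$ is precisely $e_{[f_n]}$ (read off from $s_\alpha^* s_\alpha$ via the $\D$-relation) and then squeezing $e_\alpha = e_{[f_n]}$ into the interval between $e_{[f_n]}$ and $e_{v_1}$ so that the coherence hypothesis applies to the forbidden idempotent $f_1$. Everything else is a routine manipulation of the natural partial order, with the nonzeroness required to invoke Definition \ref{def:coherent}(2) supplied by the Corollary.
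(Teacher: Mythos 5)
Your proof is correct and takes essentially the same route as the paper: both arguments reduce to the chain $e_\alpha \le f_1 \ll e_{v_1}$ (with $e_\alpha \neq 0$ from the preceding Corollary) and then invoke Definition \ref{def:coherent}(2) to force $f_1 \in \C$, contradicting $f_1 \neq e_{[f_1]}$. Your intermediate identification $e_\alpha = e_{[f_n]}$ via uniqueness of $\D$-class representatives is correct but superfluous, since under the contradiction hypothesis $e_\alpha \in \C$ it already serves directly as the lower endpoint of the interval.
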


\begin{proof}
    Let $\alpha = x_1\ldots x_n$, and let $x_1 = (v,f)$ where $f \not = e_{[f]}$. Then
\begin{align*}
e_\alpha &= s_\alpha s_\alpha^* \\
		 &= s_{x_1}\ldots s_{x_n}s_{x_n}^*\ldots s_{x_1}^* \\
		 &\leq s_{x_1}s_{x_1}^* = f.
\end{align*}    
    
    If $e_\alpha \in \C$, then $f \in \C$ since $0 \neq e_\alpha \le f \ll e_v$, by Proposition \ref{prop:threeproperties} (2). By contradiction, so $e_\alpha \not\in \C$
\end{proof}

\begin{prop}\label{prop:thezerocase}
   For $t \in S_\C$, $\sigma(t) = 0$ if and only if $t = 0$.
\end{prop}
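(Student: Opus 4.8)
The plan is to dispatch the forward implication immediately---if $t = 0$ then $\sigma(t) = 0$ by definition---and to prove the converse in contrapositive form: assuming $t \neq 0$, I would show $\sigma(t) \neq 0$. A nonzero element of $S_\C$ has the form $t = (\alpha, B_v, \beta)$ with $v \in \E^0$, so that $e_v \neq 0$ (recall $\E^0 = S/\D - \{0\}$), and with the membership constraint $B_v \in \B_\alpha \cap \B_\beta$ built into the definition of $S_\C$.

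The crucial preliminary observation is that $B_v \in \B_\alpha$ means $B_v \subseteq s(\alpha)$, so in particular $v \in s(\alpha)$. Writing $s(\alpha) = B_u$ for the appropriate $u$, this says $v \fl u$, whence $e_v \le e_u = e_{s(\alpha)} = s_\alpha^* s_\alpha$, using the identity $s_\alpha^* s_\alpha = e_{s(\alpha)}$ established earlier together with the definition $e_{B_u} = e_u$. The symmetric statement $e_v \le e_{s(\beta)} = s_\beta^* s_\beta$ holds as well (and in the degenerate case $\alpha = \omega$ the constraint reads $B_v \subseteq s(\omega) = \E^0$, which is automatic). These inequalities are the engine of the argument.

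With these in hand I would treat the four cases in the definition of $\sigma$ uniformly by multiplying $\sigma(t)$ on the left by $s_\alpha^*$ and on the right by $s_\beta$, omitting whichever factor corresponds to an empty word. In the generic case $\alpha,\beta \neq \omega$ this yields
\[ s_\alpha^* \sigma(t)\, s_\beta = s_\alpha^* s_\alpha\, e_v\, s_\beta^* s_\beta = e_{s(\alpha)} e_v e_{s(\beta)} = e_v \neq 0, \]
since $e_v$ lies below both $e_{s(\alpha)}$ and $e_{s(\beta)}$; hence $\sigma(t) \neq 0$. The three remaining cases ($\alpha = \beta = \omega$, and the two with exactly one empty word) are the same computation with the corresponding outer factors removed, each collapsing to $e_v \neq 0$. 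I do not anticipate a serious obstacle: the only point needing care is the identification $e_{s(\alpha)} = s_\alpha^* s_\alpha$ combined with $e_v \le e_{s(\alpha)}$, which is precisely what makes the conjugation collapse cleanly to $e_v$. Everything else is routine inverse-semigroup manipulation.
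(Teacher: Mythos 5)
Your proof is correct and follows essentially the same route as the paper's: both extract $e_v \leq e_{s(\alpha)}$ and $e_v \leq e_{s(\beta)}$ from the membership $B_v \in \B_\alpha \cap \B_\beta$, and then recover $e_v \neq 0$ by sandwiching between $s_\alpha$ and $s_\beta$. The only cosmetic difference is that the paper first computes $\sigma(t)\sigma(t)^*$ and then conjugates by $s_\alpha$, whereas you multiply $\sigma(t)$ directly by $s_\alpha^*$ on the left and $s_\beta$ on the right; the underlying computation is identical.
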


\begin{proof}
    If $t = 0$, then $\sigma(t) = 0$ by defintion. Conversely, suppose $t = (\alpha, B_v,\beta) \in T_\C$ with $B_v \neq \emptyset$. We also supppose $\alpha, \beta \neq \omega$, as the cases where one or both are empty labels are similar. There exists $u,w \in \E^0$ such that $s(\alpha)  = B_w$ and $s(\beta) = B_u$. Moreover $v \in B_w \cap B_u$. That is, $e_v \leq e_w$ and $e_v \leq e_u$. Thus
\begin{align*}
\sigma(t)\sigma(t)^* &= s_\alpha e_v s_\beta^* s_\beta e_v s_\alpha^* \\
					 &=	s_\alpha e_v e_{s(\beta)} e_v s_\alpha^* \\
					 &= s_\alpha e_v e_u s_\alpha^* \\
					 &= s_\alpha e_v s_\alpha^*
\end{align*}
But then $s_\alpha^*\sigma(t)\sigma(t)^*s_\alpha = s_\alpha^* s_\alpha e_v s_\alpha^* s_\alpha = e_w e_v e_w = e_v \neq 0$. It follows that $\sigma(t) \neq 0$.
\end{proof}

\begin{prop}
    The map $\sigma: S_\C \to S$ is injective.
\end{prop}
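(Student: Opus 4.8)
The plan is to prove injectivity of $\sigma$ by showing that distinct nonzero elements of $S_\C$ have distinct images. By Proposition \ref{prop:thezerocase}, $\sigma(t) = 0$ if and only if $t = 0$, so it suffices to consider two nonzero elements $(\alpha, B_v, \beta)$ and $(\gamma, B_w, \delta)$ of $S_\C$ with $\sigma(\alpha, B_v, \beta) = \sigma(\gamma, B_w, \delta)$ and deduce that $\alpha = \gamma$, $\beta = \delta$, and $B_v = B_w$. I would treat the case $\alpha, \beta, \gamma, \delta \neq \omega$ in detail and remark that the cases involving empty labels are analogous. The governing equation is then $s_\alpha e_v s_\beta^* = s_\gamma e_w s_\delta^*$.

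First I would recover the labels $\alpha$ and $\gamma$. The strategy is to extract them using the range-$\mathcal{R}$ and range-$\mathcal{L}$ structure captured by the idempotents $\sigma(t)\sigma(t)^*$ and $\sigma(t)^*\sigma(t)$. From the computation in Proposition \ref{prop:thezerocase} we have $\sigma(t)\sigma(t)^* = s_\alpha e_v s_\alpha^*$, and since $e_v \leq e_{s(\alpha)}$ one checks that $s_\alpha^*\, (s_\alpha e_v s_\alpha^*)\, s_\alpha = e_v$, while $s_\alpha e_v s_\alpha^* \leq s_\alpha s_\alpha^* = e_\alpha$. The equality $\sigma(t) = \sigma(t')$ forces $s_\alpha e_v s_\alpha^* = s_\gamma e_w s_\gamma^*$, so both $e_\alpha$ and $e_\gamma$ dominate this common idempotent. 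I would then argue that $\alpha = \gamma$: multiplying the equation $s_\alpha e_v s_\beta^* = s_\gamma e_w s_\delta^*$ on the appropriate side by $s_\alpha$ or $s_\gamma$ and using Proposition \ref{prop:pathprods} (which computes $s_\alpha^* s_\gamma$ in terms of sublabel relations) shows that $\alpha$ and $\gamma$ must be comparable as labels, and then a maximality or idempotent argument, analogous to the proof that $e_\alpha = e_\beta$ implies $\alpha = \beta$, rules out one being a proper sublabel of the other. The symmetric computation with $\sigma(t)^*\sigma(t)$ recovers $\beta = \delta$.

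Once $\alpha = \gamma$ and $\beta = \delta$ are established, the equation reduces to $s_\alpha e_v s_\beta^* = s_\alpha e_w s_\beta^*$. Multiplying on the left by $s_\alpha^*$ and on the right by $s_\beta$ and invoking $s_\alpha^* s_\alpha = e_{s(\alpha)}$ and $s_\beta^* s_\beta = e_{s(\beta)}$ together with $e_v \leq e_{s(\alpha)}, e_{s(\beta)}$ yields $e_v = e_w$, whence $v = w$ and $B_v = B_w$ since $\mathcal{C}$ is a complete set of $\D$-class representatives. This identifies $(\alpha, B_v, \beta) = (\gamma, B_w, \delta)$ and completes the proof of injectivity.

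The main obstacle I anticipate is the step showing $\alpha = \gamma$ cleanly, since a priori the common idempotent $s_\alpha e_v s_\alpha^*$ could be dominated by both $e_\alpha$ and $e_\gamma$ without $\alpha$ and $\gamma$ being equal. The delicate point is that $s_\alpha e_v s_\alpha^*$ is strictly smaller than $e_\alpha$ when $e_v < e_{s(\alpha)}$, so one cannot simply read off $\alpha$ as ``the'' label whose associated idempotent equals a given value. I expect to resolve this by first proving that $\alpha$ and $\gamma$ are comparable via a nonvanishing product $s_\alpha^* s_\gamma \neq 0$, and then using Proposition \ref{prop:threeproperties} (2) exactly as in the earlier proposition on $e_\alpha = e_\beta$: if say $\gamma = \alpha\gamma'$ properly, the resulting idempotent inequality forces some $g \ll e_u$ to equal $e_{[g]}$, contradicting the definition of the edges. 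Handling the empty-label cases and keeping track of which side each multiplication lands on will require care but should follow the same pattern.
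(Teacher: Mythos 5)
Your proposal is correct, but it takes a different route from the paper's proof. The paper opens with a one-line reduction: since $S_\C$ is combinatorial, it suffices to check injectivity on idempotents (an idempotent-separating congruence is contained in $\mathcal{H}$, which is trivial here), so it only ever compares elements of the form $(\alpha,B_v,\alpha)$ and $(\gamma,B_u,\gamma)$ and runs the comparability-plus-coherence contradiction once. You instead treat arbitrary elements, recovering $\alpha=\gamma$ and $\beta=\delta$ separately from the two idempotents $\sigma(t)\sigma(t)^*=s_\alpha e_v s_\alpha^*$ and $\sigma(t)^*\sigma(t)=s_\beta e_v s_\beta^*$, and only then extracting $e_v=e_w$. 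The engine is the same in both arguments: a nonzero product forces $s_\alpha^* s_\gamma\neq 0$, hence comparability by Proposition \ref{prop:pathprods}, and a proper-sublabel relation is killed because it would place a nonzero element of $\C$ below some $g\ll e_u$ with $g\neq e_{[g]}$, which is impossible. What your version buys is self-containedness: the paper's reduction tacitly uses that $\sigma$ is a homomorphism, a fact the paper only proves in the \emph{following} proposition, whereas you never need it; the cost is running the contradiction argument twice and handling the mixed empty-label cases, which (as in the paper) terminate in contradictions rather than equalities, so your phrase ``deduce $\alpha=\gamma$'' should there be read as ``show the case cannot occur.'' One small citation point: the tool that forces $g=e_{[g]}$ is Definition \ref{def:coherent}(2) combined with Proposition \ref{prop:ranges}, not Proposition \ref{prop:threeproperties}(2) as you wrote---though the paper itself makes the same loose citation in its proposition characterizing when $e_\alpha = e_\beta$.
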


\begin{proof}

Since $S_\C$ is combinatorial, it suffices to prove that $\sigma$ is injective on idempotents. We may assume both idempotents are nonzero by Proposition \ref{prop:thezerocase}. Let $(\alpha, B_v, \alpha)$ and $(\gamma, B_u, \gamma)$ be nonzero idempotents in $S_\C$ such that $\sigma(\alpha, B_v, \alpha) = \sigma(\gamma, B_u, \gamma)$. There are four cases to consider. First suppose $\alpha = \gamma = \omega$. Then $e_v = \sigma((\omega, B_v, \omega)) = \sigma((\omega, B_u, \omega)) = e_u.$ So $u = v$ and $(\omega, B_v, \omega) = (\omega, B_u, \omega)$. 

Next suppose that $\alpha = \omega \neq \gamma$. Then
\begin{align*}
	e_u &= \sigma(\omega, B_u, \omega) \\
		&= \sigma(\gamma, B_v, \gamma) \\
		&= s_{\gamma} e_v s_{\gamma}^* \\ 
		&\leq s_{\gamma} s_{\gamma}^* = e_{\gamma}. 
\end{align*}
We know by Proposition \ref{prop:ranges} that $e_{\gamma} < e_w$ for some vertex $w$ and $e_{\gamma} \not\in \C$. However, by Definition \ref{def:coherent}(2), $e_u \leq e_{\gamma} < e_w$ implies that $e_{\gamma} \in \C$, a contradiction. The case where $\alpha \neq \omega = \gamma$ is similar. Finally, suppose that $\alpha, \gamma \neq \omega$. We have 
\[s_\alpha e_v s_\alpha^* = s_\gamma e_u s_\gamma^*.\] 
Since $B_v \subseteq s(\alpha)$ and $B_u  \subseteq s(\gamma)$, we know that $e_v \leq e_{s(\alpha)} = s_{\alpha}^* s_{\alpha}$ and $e_u \le e_{s(\gamma)} = s_{\gamma}^* s_{\gamma}$. Notice that $s_\alpha e_v s_\alpha^* s_\gamma e_u s_\gamma^* = s_\alpha e_v s_\alpha^* \neq 0$. Thus $s_\alpha^* s_\gamma \neq 0$. By Proposition \ref{prop:pathprods}, either $\alpha$ is a subword of $\gamma$ or $\gamma$ is a subword of $\alpha$. We want to show that $\alpha = \gamma$. Suppose for the sake of contradiction that $\alpha = \gamma \alpha'$ where $\alpha' \not = \omega$. Then we have that
\[
e_u = s_\gamma^* s_\gamma e_u s_\gamma^* s_\gamma = s_\gamma^* s_\alpha e_v s_\alpha^* s_\gamma = s_{\alpha'} e_v s_{\alpha'}^* \leq s_{\alpha'} s_{\alpha'}^*.
\]

As earlier in this proof, this leads to the contradiction that $s_{\alpha'} s_{\alpha'}^* \in \C$. We arrive at a similar contradiction in the case that $\gamma = \alpha \gamma'$ where $\gamma' \not = \omega$. Therefore $\alpha = \gamma$. Finally
\[ e_u = s_\gamma^* s_\gamma e_u s_\gamma^* s_\gamma = s_\gamma^* s_\alpha e_v s_\alpha^* s_\gamma = s_\alpha^* s_\alpha e_v s_\alpha^* s_\alpha = e_v.\]
Thus $u = v$ and we have $(\alpha, B_v, \alpha) = (\gamma, B_u, \gamma)$.
\end{proof}

\begin{prop}
    The map $\sigma: S_\C \to S$ is a homomorphism.
\end{prop}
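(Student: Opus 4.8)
The plan is to verify the identity $\sigma(t_1 t_2) = \sigma(t_1)\sigma(t_2)$ directly, by running through the cases of the multiplication rule on $S_\C$. For $t_1 = (\alpha, B_v, \beta)$ and $t_2 = (\gamma, B_u, \delta)$ the product is nonzero only when $\beta$ and $\gamma$ are comparable, giving the three cases $\gamma = \beta\gamma'$, $\beta = \gamma\beta'$, and $\beta = \gamma$; in the incomparable case the product is $0$. Since $\sigma(0) = 0$, whenever the product collapses to $0$ (because $\beta,\gamma$ are incomparable, or because the resulting source set is empty) it suffices to check that $\sigma(t_1)\sigma(t_2) = 0$.

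The computation rests on three identities. First, in the generic situation where all of $\alpha,\beta,\gamma,\delta$ lie in $\L^{\geq 1}$ we have $\sigma(t_1)\sigma(t_2) = s_\alpha e_v s_\beta^* s_\gamma e_u s_\delta^*$, and Proposition \ref{prop:pathprods} evaluates the middle term $s_\beta^* s_\gamma$ as $s_{\gamma'}$, $s_{\beta'}^*$, $e_{s(\beta)}$, or $0$ exactly according to the four cases above. Second, a commutation identity: for $B_v \in \B$ and $\mu \in \L^{\geq 1}$,
\[ e_{B_v}\, s_\mu = s_\mu\, e_{s(B_v,\mu)}, \qquad s_\mu^*\, e_{B_v} = e_{s(B_v,\mu)}\, s_\mu^*, \]
the second being the adjoint of the first. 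This follows from the letter-wise computation in the proof of Proposition \ref{prop:threeproperties}(3) together with Proposition \ref{prop:relsource}: if $r(\mu) \in B_v$ both sides equal $s_\mu$ (using $e_{s(\mu)} = s_\mu^* s_\mu$ and $s(B_v,\mu) = s(\mu)$), and if $r(\mu) \notin B_v$ both sides vanish. Third, I will use $e_A e_B = e_{A \cap B}$ for $A, B \in \B$, which is immediate from the coherence of $\C$ and the identification $B_u \cap B_v = B_{u \wedge v}$.

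With these in hand the generic case unwinds cleanly. For instance, when $\gamma = \beta\gamma'$ with $\gamma' \neq \omega$,
\[ \sigma(t_1)\sigma(t_2) = s_\alpha e_v s_{\gamma'} e_u s_\delta^* = s_\alpha s_{\gamma'}\, e_{s(B_v,\gamma')}\, e_u\, s_\delta^* = s_{\alpha\gamma'}\, e_{s(B_v,\gamma')\cap B_u}\, s_\delta^*, \]
which is exactly $\sigma(\alpha\gamma', s(B_v,\gamma')\cap B_u, \delta) = \sigma(t_1 t_2)$; if the source set is empty this expression is $0$, matching $\sigma(0)$. The case $\beta = \gamma\beta'$ is the mirror image, using the adjoint commutation identity and $s_{\beta'}^* s_\delta^* = s_{\delta\beta'}^*$, and the case $\beta = \gamma$ uses $e_v e_{s(\beta)} = e_v$ (valid since $B_v \subseteq s(\beta)$) to reduce $e_v e_{s(\beta)} e_u$ to $e_{B_v \cap B_u}$.

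Finally, the subcases in which one or more of $\alpha,\beta,\gamma,\delta$ equals $\omega$ are handled by the same computations with the corresponding $s_\omega$ or $s_\omega^*$ factor simply absent from the formula defining $\sigma$; since $\omega$ is a sublabel of every label, the comparison of $\beta$ and $\gamma$ still falls into one of the three cases, and the idempotent bookkeeping is identical. The main obstacle is not any single identity but the bookkeeping: one must confirm that pushing the two source idempotents $e_v$ and $e_u$ through the surviving path term produces precisely the intersection $s(B_v,\gamma')\cap B_u$ (respectively $B_v \cap s(B_u,\beta')$) appearing in the $S_\C$ product, and that every empty-label permutation is genuinely subsumed by the generic argument.
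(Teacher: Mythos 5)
Your proposal is correct and takes essentially the same route as the paper: the same case split on comparability of $\beta$ and $\gamma$, with Proposition \ref{prop:pathprods} evaluating $s_\beta^* s_\gamma$ and Proposition \ref{prop:threeproperties}(3) together with Proposition \ref{prop:relsource} pushing the source idempotents through the path elements --- your ``commutation identity'' $e_{B_v} s_\mu = s_\mu e_{s(B_v,\mu)}$ is just a repackaging of those two facts, and the final reduction $e_A e_B = e_{A\cap B}$ is the same use of coherence as the paper's $e_w e_u = e_{w \wedge u}$. The only cosmetic difference is that you treat the empty-source (zero) subcases uniformly via the convention $e_\emptyset = 0$, whereas the paper writes out only the nonzero subcases (e.g.\ $r(\gamma') \in B_v$) and leaves the degenerate ones implicit.
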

\begin{proof}
    Let $t_1 = (\alpha, B_v, \beta), t_2= (\gamma, B_u, \delta) \in S_\C$. We will examine the cases where $\alpha,\beta,\gamma, \delta \not =\omega$. The other cases follow similarly. First suppose that $\beta = \gamma$. Then 
    \begin{align*}
        \sigma(t_1)\sigma (t_2)& =\sigma(\alpha, B_v, \beta)\sigma (\gamma, B_u, \delta)\\
        & = s_\alpha e_v s_\beta^* (s_\gamma) e_u s_\delta^*\\
        & = s_\alpha( e_ve_u) s_\delta^*\\
        & = s_\alpha e_{v \wedge u} s_\delta^*\\
        & = \sigma(\alpha, B_{u \wedge v}, \delta)\\
        & = \sigma(\alpha, B_v \cap B_u, \delta)\\
        & = \sigma((\alpha, B_v, \beta) (\gamma, B_u, \delta)) = \sigma(t_1t_2).
    \end{align*}
Next suppose that $\gamma = \beta \gamma', \gamma' \not = \omega,$ and $r(\gamma') \in B_v$. Let $s(\gamma) = B_w$ where $w \in S/\D$.
    \begin{align*}
        \sigma(t_1)\sigma (t_2)& =\sigma(\alpha, B_v, \beta)\sigma (\gamma, B_u, \delta)\\
        & = s_\alpha e_v s_\beta^* (s_\gamma) e_u s_\delta^*\\
        & = s_\alpha (e_v s_{\gamma'}) e_u s_\delta^*\\
        & = s_\alpha (s_{\gamma'}) e_u s_\delta^*\\
        & = (s_\alpha s_{\gamma'}) e_{s(\gamma')} e_u s_\delta^*\\
        & = s_{\alpha\gamma'}(e_we_u) s_\delta^*\\
        & = \sigma (\alpha\gamma', B_{w \wedge u}, \delta)\\
        & =  \sigma (\alpha\gamma', B_w\cap B_u, \delta)\\
        & =  \sigma (\alpha\gamma', s(\gamma')\cap B_u, \delta)\\
        & = \sigma((\alpha, B_v, \beta) (\gamma, B_u, \delta)) = \sigma(t_1t_2).
    \end{align*}
The next case, $\beta = \gamma\beta', \beta' \not = \omega,$ and $r(\beta') \in B_u$, is similar to the previous case. Finally, suppose that $\beta$ and $\gamma$ are not comparable. Then 
    \[\sigma(t_1)\sigma (t_2)=\sigma(\alpha, B_v, \beta)\sigma (\gamma, B_u, \delta)=s_\alpha e_v (s_\beta^* s_\gamma) e_u s_\delta^* = 0 \]
    and \[\sigma(t_1t_2) = \sigma((\alpha, B_v, \beta) (\gamma, B_u, \delta)) = \sigma(0) = 0.\]
\end{proof}

\subsection{$S$ is Morita equivalent to $S_\C$}
We continue with our standing assumptions that $S$ is a combinatorial inverse semigroup with $0$ having finite intervals and admitting a coherent set $\C$ of idempotent representatives of the $\D$-classes of $S$.

Our next goal is to show that $S$ is Morita equivalent to the labelled graph inverse semigroup $S_\C$. We accomplish this by showing that $S$ is an enlargement of $\sigma(S_\C)$. Recall that a coherent set $\C = \{ e_u : u \in S/\D \}$ on $S$ induces a partial order $\fl$ on $S/\D$ where $u \fl v$ if and only if $e_u \leq e_v$. Moreover $(S/\D, \fl)$ is a meet semilattice with $u \wedge v = [e_u e_v]$.

Throughout we let $\sigma : S_\C \to S$ be the embedding of the inverse semigroup of the labelled graph space $(\E, \mcL, \B)$ into $S$ defined earlier in this section. We will show that 
\[ \sigma(S_\C) = \C S \C.
\]
Let $f \in E(\C S \C)$. Since $f \D e_{[f]}$, there exists an $s_f \in S$ s.t. $s_f^*s_f = f$ and $s_fs_f^* = e_{[f]}$. We can also define an isomorphism $\tau_{f} : fSf \to e_{[f]}Se_{[f]}$ by $\tau_f(x) = s_fxs_f^*$.

%

\begin{lem}\label{lem:sametau}
    Let $g \le f \le e$ for $g,f,e \in E(\C S \C)$. If $\tau_e(f) = e_{[f]}$, then $\tau_e(g) = \tau_f(g)$. 
\end{lem}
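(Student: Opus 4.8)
The plan is to reduce the entire statement to the single identity $s_e f = s_f$, after which the conclusion follows by a one-line computation. The key tool is that $S$ is combinatorial, so every $\H$-class is a singleton; equivalently, any two elements of $S$ sharing the same domain idempotent and the same range idempotent must coincide.

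First I would verify that $s_e f$ has the same domain and range idempotents as $s_f$. Using $s_e^* s_e = e$ together with $f \le e$ (so that $fe = f$), one computes
\[ (s_e f)^*(s_e f) = f s_e^* s_e f = f e f = f, \]
while the hypothesis $\tau_e(f) = s_e f s_e^* = e_{[f]}$ gives
\[ (s_e f)(s_e f)^* = s_e f f s_e^* = s_e f s_e^* = e_{[f]}. \]
Since $s_f$ is defined by the relations $s_f^* s_f = f$ and $s_f s_f^* = e_{[f]}$, the elements $s_e f$ and $s_f$ are $\H$-related, and because $S$ is combinatorial they must be equal: $s_e f = s_f$. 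Note also that $s_f^* = (s_e f)^* = f s_e^*$ since $f$ is idempotent.

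The last step uses $g \le f$ (so that $fgf = g$) and substitutes $s_f = s_e f$:
\[ \tau_f(g) = s_f g s_f^* = (s_e f) g (f s_e^*) = s_e (fgf) s_e^* = s_e g s_e^* = \tau_e(g), \]
which is exactly the claim.

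The only genuinely nontrivial point is recognizing that the hypothesis $\tau_e(f) = e_{[f]}$ is precisely what pins the range idempotent of $s_e f$ to that of $s_f$, so that the combinatorial hypothesis can force $s_e f = s_f$. Once that identification is in hand, everything reduces to routine manipulation of idempotents under the natural partial order, so I expect no serious obstacle beyond isolating this observation.
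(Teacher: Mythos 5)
Your proof is correct and is essentially identical to the paper's: both establish $s_e f = s_f$ by computing its domain and range idempotents and invoking that $S$ is combinatorial (trivial $\H$-classes), then conclude by the same substitution computation. No gaps.
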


\begin{proof}
    Let $t = s_e s_f^* s_f = s_e f$. Then $t^*t = fs_e^*s_ef = fef = f = s_f^*s_f$ and $t t^* = s_effs_e^* = s_efs_e^* = \tau_e(f) = e_{[f]} = s_fs_f^*$. Since $S$ is combinatorial, $s_f = t = s_e s_f^* s_f$. Thus $s_f \leq s_e$. It follows that 
	\[
		\tau_e(g) = s_e g s_e^* = s_e (s_f^* s_f) g (s_f^* s_f) s_e^* = s_f g s_f^* = \tau_f(g).
	\]

\end{proof}

\begin{prop}\label{prop:chain} For every idempotent $e \in E(\C S \C)$, there exists $u \in S/\D$ and $f_1,g_1,\ldots, f_n,g_n \in E(\C S \C)$ such that 
\[
e = f_1\le g_1 \ll f_2 \le g_2 \ll \cdots \ll f_n\le g_n = e_u, 
\]
where $\tau_{g_i}(f_i) = e_{[f_i]}$ for $1 \le i \le n$ and $\tau_{f_{i+1}}(g_i) \not = e_{[g_i]}$ for $1 \le i \le n-1$.
\end{prop}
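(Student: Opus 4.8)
The plan is to realize the chain as a greedy ``block decomposition'' of a single covering chain running from $e$ up to a representative, where the blocks are cut precisely at the points where a transparency condition first fails.

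First I would dispose of the trivialities. Since $\C$ is dominating there is some $e_u \in \C$ with $e \le e_u$, and since every idempotent of $S$ lies below a representative we have $E(\C S\C) = E(S)$, so $\tau_h$ and Lemma~\ref{lem:sametau} are available for every idempotent $h$ in the interval $[e,e_u]$. If $e = e_u$ (equivalently $e \in \C$) I would simply take $n = 1$ and $f_1 = g_1 = e$; here $\tau_e(e) = e_{[e]}$ holds and there are no cover steps to check. Otherwise finite intervals supplies a covering chain $e = h_0 \ll h_1 \ll \cdots \ll h_m = e_u$, and the entire construction takes place along this fixed finite chain. It is convenient to call $h \ge f$ \emph{$f$-transparent} when $\tau_h(f) = e_{[f]}$; note that $f$ is always $f$-transparent, and that because $e_u \in \C$ forces $s_{e_u} = e_u$ and hence $\tau_{e_u} = \id$, the top $e_u$ is $f$-transparent precisely when $f \in \C$.

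Next I would run the following greedy procedure. Set $p_1 = 0$; given $p_i$, let $f_i = h_{p_i}$, let $q_i$ be the \emph{largest} index $\ge p_i$ for which $h_{q_i}$ is $f_i$-transparent, and put $g_i = h_{q_i}$; if $q_i = m$ stop, otherwise set $p_{i+1} = q_i + 1$ and repeat. Each block is nonempty, since $q_i \ge p_i$ as $h_{p_i} = f_i$ is $f_i$-transparent, and $p_{i+1} > p_i$, so the process terminates, necessarily with $q_n = m$, i.e. $g_n = h_m = e_u$. This already yields $e = f_1 \le g_1 \ll f_2 \le \cdots \ll f_n \le g_n = e_u$: indeed $f_i \le g_i$ because $p_i \le q_i$, and $g_i \ll f_{i+1}$ because $f_{i+1} = h_{q_i + 1}$ covers $h_{q_i} = g_i$ in the chain.

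Finally I would check the two transparency requirements. The equalities $\tau_{g_i}(f_i) = e_{[f_i]}$ hold by the very definition of $g_i$ as an $f_i$-transparent element. For the strict inequalities, I would fix $i \le n-1$ and suppose toward a contradiction that $\tau_{f_{i+1}}(g_i) = e_{[g_i]}$; applying Lemma~\ref{lem:sametau} to $f_i \le g_i \le f_{i+1}$ would then give $\tau_{f_{i+1}}(f_i) = \tau_{g_i}(f_i) = e_{[f_i]}$, so that $h_{q_i+1} = f_{i+1}$ is $f_i$-transparent, contradicting the maximality of $q_i$. Hence $\tau_{f_{i+1}}(g_i) \ne e_{[g_i]}$. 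I expect this last step, extracting non-transparency of the cover from maximality, to be the crux: transparency need not be monotone along the covering chain, so one cannot merely climb ``while it holds,'' and it is exactly Lemma~\ref{lem:sametau} that converts maximality of a transparent block into failure of transparency at the very next cover. Termination and the order relations, by contrast, are routine once the finite covering chain is fixed.
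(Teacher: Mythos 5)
Your construction is sound, but one hypothesis you invoke is not actually available: domination of $\C$ is not among the Standing Assumptions of this section (it is a separate assumption made in Section 2 for the construction of $S^{\fl}$), so neither ``$\C$ is dominating'' nor your consequence $E(\C S\C) = E(S)$ is justified here. Both uses are easily repaired, and the repair is exactly the paper's opening move. Since $e \in E(\C S \C)$, write $e = e_{u_1} s e_{u_2}$; then $e = ee^* \leq e_{u_1}$ and $e = e^*e \leq e_{u_2}$, so $e \leq e_{u_1}e_{u_2} = e_u$ with $u = u_1 \wedge u_2$ and $e_u \in \C$ by Definition \ref{def:coherent}(1). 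Likewise you never need all of $E(S)$: every idempotent $h$ in the interval $[e, e_u]$ satisfies $h = e_u h e_u \in \C S \C$, so $s_h$, $\tau_h$, and Lemma \ref{lem:sametau} are available along your entire covering chain. With this fix your proof is correct.

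The route is genuinely different from the paper's. The paper argues by induction on the length of the covering chain: it applies the inductive hypothesis to $h_2 \ll \cdots \ll h_m$ and then either prepends the singleton block $f_1 = g_1 = e$ (when $\tau_{h_2}(e) \neq e_{[e]}$) or absorbs $e$ into the next block by setting $g_1' = g_2$ (when $\tau_{h_2}(e) = e_{[e]}$), using Lemma \ref{lem:sametau} ``forwards'' to propagate transparency from the single cover $h_2$ up to $g_2$. You instead make one greedy pass, cutting each block at the \emph{largest} transparent index, and use Lemma \ref{lem:sametau} in the contrapositive direction, converting maximality of $q_i$ into failure of transparency at the very next cover. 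The two constructions need not produce the same decomposition: when transparency is non-monotone along the chain, your blocks can leap over non-transparent intermediate levels, while the paper's block boundaries emerge from purely local (single-cover) decisions made in the recursion; both are valid witnesses for the existential statement. What your version buys is the elimination of the induction and its case analysis; what the paper's buys is that only transparency across a single cover is ever tested, whereas yours needs the global maximality argument. Your closing diagnosis is exactly right: since transparency need not persist upward, ``climb while it holds'' fails, and both proofs are organized around Lemma \ref{lem:sametau} precisely to control this.
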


\begin{proof}
    The proposition holds trivially in the case that $e = 0$, since $e = 0 \leq 0 = e_{[0]}$ and $\tau_0(0) = 0$. Let $0 \neq e \in E(\C S \C)$. Then $e = e_{u_1}se_{u_2}$ where $u_1,u_2 \in S/\D$ and $s \in S$. Notice $0 \neq e \leq e_u$ where $u = u_1 \wedge u_2$. Since $S$ admits finite intervals, there exists idempotents $h_i$ for $1 \leq i \leq m$ such that 
    \[ 0 \neq e = h_1 \ll h_2 \ll \dots \ll h_m = e_u. \]
We proceed by induction on $m$. For $m=1$ we have $e = e_u$. In this case, let $f_1 = g_1 = e = e_u$ and notice that $\tau_{g_1}(f_1) = \tau_{e}(e) = e = e_u = e_{[f_1]}$. Next suppose that claim holds for any chain of idempotents of length $k$ and that $m = k+1$. By the inductive hypothesis applied to the chain $h_2 \ll \dots \ll h_m = e_u$, there exists $f_i, g_i$ for $2 \leq i \leq n$ such that
\[
	h_2 = f_2 \leq g_2 \ll f_3 \leq g_3 \ll \dots \ll f_n \leq g_n = e_u,
\]
where $\tau_{g_i}(f_i) = e_{[f_i]}$ for $2 \le i \le n$ and $\tau_{f_{i+1}}(g_i) \not = e_{[g_i]}$ for $2 \le i \le n-1$. We consider two cases. First suppose that $\tau_{h_2}(e) \neq e_{[e]}$. In this case we let $f_1 = g_1 = e$. Then $e = f_1\le g_1 \ll f_2 \le g_2 \ll \cdots \ll f_n\le g_n = e_u$. Notice $\tau_{g_1}(f_1) = e_{[e]} = e_{[f_1]}$, $\tau_{f_2}(g_1) = \tau_{h_2}(e) \neq e_{[e]} = e_{[g_1]}$, and the remaining conditions follow from the chain obtained by the inductive hypothesis.

Next, suppose that $\tau_{h_2}(e) = e_{[e]}$. In this case, define $f'_1 = e$, $g'_1 = g_2$, $f'_i = f_{i+1}$ and $g'_i = g_{i+1}$ for $2 \leq i \leq n-1$. We have
\[
	e = f'_1 \leq g'_1 \ll f'_2 \leq g'_2 \ll \dots \ll f'_{n-1} \leq g'_{n-1} = e_u.
\]
Notice that $e = f'_1 \leq h_2 \leq g_2 = g'_1$. Since $\tau_{g_2}(h_2) = \tau_{g_2}(f_2) = e_{[f_2]}$, we have $\tau_{g'_{1}}(f'_1) = \tau_{g_{2}}(e) = \tau_{h_2}(e) = e_{[e]}$ by Lemma \ref{lem:sametau}. Also, $\tau_{f'_{2}}(g'_1) = \tau_{f_{3}}(g_2) \neq e_{[g_2]} = e_{[g'_1]}$. The remaining conditions hold for the chain obtained in the inductive hypothesis. By induction, this completes the proof.
\end{proof}

\begin{prop}\label{prop:words} For all $e \in E(\C S \C) - \C$, there exists $\alpha \in \mcL^{\geq 1}$, such that $e = s_\alpha e_{[e]} s_\alpha^*$ and $e_{[e]} \leq s_{\alpha}^* s_{\alpha}$.
\end{prop}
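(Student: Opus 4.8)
The plan is to use the chain decomposition of Proposition \ref{prop:chain} and induct on its length. It is convenient to rewrite the goal in the equivalent form
\[
	s_\alpha^* e s_\alpha = e_{[e]} \quad\text{and}\quad e \le e_\alpha,
\]
which is equivalent to the stated conclusion because $e_{[e]} \le s_\alpha^* s_\alpha$ forces $s_\alpha e_{[e]} s_\alpha^* = e_\alpha e e_\alpha$, and this equals $e$ exactly when $e \le e_\alpha$. In this form the key observation is that for a letter $x = (v,p)$ one has $s_x = s_p^*$, so $y \mapsto s_x^* y s_x$ is exactly $\tau_p$; conjugating $e$ by $s_\alpha^*$ therefore amounts to composing the maps $\tau_p$ over the letters $p$ of $\alpha$, and the task is to arrange these compositions to carry $e$ up to its representative $e_{[e]}$.

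The main auxiliary fact is a composition law for the maps $\tau$: if $g \le g'$ in $E(\C S\C)$ and $g'' := \tau_{g'}(g)$, then $s_{g''} s_{g'} = s_g$, hence $\tau_{g''} \circ \tau_{g'} = \tau_g$ on $gSg$. I would prove this by computing $(s_{g''} s_{g'})^*(s_{g''} s_{g'}) = g$ and $(s_{g''} s_{g'})(s_{g''} s_{g'})^* = e_{[g]}$ and invoking that $S$ is combinatorial, so $s_{g''}s_{g'}$ must be $s_g$. I will also record two routine consequences of combinatoriality: first, $\tau_{g'}$ is an isomorphism $g'Sg' \to e_{[g']} S e_{[g']}$, hence preserves $\le$, $\ll$ and $\D$-classes; second, whenever $f = s_\beta e_{[f]} s_\beta^*$ with $e_{[f]} \le s_\beta^* s_\beta$ one has $s_\beta^* f = s_f$, so $y \mapsto s_\beta^* y s_\beta$ restricts to $\tau_f$ on $f S f$.

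Now fix $e \in E(\C S\C) - \C$ together with a chain $e = f_1 \le g_1 \ll f_2 \le g_2 \ll \cdots \ll f_n \le g_n = e_u$ as in Proposition \ref{prop:chain}; the case $n=1$ would give $\tau_{g_1}(f_1)=e_{[f_1]}$ with $g_1=e_u\in\C$, forcing $e=f_1=e_{[e]}\in\C$, so $e\notin\C$ yields $n\ge2$. Put $g_1' := \tau_{f_2}(g_1)$. The jump condition $\tau_{f_2}(g_1) \ne e_{[g_1]}$ says $g_1' \notin \C$, and since $g_1 \ll f_2$ and $\tau_{f_2}$ preserves $\ll$ with $\tau_{f_2}(f_2) = e_{[f_2]}$ we get $g_1' \ll e_{[f_2]}$; hence $x := ([f_2], g_1')$ is a genuine letter. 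I set $\alpha := \beta x$, where $\beta$ is the empty word if $f_2 \in \C$ (so $\alpha = x$, using the convention $s_\omega = f_2$) and otherwise is the word produced by the inductive hypothesis applied to $f_2$, which admits the truncated chain $f_2 \le g_2 \ll \cdots \le g_n = e_u$ of length $n-1$; in either case $f_2 = s_\beta e_{[f_2]} s_\beta^*$ with $e_{[f_2]} \le s_\beta^* s_\beta$.

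To finish I would compute $s_\beta^* e s_\beta = \tau_{f_2}(e)$ (using $e \le f_2 \le s_\beta s_\beta^*$ and the restriction fact above), whence
\[
	s_\alpha^* e s_\alpha = s_x^* \tau_{f_2}(e)\, s_x = \tau_{g_1'}\bigl(\tau_{f_2}(e)\bigr) = \tau_{g_1}(e) = e_{[e]},
\]
the third equality being the composition law with $g = g_1 \le f_2 = g'$ and the last the chain condition $\tau_{g_1}(e)=e_{[e]}$. For the inequality I compute the range idempotent $e_\alpha = s_\beta\, g_1'\, s_\beta^* = \tau_{f_2}^{-1}(g_1') = g_1$, so $e \le g_1 = e_\alpha$ and therefore $e = s_\alpha e_{[e]} s_\alpha^*$; moreover $e_{[e]} = \tau_{g_1}(e) \le \tau_{g_1}(g_1) = e_{[g_1]} = s_\alpha^* s_\alpha$, and $e_{[f_2]} \le s_\beta^* s_\beta$ gives $[f_2] \in s(\beta)$, so by Proposition \ref{prop:relsource} the concatenation $\beta x$ is a legitimate labelled path with $s(\alpha)=B_{[g_1]}$, placing $\alpha \in \L^{\ge 1}$. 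The main obstacle is the composition law and the attendant bookkeeping: one must track which corner each $\tau$ acts on, use combinatoriality repeatedly to identify composite partial isometries with the canonical $s_g$, and confirm that the appended letter concatenates with $\beta$ into a path of the correct source. The identity $e_\alpha = g_1$ is the crucial payoff, since it is exactly what upgrades $s_\alpha^* e s_\alpha = e_{[e]}$ to the required equation $e = s_\alpha e_{[e]} s_\alpha^*$.
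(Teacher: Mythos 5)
Your proposal is correct and takes essentially the same route as the paper's proof: both invoke Proposition \ref{prop:chain}, rule out chains of length one, build the word from the letters $([f_{i+1}],\tau_{f_{i+1}}(g_i))$, and induct on the chain length, using combinatoriality to identify composite partial isometries with the canonical elements $s_g$. Your composition law $s_{g''}s_{g'}=s_g$ (equivalently $\tau_{g''}\circ\tau_{g'}=\tau_g$) is a clean repackaging of the paper's key identities $s_{f_2}=e_{[f_2]}s_{\alpha'}^*$, $e_\alpha = g_1$, and $s_{g_1}=s_\alpha^*$.
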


\begin{proof}
    Let $e \in E(\C S \C) - \C$, thus $e \not =0 $ and $e_{[e]} \not = 0$. By Proposition \ref{prop:chain}, we know there exists $u \in S/D$, $f_1,g_1,\ldots, f_n,g_n \in \C S \C$ such that 
    \[e = f_1 \le g_1 \ll f_2 \le g_2 \ll \cdots \ll f_n \le g_n = e_u,\]
    where $\tau_{g_i}(f_i) = e_{[f_i]}$ for $i = 1,\ldots, n$ and $\tau_{f_{i+1}}(g_i) \not = e_{[g_i]}$ for $i = 1,\ldots, n-1$. We claim $n \geq 2$. If not, then $e = f_1 \le g_1 = e_u$. But $\tau_{g_1}(x) = \tau_{e_u}(x) = e_uxe_u$, and thus $e_{[e]} = \tau_{e_u}(e) = e_u e e_u = e_u e$. This implies that $0 \not =e_{[e]} \le e \le e_u$ and hence $e \in \C$, a contradiction.
  
So $n \geq 2$. Notice that since $g_i \ll f_{i+1}$, we have that 
    \[
    \tau_{f_{i+1}}(g_i) \ll \tau_{f_{i+1}}(f_{i+1}) = e_{[f_{i+1}]}
    \] 
for $1 \leq i \leq n-1$. Since $\tau_{f_{i+1}}(g_i) \not = e_{[g_i]}$, $x_i = ([f_{i+1}], \tau_{f_{i+1}}(g_i), [g_i]) \in \E^1$. Since $f_{i+1} \le g_{i+1},$ it follows that 
   \[ e_{[f_{i+1}]} = \tau_{g_{i+1}}(f_{i+1}) \le \tau_{g_{i+1}}(g_{i+1}) = e_{[g_{i+1}]}.
   \] 
Thus $[f_{i+1}] \le [g_{i+1}] \in B_{[g_{i+1}]}$. It follows that $x_{n-1}x_{n-2}\ldots x_1$ is a path of length $n-1$. Let $a_i = ([f_{i+1}], \tau_{f_{i+1}}(g_i))$ for $1 \leq i \leq n-1$. Then $\alpha = a_{n-1}a_{n-2}\ldots a_1 \in \mcL^{\geq 1}$.
    
We will now show by induction on $n$ that $e = s_\alpha e_{[e]}s_\alpha^*$. First suppose that $n=2$. Since $g_2 = e_u$ we have that $f_2 = \tau_{g_2}(f_2) = e_{[f_2]}$. Then $\alpha = ([f_2], \tau_{f_2}(g_1)) = ([f_2], g_1)$. Notice that $s_\alpha s_\alpha^* = g_1$ and $s_\alpha^*s_\alpha = e_{[g_1]}$, it follows that $s_{g_1} = s_\alpha^*$ and 
    \[e = f_1 = \tau^{-1}_{g_1}(e_{[f_1]}) = s_\alpha e_{[f_1]}s_\alpha^* = s_\alpha e_{[e]}s_\alpha^*.\]
    
Now suppose that $n \geq 3$ and write $\alpha = \alpha' a_1$. We have $f_2 = s_{\alpha'} e_{[f_2]} s_{\alpha'}^*$ by induction. We claim that $g_1 = s_\alpha s_\alpha^*$. First, let $t = e_{[f_2]}s_{\alpha'}^*$. We will show $t = s_{f_2}$. Indeed, $tt^* = e_{[f_2]}s_{\alpha'}^*s_{\alpha'} e_{[f_2]} = e_{[f_2]}$ and 
\[
t^*t = s_{\alpha'} e_{[f_2]} e_{[f_2]} s_{\alpha'}^* = s_{\alpha'} e_{[f_2]} s_{\alpha'}^* = f_2. 
\]
So $s_{f_2} = e_{[f_2]}s_{\alpha'}^*$. Thus
    \begin{align*}
        \tau_{f_2}(s_\alpha s_\alpha^*) & = s_{f_2} (s_\alpha s_\alpha^*) s_{f_2}^*\\
        & = e_{[f_2]}s_{\alpha'}^* (s_\alpha) (s_\alpha^*) s_{\alpha'} e_{[f_2]}\\
        & = e_{[f_2]}s_{\alpha'}^* s_{\alpha'}s_{a_1} s_{a_1}^* s_{\alpha'}^* s_{\alpha'} e_{[f_2]}\\
        & = e_{[f_2]}s_{a_1} s_{a_1}^* e_{[f_2]}\\
        & = e_{[f_2]}\tau_{f_2}(g_1) e_{[f_2]}\\
        & = \tau_{f_2}(g_1).
    \end{align*}
    So $g_1 = s_\alpha s_\alpha^*$ as desired. It also follows that $s_{g_1} = s_\alpha^*$. Therefore $e = f_1 = \tau_{g_1}^{-1}(e_{[f_1]}) = s_\alpha e_{[f_1]}s_\alpha^* = s_\alpha e_{[e]}s_\alpha^*$. We have shown that, $e = s_\alpha e_{[e]}s_\alpha^*$ for all $e \in E(\C S \C) - \C$. Finally, note that $e_{[e]} = e_{[f_1]} = \tau_{g_1}(f_1) \leq e_{[g_1]} = s_{\alpha}^* s_{\alpha}$.
\end{proof}

%

\begin{thm}
    For the embedding $\sigma : S_\C \to S$, we have $\sigma(S_\C) = \C S \C$.
\end{thm}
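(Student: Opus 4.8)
The plan is to prove the set equality $\sigma(S_\C) = \C S \C$ by establishing both inclusions, with the forward inclusion being routine and the reverse inclusion constituting the main work.

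For the inclusion $\sigma(S_\C) \subseteq \C S \C$, I would argue directly from the definition of $\sigma$. A typical nonzero element of $S_\C$ has the form $(\alpha, B_v, \beta)$, which $\sigma$ sends to $s_\alpha e_v s_\beta^*$ (or one of the degenerate cases when $\alpha$ or $\beta$ equals $\omega$). The key observation is that $e_v \in \C$, so we may write $e_v = e_v e_v e_v$ and absorb the outer copies appropriately: since $\sigma(\alpha,B_v,\beta) = s_\alpha e_v s_\beta^* = (s_\alpha e_v)e_v(e_v s_\beta^*)$, and $e_v \in \C$, this element lies in $\C S \C$. In the degenerate cases ($e_v$, $s_\alpha e_v$, $e_v s_\beta^*$) the same reasoning applies immediately. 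This direction requires only unwinding definitions.

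The reverse inclusion $\C S \C \subseteq \sigma(S_\C)$ is the heart of the matter, and here I would lean heavily on Proposition \ref{prop:words}. Since $\sigma$ is a homomorphism and $\C S \C$ is generated as an inverse semigroup by its idempotents together with elements realizing the $\D$-class isomorphisms, it suffices to show that a general element $t = e_1 s e_2 \in \C S \C$ (with $e_1, e_2 \in \C$) lies in $\sigma(S_\C)$; by taking $\R$- and $\L$-classes, $t$ can be written in terms of idempotents of $\C S \C$ and elements $s_f$ implementing $\D$-equivalences. For an idempotent $e \in E(\C S \C)$, I would split into two cases: if $e \in \C$, then $e = e_{[e]} = \sigma(\omega, B_{[e]}, \omega)$, so $e \in \sigma(S_\C)$; if $e \in E(\C S \C) - \C$, then Proposition \ref{prop:words} furnishes a labelled path $\alpha \in \L^{\geq 1}$ with $e = s_\alpha e_{[e]} s_\alpha^*$ and $e_{[e]} \leq s_\alpha^* s_\alpha = e_{s(\alpha)}$, which means $[e] \in B_{s(\alpha)}$ (equivalently $B_{[e]} \subseteq s(\alpha)$), so that $(\alpha, B_{[e]}, \alpha)$ is a legitimate idempotent of $S_\C$ and $\sigma(\alpha, B_{[e]}, \alpha) = s_\alpha e_{[e]} s_\alpha^* = e$.

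To finish, I would assemble a general element of $\C S \C$ from these idempotent pieces. Given $0 \neq t \in \C S \C$, both $tt^*$ and $t^*t$ are idempotents in $E(\C S \C)$, so by the above each equals $\sigma$ of an idempotent of $S_\C$, say $tt^* = \sigma(\alpha, B_v, \alpha)$ and $t^*t = \sigma(\beta, B_w, \beta)$; one checks using Lemma \ref{lem:equivalence} and Proposition \ref{prop:flD} that $v = w$ (the $\D$-classes of the source idempotents must agree since $tt^* \D t^*t$), and then $t = s_\alpha e_v s_\beta^* = \sigma(\alpha, B_v, \beta)$ because $S$ is combinatorial and this element has the correct left and right idempotents. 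The main obstacle I anticipate is verifying this last matching step cleanly: one must confirm that the element produced by $\sigma(\alpha, B_v, \beta)$ is exactly $t$ and not merely $\D$-related or $\mathcal{H}$-equivalent to it, which is where combinatoriality of $S$ (so that $\H$-classes are trivial) does the essential work, pinning down $t$ uniquely from its left and right idempotents.
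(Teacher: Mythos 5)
Your argument for the forward inclusion $\sigma(S_\C) \subseteq \C S \C$ contains a genuine error. The factorization $s_\alpha e_v s_\beta^* = (s_\alpha e_v)\, e_v\, (e_v s_\beta^*)$ exhibits the element as a product of the form (element of $S$)(element of $\C$)(element of $S$); this proves membership in $S \C S$, not in $\C S \C$, which consists of products with elements of $\C$ on the \emph{outside}. The same defect occurs in your degenerate cases. The step actually needed, and the one the paper supplies, is to find idempotents of $\C$ dominating $tt^*$ and $t^*t$ for $t = \sigma(\alpha, B_v, \beta)$: writing $\beta = x_1\cdots x_n$ with $x_1 = (u,f)$, one has $t^*t \leq s_\beta s_\beta^* = e_\beta \leq f \ll e_u = e_{r(\beta)}$ (as in the proof of Proposition \ref{prop:ranges}), and similarly $tt^* \leq e_{r(\alpha)}$; since $e_{r(\alpha)}, e_{r(\beta)} \in \C$, this gives $t = e_{r(\alpha)}\, t\, e_{r(\beta)} \in \C S \C$. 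So this direction is not pure definition-unwinding: it requires locating the vertex idempotents above $e_\alpha$ and $e_\beta$.

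Your reverse inclusion---the substantive direction---follows essentially the paper's route: realize each idempotent of $E(\C S \C)$ either as $\sigma(\omega, B_{[e]}, \omega)$ when $e \in \C$, or via Proposition \ref{prop:words} as $\sigma(\alpha, B_{[e]}, \alpha)$, and then reconstruct a general nonzero $t$ from $tt^*$ and $t^*t$, pinning it down by triviality of $\H$-classes. (The paper organizes this as four cases according to whether $tt^*$, $t^*t$ lie in $\C$, but your uniform treatment amounts to the same thing, with $\alpha$ or $\beta$ equal to $\omega$ in the degenerate cases.) Two small repairs there: the equality $v = w$ needs no appeal to Lemma \ref{lem:equivalence} or Proposition \ref{prop:flD}, which concern equivalence functors and $S^{\fl}$ and are not applicable; it follows simply because $t$ itself witnesses $tt^* \D t^*t$, so both middle entries are $e_{[t]}$. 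Also, the check that $\sigma(\alpha, B_{[t]}, \beta)$ shares its left and right idempotents with $t$ rests on the clause $e_{[t]} \leq s_\alpha^* s_\alpha$ and $e_{[t]} \leq s_\beta^* s_\beta$ from Proposition \ref{prop:words}; this should be invoked explicitly rather than gestured at, since it is exactly what makes the computation $s_\alpha e_{[t]} s_\beta^* s_\beta e_{[t]} s_\alpha^* = s_\alpha e_{[t]} s_\alpha^* = tt^*$ go through.
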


\begin{proof}
Let $(\alpha, B_v, \beta) \in S_\C$. Suppose $\alpha,\beta \not = \omega$. The other cases have similar proofs. Let $t = \sigma(\alpha,B_v,\beta) = s_\alpha e_v s_{\beta}^*$. Then 
    \[t^*t \leq s_\beta s_\beta^* = e_\beta \leq e_{r(\beta)}.\]
Similarly, $tt^* \leq e_{r(\alpha)}$. Thus $\sigma(\alpha,B_v,\beta) = t = e_{r(\alpha)} t e_{r(\beta)} \in \C S \C$. So $\sigma(S_\C) \subseteq \C S \C$.

Next, let $t \in \C S \C$. If $t = 0$, then $t = \sigma(0)$. Otherwise, there are four cases to consider. First suppose $t^*t \in \C$ and $tt^* \in \C$. Since $\C$ contains exactly one representative from each $\mathcal{D}$-class, $t^*t = t t^* = e_{[t]}$. Thus $t = e_{[t]} \in \C S \C$. 

Suppose that $t^*t \in \C$ and $tt^* \not \in \C$. Then $t^* t = e_{[t^* t]}$ and by Proposition \ref{prop:words}, $t t^* = s_{\alpha} e_{[tt^*]} s_{\alpha}^*$ for some $\alpha \in \mcL^{\geq 1}$ and $e_{[t t^*]} \leq s_{\alpha}^* s_{\alpha}$. Notice that $(s_{\alpha} e_{[tt^*]})(s_{\alpha} e_{[tt^*]})^* = s_{\alpha} e_{[tt^*]} s_{\alpha}^* = tt^*$ and 
\[
(s_{\alpha} e_{[tt^*]})^* (s_{\alpha} e_{[tt^*]})= e_{[tt^*]} s_{\alpha}^* s_{\alpha} = e_{[tt^*]} = e_{[t^*t]} = t^*t.
\]
Thus $t = s_{\alpha} e_{[tt^*]} = \sigma( (\alpha, B_{[tt^*]}, \omega))$, as desired. The proof is very similar in the case that $t^*t \not \in \C$ and $tt^* \in \C$.

Finally, suppose that $t^*t \not \in \C$ and $tt^* \not \in \C$. Then there exists $\alpha, \beta \in \mcL^{\geq 1}$ such that $tt^* = s_\alpha e_{[tt^*]} s_\alpha^*$ and $t^* t = s_\beta e_{[t^*t]} s_\beta^*$ with $e_{[t^*t]} \leq s_{\beta}^* s_{\beta}$ and $e_{[tt^*]} \leq s_{\alpha}^* s_{\alpha}$. Also notice $e_{[t^*t]} = e_{[tt^*]} = e_{[t]}$. We then have 
\[
    \sigma(\alpha, B_{[t]}, \beta) \sigma(\alpha, B_{[t]}, \beta)^* = s_\alpha e_{[t]} s_\beta^* s_\beta e_{[t]}s_\alpha^* = s_\alpha e_{[t]} s_\alpha^*= tt^*
\]
and similarly $\sigma(\alpha, B_{[t]}, \beta)^* \sigma(\alpha, B_{[t]}, \beta) = t^*t$. Therefore $t = \sigma(\alpha, B_{[t]}, \beta)$.
\end{proof}

Using Proposition \ref{prop:enlargement}, we now have the following result.

\begin{thm}\label{thm:labelledgraph} Suppose that $S$ is a combinatorial inverse semigroup with $0$ having finite intervals and admitting a coherent set $\C$ of idempotent representatives of the $\D$-classes of $S$. Then $S$ is Morita equivalent to the labelled graph inverse semigroup $S_\C$.
\end{thm}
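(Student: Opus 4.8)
The plan is to assemble the pieces already established in this section into a short Morita equivalence argument via enlargement. The key observation is that all the hard work has been front-loaded into the preceding results: we have an explicit map $\sigma : S_\C \to S$, and we have shown (in the preceding propositions and the theorem immediately above) both that $\sigma$ is an injective homomorphism and that its image is exactly $\C S \C$. So the remaining task is purely structural, invoking the general enlargement machinery from Section 2.

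First I would recall that $\C = \{e_v : v \in S/\D\}$ is by hypothesis a coherent set, and in particular a complete set of idempotent representatives of the $\D$-classes of $S$. The \emph{dominating} property required by Proposition \ref{prop:enlargement} is automatic here: for any idempotent $e \in E(S)$ we have $e \leq e_{[e]} \in \C$, so $\C$ meets every $\D$-class and sits below every idempotent. Hence Proposition \ref{prop:enlargement} applies directly and yields that $S$ is Morita equivalent to the inverse semigroup $\C S \C$.

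Next I would combine the injectivity proposition and the homomorphism proposition to conclude that $\sigma$ is an injective semigroup homomorphism, hence an isomorphism onto its image $\sigma(S_\C)$. By the theorem immediately preceding this statement, $\sigma(S_\C) = \C S \C$. Therefore $S_\C \cong \C S \C$ as inverse semigroups. Chaining this isomorphism with the Morita equivalence $S \sim \C S \C$ from the previous paragraph—and using that isomorphic inverse semigroups are Morita equivalent, since they have equivalent (indeed isomorphic) idempotent-splitting categories—gives $S$ Morita equivalent to $S_\C$, as desired.

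I do not expect a genuine obstacle in this final step: the substance of the argument lives in establishing that $\sigma$ is a well-defined injective homomorphism and that it surjects onto $\C S \C$, all of which is completed above. The only point requiring a moment's care is verifying the dominating hypothesis of Proposition \ref{prop:enlargement}, but as noted this is immediate from $e \leq e_{[e]}$. Thus the proof is essentially a one-line invocation: apply Proposition \ref{prop:enlargement} to get $S \sim \C S \C$, identify $\C S \C$ with $S_\C$ via $\sigma$, and conclude.
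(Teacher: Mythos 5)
Your proposal is correct and is essentially the paper's own proof: the paper disposes of this theorem with the single line ``Using Proposition \ref{prop:enlargement}, we now have the following result,'' relying exactly as you do on $S \sim \C S \C$ (enlargement) together with $S_\C \cong \sigma(S_\C) = \C S \C$ from the preceding propositions and theorem.

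One small correction to your writeup: Proposition \ref{prop:enlargement} has no ``dominating'' hypothesis --- it only requires that $\C$ contain a representative of each $\D$-class, which is part of the definition of a coherent set. Your attempted verification of domination, namely that $e \leq e_{[e]}$ for every idempotent $e$, is in fact false in general: an idempotent need not lie below (or even be comparable to) its own $\D$-class representative; for instance, in a graph inverse semigroup with an edge $x$ from $u$ to $v$ and $\C$ the vertex idempotents, $xx^*$ is $\D$-related to $u$ but orthogonal to it. Domination (there exists \emph{some} $v$ with $e \leq e_v$, not necessarily $v = [e]$) is an extra assumption used only for the McAlister-function construction of $S^{\fl}$ in Section 2, and it plays no role here, so this slip does not affect the validity of your argument.
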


\section{Inverse Hulls, Cores, and Labelled Graphs}

We start this section with a brief review of the inverse semigroup associated to a Markov shift by Starling \cite{StarlingShifts}. We will rely on a number of structural properties proved in \cite{MarkovShift}. Those properties are summarized below. We define a partial order on the $\D$-classes of such semigroups using the idea of a core set of idempotents. Given a finite alphabet $A$ and a matrix $T = \{T_{a,b}\}_{a,b \in A}$ with $T_{a,b} \in \{0,1\}$ for each $a,b \in A$, such that each row of $T$ has a nonzero entry, we refer to $T$ as a \emph{Markov transition matrix}. Associated to $T$ is a semigroup $L_T \cup \{0\}$, where $L_T$ is the set of allowable words defined by $T$ and multiplication is given by 
\[
    x*y = \begin{cases}
            xy & \text{if $xy \in L_T$} \\
            0 & \text{otherwise}
    \end{cases}
\]
For each $a \in A$ there is a partial bijection
\[
    \theta_a : \{w \in L_T : aw \in L_T\} \to \{v \in L_T : v = aw \text{ for $w \in L_T$} \} 
\]
defined by $\theta_a(w) = aw$. The \emph{inverse hull of $T$} is the inverse semigroup $H(T)$ generated by the partial bijections $\{ \theta_a : a \in A\} \cup \{0\}$. The following is proved in \cite{MarkovShift}.

\begin{prop}
    Let $T$ be a Markov transition matrix over a finite alphabet $A$. Then $H(T)$ is a combinatorial inverse semigroup with $0$. Moreover, the set of idempotents $\mcO = \{\theta_a\theta_a^{-1}: a \in A \}$ satisfies the following properties:
    \begin{itemize}
        \item [(O1)] the elements of $\mcO$ are mutually orthogonal,
        \item [(O2)] every idempotent in $H(T)$ is comparable to some element of $\mcO$,
        \item [(O3)] both $\mcO^\uparrow \cup \{0\}$ and $(\mcO^\uparrow - \mcO) \cup \{0\}$ are closed under multiplication,
        \item [(O4)] elements of $\mcO^\uparrow - \mcO$ are uniquely determined by the set of idempotents in $\mcO$ that they lie above in the natural partial order, and
        \item [(O5)]  for each $e \in \mcO$, the $\D$-class of $e$ contains at most one element of $\mcO^\uparrow - \mcO$.
    \end{itemize}
\end{prop}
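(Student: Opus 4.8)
The plan is to take the Markov transition matrix $T$ over the finite alphabet $A$ and verify the five listed properties of $\O = \{\theta_a\theta_a^{-1} : a \in A\}$ directly from the concrete description of $H(T)$ as partial bijections on $L_T$. The first thing I would do is identify the idempotents of $H(T)$ concretely. Since $\theta_a(w) = aw$, the domain idempotent $\theta_a^{-1}\theta_a$ is the identity on $\{w \in L_T : aw \in L_T\}$, while the range idempotent $\theta_a\theta_a^{-1}$ is the identity on the set of words in $L_T$ beginning with $a$. More generally, a product of the generators and their inverses gives a partial bijection of the form ``strip a prefix $u$ and replace it with a prefix $v$,'' so every idempotent is the identity map on a set of the form $\{w \in L_T : w \text{ begins with } u\}$ for some allowable word $u$ (the \emph{cylinder set} of $u$). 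I would make this cylinder-set picture precise first, since essentially every property below is a statement about containment and disjointness of cylinder sets.

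With that picture in hand, the individual properties become combinatorial. For (O1), two range idempotents $\theta_a\theta_a^{-1}$ and $\theta_b\theta_b^{-1}$ with $a \neq b$ have disjoint supports, since a word cannot begin with two distinct letters, so their product is $0$, giving mutual orthogonality. For (O2), given any idempotent $e$ supported on the cylinder of an allowable word $u = a u'$, the support of $e$ is contained in the cylinder of its first letter $a$, so $e \leq \theta_a\theta_a^{-1} \in \O$; thus every idempotent lies below some element of $\O$ and in particular is comparable to one. For (O3), I would observe that idempotents above elements of $\O$ correspond to cylinder sets of words of length $\geq 1$ (equivalently, their support is contained in some $\theta_a\theta_a^{-1}$), and that a nonzero product of two such cylinder idempotents is again a cylinder idempotent (the support of a product of commuting idempotents is the intersection of supports, and a nonempty intersection of cylinders is a cylinder); the same closure argument restricted to the strictly-above-$\O$ elements handles $(\O^\uparrow - \O) \cup \{0\}$, with the key point that an intersection of cylinders of words of length $\geq 2$, if nonempty, is again such a cylinder. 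For (O4), an element of $\O^\uparrow - \O$ is an idempotent strictly above some generator idempotent, and I would argue via the cylinder description that such an idempotent is determined by exactly which $\theta_a\theta_a^{-1}$ it dominates, i.e. by which letters can begin a word in its support.

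The property I expect to require the most care is (O5): that each $\D$-class of a generator idempotent $e \in \O$ contains at most one element of $\O^\uparrow - \O$. Here I would use that two idempotents are $\D$-related in $H(T)$ exactly when there is a partial bijection in $H(T)$ carrying the support of one onto the support of the other, which in the cylinder language means the ``prefix-replacement'' maps must send one cylinder set bijectively onto the other. The subtlety is that $\D$-equivalence can relate a length-one cylinder $\theta_a\theta_a^{-1}$ to some larger cylinder idempotent in $\O^\uparrow - \O$, and I must rule out two distinct such larger idempotents being simultaneously $\D$-related to the same $e$; this comes down to a rigidity argument showing that the prefix-replacement bijections are tightly constrained by $T$, so that the $\D$-class of $e$ can meet $\O^\uparrow - \O$ in at most one point. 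I would prove this by analyzing the form of an element $s \in H(T)$ with $s^*s = e$ and $ss^* \in \O^\uparrow - \O$, showing the target cylinder is forced. The remaining properties are routine once the cylinder-set dictionary is established; the whole proof is a translation from the partial-bijection semigroup into elementary statements about prefixes of allowable words, and the only genuinely delicate bookkeeping is in (O5).
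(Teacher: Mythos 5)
Your strategy of verifying (O1)--(O5) directly from the action of $H(T)$ on $L_T$ is a legitimate route (the paper itself gives no proof of this proposition; it is quoted from \cite{MarkovShift}), but the cylinder-set dictionary on which all of your verifications rest is false, and false in exactly the way that erases the content of the statement. It is not true that every idempotent of $H(T)$ is the identity on a set of the form $\{w \in L_T : w \text{ begins with } u\}$. The domain idempotent $\theta_a^{-1}\theta_a$ is the identity on $\{w \in L_T : aw \in L_T\}$, which is the \emph{union} of the cylinders of all letters $b$ with $T_{a,b}=1$; when row $a$ of $T$ has more than one nonzero entry this is not a cylinder. In general an idempotent has the normal form $\theta_s\theta_{x_1}^{-1}\theta_{x_1}\cdots\theta_{x_n}^{-1}\theta_{x_n}\theta_s^{-1}$ (the identity on words with prefix $s$ whose next letter $b$ satisfies $T_{x_i,b}=1$ for all $i$), and the set $\O^\uparrow - \O$ that (O3)--(O5) are about consists precisely of the products $\theta_{x_1}^{-1}\theta_{x_1}\cdots\theta_{x_n}^{-1}\theta_{x_n}$, which are not cylinder identities. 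A second conflation: the range of $\theta_b$ is $\{bw : w \in L_T\}$, so $\theta_b\theta_b^{-1}$ is the identity on words of length at least two beginning with $b$, strictly smaller than the cylinder of $b$; this is why, in the paper's example, $\theta_b\theta_b^{-1}$ sits strictly below $\theta_a^{-1}\theta_a\theta_b^{-1}\theta_b$. In that example (rows $a=(1,1,0)$, $b=(0,1,1)$, $c=(1,1,1)$), your picture has no room for $\theta_a^{-1}\theta_a$, $\theta_c^{-1}\theta_c$, or the diamond element $\theta_a^{-1}\theta_a\theta_b^{-1}\theta_b$ --- that is, for the entire top of the displayed semilattice, which is the structure the whole paper is built to capture.

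Each later step inherits this error. For (O2) you argue that every idempotent lies \emph{below} some element of $\O$; but $\theta_a^{-1}\theta_a$ lies \emph{above} the elements $\theta_b\theta_b^{-1}$ with $T_{a,b}=1$, and when row $a$ has two nonzero entries it is below no element of $\O$ at all --- the word ``comparable'' in (O2) is there because both directions genuinely occur, according to whether the prefix $s$ in the normal form is empty or not. For (O3)--(O4) you describe $\O^\uparrow$ as the idempotents whose support is \emph{contained in} some $\theta_a\theta_a^{-1}$: that is $\O^\downarrow$, the order inverted; read correctly, (O4) is the nontrivial assertion that $\theta_{x_1}^{-1}\theta_{x_1}\cdots\theta_{x_n}^{-1}\theta_{x_n}$ is determined by the set $\{b \in A : T_{x_i,b}=1 \text{ for all } i\}$, which your dictionary renders vacuous rather than proves. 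For (O5), the rigidity needed is not about prefix-replacement maps between cylinders: the workable argument (which the paper does carry out for the proposition immediately following this one) writes an element $s$ with $s^*s = e$ and $ss^* = f$ in the normal form $\theta_v\theta_{z_1}^{-1}\theta_{z_1}\cdots\theta_{z_k}^{-1}\theta_{z_k}\theta_w^{-1}$ and deduces from the resulting expressions for $s^*s$ and $ss^*$ that $v$ and $w$ are forced to be empty. A repaired proof should therefore begin by establishing this normal form for elements and idempotents of $H(T)$ --- that is where the real work lies --- after which (O1)--(O5) do reduce to finite combinatorics over $T$, but combinatorics of rows of $T$ and sets of admissible following letters, not of prefixes.
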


We also have that 
\[
(\mcO^\uparrow - \mcO) = \{\theta_{a_1}^{-1}\theta_{a_1}\cdots\theta_{a_n}^{-1}\theta_{a_n}: a_i \in A \text{ for $1 \leq i \leq n$}\}.
\]
Every element in $H(X)$ can be written in the form 
\[
\theta_s\theta_{x_1}^{-1}\theta_{x_1}\cdots\theta_{x_n}^{-1}\theta_{x_n}\theta_w^{-1},
\]
for some $n\geq 1$, $x_i \in A$, and $s,w \in L_T^1$. Such an element is idempotent if $s = w$. 
\begin{prop}
    For every nonzero $\D$-class of $H(T)$, $\mcO^\uparrow - \mcO$ contains exactly one element in that class.
\end{prop}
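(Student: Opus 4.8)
The plan is to show that every nonzero $\D$-class of $H(T)$ contains exactly one element of $\O^\uparrow - \O$ by establishing existence and uniqueness separately. For \emph{existence}, I would start from the general form of an element noted just above the statement: every nonzero idempotent of $H(T)$ can be written as $e = \theta_s \theta_{x_1}^{-1}\theta_{x_1}\cdots \theta_{x_n}^{-1}\theta_{x_n}\theta_s^{-1}$ for some $s \in L_T^1$ and $x_i \in A$. Conjugation by $\theta_s$ (that is, the map $x \mapsto \theta_s^{-1} x \theta_s$) carries $e$ to the idempotent $\theta_{x_1}^{-1}\theta_{x_1}\cdots\theta_{x_n}^{-1}\theta_{x_n}$, which lies in $\O^\uparrow - \O$ by the explicit description of that set. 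Since conjugation by the (co)isometry $\theta_s$ is a $\D$-class-preserving bijection between the corner idempotents, the image is $\D$-related to $e$, giving an element of $\O^\uparrow - \O$ in $[e]$.

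For \emph{uniqueness}, suppose $p, q \in \O^\uparrow - \O$ with $p \mcD q$. I would use property (O4): elements of $\O^\uparrow - \O$ are uniquely determined by the subset of $\O$ lying below them, so it suffices to show that $p$ and $q$ lie above exactly the same idempotents of $\O$. Here is where I expect the main work to be. Because $p \mcD q$, there is $t \in H(T)$ with $t^*t = q$ and $tt^* = p$; I would analyze how such a partial bijection of $L_T$ interacts with the orthogonal family $\O$ and the upward closure $\O^\uparrow$, invoking (O1)--(O3) to control products and (O5) to pin down the at-most-one constraint in each $\D$-class of an element of $\O$. The goal is to derive that the set $\{e \in \O : e \le p\}$ equals $\{e \in \O : e \le q\}$, whence $p = q$ by (O4).

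The step I expect to be the genuine obstacle is the uniqueness argument, specifically ruling out the possibility that two genuinely different elements of $\O^\uparrow - \O$ (lying above different subsets of $\O$) could be $\D$-related. Property (O5) is clearly the intended tool, since it limits each element of $\O$ to sitting below at most one element of $\O^\uparrow - \O$ within a fixed $\D$-class, but converting this local statement into the global claim that $p$ and $q$ dominate identical subsets of $\O$ will require care. I would try to show that if $t^*t = q$, $tt^* = p$, then $t$ conjugates $\O \cap q^\downarrow$ bijectively onto $\O \cap p^\downarrow$ in a way compatible with (O5), forcing these sets to coincide. If that direct approach stalls, a fallback is to use the explicit word description of $\O^\uparrow - \O$ and the Markov structure of $L_T$ to compute the $\D$-class invariants of such words directly, showing two distinct reduced idempotent words of this form cannot be $\D$-equivalent.
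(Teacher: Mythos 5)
Your existence half is fine, and it matches what the paper leaves implicit: conjugating the canonical idempotent form $\theta_s\theta_{x_1}^{-1}\theta_{x_1}\cdots\theta_{x_n}^{-1}\theta_{x_n}\theta_s^{-1}$ by $\theta_s$ produces an element of $\O^\uparrow - \O$ in the same $\D$-class. The uniqueness route you commit to, however, has a genuine gap, exactly where you predicted trouble. Conjugation by $t$ (with $t^*t = q$, $tt^* = p$) is an order- and $\D$-class-preserving bijection from $q^{\downarrow}$ onto $p^{\downarrow}$, but it has no reason to carry $\O$ into $\O$, much less to fix it pointwise: $tet^*$ is merely \emph{some} idempotent $\D$-related to $e$, and distinct elements of $\O$ can be $\D$-related (in the full shift on two letters, $\theta_a\theta_a^{-1} \mathrel{\D} \theta_a^{-1}\theta_a = 1 \mathrel{\D} \theta_b^{-1}\theta_b \mathrel{\D} \theta_b\theta_b^{-1}$), so there is no well-defined "element of $\O$ corresponding to $e$ under conjugation." Moreover, (O4) requires the two sets $\{e \in \O : e \le p\}$ and $\{e \in \O : e \le q\}$ to be \emph{literally equal}, not merely in $\D$-preserving bijection, and (O5) cannot bridge that: it only constrains elements of $\O^\uparrow - \O$ lying in the $\D$-class of an element of $\O$, and the common class of $p$ and $q$ may contain no element of $\O$ at all. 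In the paper's own example, the class of $\theta_a^{-1}\theta_a\theta_b^{-1}\theta_b$ is such a class, since $\theta_a\theta_a^{-1}, \theta_b\theta_b^{-1}, \theta_c\theta_c^{-1}$ lie in the classes of $\theta_a^{-1}\theta_a, \theta_b^{-1}\theta_b, \theta_c^{-1}\theta_c$ respectively. So the implication "$e \in \O$ and $e \le q$ imply $e \le p$" is essentially as hard as the proposition itself, and your outline gives no way to obtain it.

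What actually closes the argument is your fallback, which is precisely the paper's proof, and it is far shorter than your framing suggests. Given $e = \theta_{x_1}^{-1}\theta_{x_1}\cdots\theta_{x_n}^{-1}\theta_{x_n}$ and $f = \theta_{y_1}^{-1}\theta_{y_1}\cdots\theta_{y_m}^{-1}\theta_{y_m}$ with $e \mathrel{\D} f$, choose $s$ with $s^*s = e$, $ss^* = f$, and write it in the canonical form $s = \theta_v\theta_{z_1}^{-1}\theta_{z_1}\cdots\theta_{z_k}^{-1}\theta_{z_k}\theta_w^{-1}$. Computing, $e = s^*s = \theta_w\theta_{z_1}^{-1}\theta_{z_1}\cdots\theta_{z_k}^{-1}\theta_{z_k}\theta_w^{-1}$ and $f = ss^* = \theta_v\theta_{z_1}^{-1}\theta_{z_1}\cdots\theta_{z_k}^{-1}\theta_{z_k}\theta_v^{-1}$. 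If $w$ were nonempty with first letter $w_1$, then $e \le \theta_w\theta_w^{-1} \le \theta_{w_1}\theta_{w_1}^{-1} \in \O$; but $e \in \O^\uparrow$ lies above some $\theta_a\theta_a^{-1}$, so orthogonality (O1) forces $a = w_1$, whence $\theta_{w_1}\theta_{w_1}^{-1} \le e \le \theta_{w_1}\theta_{w_1}^{-1}$ and $e \in \O$, a contradiction. Hence $w$ is empty, and likewise $v$, so $e = \theta_{z_1}^{-1}\theta_{z_1}\cdots\theta_{z_k}^{-1}\theta_{z_k} = f$. No appeal to (O4) or (O5) is needed; the word form does all the work, and the machinery you planned to assemble around conjugation is the part that cannot be made to function.
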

\begin{proof}
    What need to show that if
    \[
    e = \theta_{x_1}^{-1}\theta_{x_1}\cdots\theta_{x_n}^{-1}\theta_{x_n} \; \D \; \theta_{y_1}^{-1}\theta_{y_1}\cdots\theta_{y_m}^{-1}\theta_{y_m} = f
    \]
then $e = f$. Choose $s$ such that $s^*s = e$ and $ss^* = f$ and write
\[
    s = \theta_v\theta_{z_1}^{-1}\theta_{z_1}\cdots\theta_{z_k}^{-1}\theta_{z_k}\theta_w^{-1}.
\]
Then
    \[\theta_{x_1}^{-1}\theta_{x_1}\cdots\theta_{x_n}^{-1}\theta_{x_n} = \theta_w\theta_{z_1}^{-1}\theta_{z_1}\cdots\theta_{z_k}^{-1}\theta_{z_k}\theta_w^{-1},\]
    \[\theta_{y_1}^{-1}\theta_{y_1}\cdots\theta_{y_m}^{-1}\theta_{y_m} = \theta_v\theta_{z_1}^{-1}\theta_{z_1}\cdots\theta_{z_k}^{-1}\theta_{z_k}\theta_v^{-1}.\]
    So we see that $v = w = 1$. And thus
    \[\theta_{x_1}^{-1}\theta_{x_1}\cdots\theta_{x_n}^{-1}\theta_{x_n} = \theta_{z_1}^{-1}\theta_{z_1}\cdots\theta_{z_k}^{-1}\theta_{z_k} = \theta_{y_1}^{-1}\theta_{y_1}\cdots\theta_{y_m}^{-1}\theta_{y_m},\]
    as desired.
\end{proof}

We want to define a partial order on the $\D$-classes of the inverse hull of a Markov shift that will later be used to define a labelled graph. We need only look at the set of idempotents in $(\mcO^\uparrow - \mcO) \cup \{0\}$ to determine this relation as this set already contains a representative of each nonzero $\D$-class. The partial order we define will be induced by some of the relations in the natural partial order on $\mcO^\uparrow - \mcO$. The essential idea is to include those relations that appear between idempotents in certain diamonds, while excluding the others. We formalize this idea using the notion of a core set of idempotents.

\begin{defn}\label{def:core} Let $S$ be an inverse semigroup with $0$ that has finite intervals. For $e \in E(S)$ we define $\C_e$ to be the collection of subsets $C\subseteq e^\downarrow$ satisfying:
\begin{itemize}
    \item [(1)] $e \in C$.
    \item [(2)] If $f,g \in C$ with $fg \not = 0$, then $fg \in C$.
    \item [(3)] For each $h_1,h_2 \in C$ with $f \ll h_1$ and $g \ll h_2$, if $f$ and $g$ are incomparable and $fg \not = 0$, then $f,g \in C$.
    \item [(4)] For $e_1, e_2$ in $C$, if $e_1 < g< e_2$, then $g \in C$.
\end{itemize}
Then we call $C_e = \bigcap_{C \in \C_e}C$, the \textit{core} of $e$.
\end{defn}

One can quickly verify that $C_e \in \C_e$. We include here a useful fact for later that can be proved by a quick induction argument using property (3).

\begin{lem}\label{lem:core} Let $S$ be an inverse semigroup with $0$ having finite intervals. Suppose that $e,f,g$ are nonzero idempotents with $f \ll e$ and $g < e$ such that $f$ and $g$ are incomparable and $fg \neq 0$. Then $f,g \in C_e$.
\end{lem}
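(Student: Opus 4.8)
The plan is to exploit the finite-intervals hypothesis to produce a maximal chain from $g$ up to $e$ and then push idempotents into $C_e$ one level at a time using Definition \ref{def:core} (3), with $f$ serving as a fixed partner at every level. Concretely, since $0 \neq g < e$ and $S$ has finite intervals, I would first fix idempotents with
\[
0 \neq g = h_1 \ll h_2 \ll \cdots \ll h_n = e, \qquad n \geq 2.
\]

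The key preliminary observation is that $f$ pairs correctly with each $h_i$ for $1 \leq i \leq n-1$: namely $f$ and $h_i$ are incomparable and $f h_i \neq 0$. For the nonzero product, since $g = h_1 \leq h_i$ we have $h_i g = g$, so $f h_i \geq f h_i g = fg \neq 0$. For incomparability I would use that $f \ll e$: if $f \leq h_i$, then $f \leq h_i < e$ forces $f = h_i$ (nothing lies strictly between $f$ and $e$), whence $g \leq h_i = f$ contradicts the incomparability of $f$ and $g$; and if $h_i \leq f$, then $g \leq h_i \leq f$ yields the same contradiction. Hence $f$ and $h_i$ are incomparable for every $i \leq n-1$.

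With these facts in hand, I would prove by downward induction on $i = n, n-1, \dots, 1$ that $h_i \in C_e$, obtaining $f \in C_e$ along the way. Since $C_e \in \C_e$, the set $C_e$ satisfies all four closure conditions and $e \in C_e$ by Definition \ref{def:core} (1); this is the base case $h_n = e \in C_e$. For the inductive step, suppose $h_{i+1} \in C_e$ with $1 \leq i \leq n-1$. Then Definition \ref{def:core} (3), applied with ambient idempotents $e$ and $h_{i+1}$ (both in $C_e$) and with the incomparable pair $f \ll e$ and $h_i \ll h_{i+1}$ satisfying $f h_i \neq 0$, yields $f, h_i \in C_e$. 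Taking $i = 1$ gives $g = h_1 \in C_e$, and $f \in C_e$ as well, which is the desired conclusion.

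I expect the main obstacle to be purely bookkeeping: verifying that the single idempotent $f$ is a valid partner at every level of the chain, i.e.\ that $f$ is incomparable to each $h_i$ and that $f h_i \neq 0$. Both follow from $f \ll e$ together with $g \leq h_i$ and the commutativity of idempotents, but it is precisely here that the hypotheses $f \ll e$ (rather than merely $f < e$) and $fg \neq 0$ are essential; without $f \ll e$ one could have $f < h_i$ for some intermediate $h_i$, and the incomparability step would break down.
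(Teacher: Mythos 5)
Your proof is correct and is precisely the argument the paper has in mind: the paper only remarks that the lemma ``can be proved by a quick induction argument using property (3),'' and your chain $g = h_1 \ll \cdots \ll h_n = e$ with $f$ as the fixed incomparable partner at each level, pushed down by Definition \ref{def:core} (3), is exactly that induction, with the verification that $f$ and $h_i$ are incomparable and $fh_i \neq 0$ carried out carefully.
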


\begin{defn} For $a,b \in S /\D$, we write $a \fl' b$ if there exists $e \in E(S)$ and $f,g \in C_e$ where $[f] = a$ and $[g] = b$ and $f \leq g$. It is reflexive since if $a = [e]$, then $e \in C_e$ and hence $a \fl' a$. Let $\fl$ be the transitive closure of $\fl'$ together with the additional relations that $0 \fl a$ for every $a \in S/\D$. 
\end{defn}

We will prove that $\fl$ is antisymmetric when $S$ is the inverse hull of a Markov shift. First we show that this relation is preserved under Morita equivalence. Recall that if $S$ and $T$ are Morita equivalent then there is an equivalence functor $\F : C(S) \to C(T)$. Moreover, the map $\sigma([e]) = [\F(e)]$ is a well-defined bijection from the $\D$-classes of $S$ to the $\D$-classes of $T$ (see Lemma \ref{lem:equivalence}).

\begin{prop}
    Let $S$ and $T$ be inverse semigroups with $0$. If $S$ is Morita equivalent to $T$ via an equivalence functor $\F:C(S) \to C(T)$, then $a \fl_S b$ if and only if $\sigma(a) \fl_T \sigma(b)$ where $\sigma([e]) = [\F(e)]$.
\end{prop}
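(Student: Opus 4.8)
The plan is to reduce the statement to the generating relation $\fl'$ and the adjoined zero relations, and then to check that the bijection $\sigma$ matches these generators on the two sides. By definition $\fl_S$ is the transitive closure of $\fl'_S$ together with the relations $\{(0,a): a \in S/\D\}$, and likewise for $T$; and $\sigma$ is a bijection $S/\D \to T/\D$ by Lemma \ref{lem:equivalence}. Since a bijection carrying one relation onto another also carries the transitive closure of the first onto that of the second, it suffices to prove (i) that $\sigma$ sends the zero $\D$-class of $S$ to that of $T$, and (ii) that $a \fl'_S b$ holds if and only if $\sigma(a) \fl'_T \sigma(b)$.

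The crux is the following claim about cores: if $e \in E(S)$ and $f \in E(T)$ satisfy $\F(e) \D f$, then the isomorphism $\tau_{e,f} : eSe \to fTf$ supplied by Lemma \ref{lem:equivalence} satisfies $\tau_{e,f}(C_e) = C_f$. To prove this I would observe that $\tau_{e,f}$, being an isomorphism of inverse semigroups, restricts to a bijection $e^{\downarrow} \to f^{\downarrow}$ of idempotent downsets that preserves the order, products, and the zero element, and hence also preserves the relation $\ll$, which is defined purely from the order. Each of the four conditions in Definition \ref{def:core} defining the collection $\C_e$ is phrased entirely in terms of these preserved data and involves only idempotents below the ambient idempotent, so $\tau_{e,f}$ induces a bijection between $\C_e$ and $\C_f$. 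Because $\tau_{e,f}$ is a bijection it commutes with intersections, whence $\tau_{e,f}(C_e) = \tau_{e,f}\bigl(\bigcap_{C \in \C_e} C\bigr) = \bigcap_{C' \in \C_f} C' = C_f$.

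With this claim the equivalence of $\fl'$ follows directly from Lemma \ref{lem:equivalence}(2). For the forward direction, given $e \in E(S)$ and $f,g \in C_e$ with $[f]=a$, $[g]=b$, and $f \le g$, I would apply $\tau_{e,\F(e)}$: then $\tau_{e,\F(e)}(f),\tau_{e,\F(e)}(g) \in C_{\F(e)}$, the inequality is preserved, and Lemma \ref{lem:equivalence}(2) gives $[\tau_{e,\F(e)}(f)] = \sigma(a)$ and $[\tau_{e,\F(e)}(g)] = \sigma(b)$, so $\sigma(a) \fl'_T \sigma(b)$ is witnessed by $\F(e)$. The converse is symmetric: given a witness $f' \in E(T)$ with $p,q \in C_{f'}$, choose $e \in E(S)$ with $\F(e) \D f'$ (possible since $\sigma$ is onto), transport $p,q$ back through $\tau_{e,f'}^{-1}$ into $C_e$, and use the injectivity of $\sigma$ together with Lemma \ref{lem:equivalence}(2) to recover the labels $[\tau_{e,f'}^{-1}(p)] = a$ and $[\tau_{e,f'}^{-1}(q)] = b$. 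Finally, $\sigma$ sends the zero class to the zero class: since $0S0 = \{0\}$, the isomorphism forces $\F(0)T\F(0)$ to be a one-element semigroup, whence $\F(0) = 0$ and $\sigma([0]) = [0]$.

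The main obstacle is the core-preservation claim. One must verify carefully that each clause of Definition \ref{def:core} is genuinely invariant under an inverse-semigroup isomorphism and refers only to idempotents lying below the ambient idempotent $e$, so that the entire construction of $C_e$ is intrinsic to $eSe$ and is therefore transported faithfully by $\tau_{e,f}$. Once this is established, the passage through the transitive closure and the handling of the zero class are routine.
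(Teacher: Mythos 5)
Your proposal is correct and follows essentially the same route as the paper: reduce to the generating relation $\fl'$, transport witnesses $f,g \in C_e$ through the local isomorphism $\tau$ of Lemma \ref{lem:equivalence}, and use part (2) of that lemma to identify the resulting $\D$-classes. The only difference is one of detail: the paper simply asserts $\tau(f),\tau(g) \in C_{e'}$, whereas you justify this core-preservation claim (via the intrinsic, isomorphism-invariant nature of Definition \ref{def:core}) and also spell out the zero-class and transitive-closure bookkeeping that the paper leaves implicit.
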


\begin{proof}
    It suffices to show that $a \fl'_S b$ if and only if $\sigma(a) \fl'_T \sigma(b)$. Suppose $a,b \in S/\D$ with $a \fl_S'b$. Let $e,f,g \in E(S)$ where $f,g \in C_e, [f] = a$, and $[g] = b$ and $f \leq g$. Choose any $e' \in E(T)$ such that $\sigma([e]) = [e']$. Let $\tau : eSe \to e'Te'$ be the isomorphism defined in Lemma \ref{lem:equivalence}. Then $\tau(f), \tau(g) \in C_{e'}$ with $\tau(f) \leq \tau(g)$. By part (2) of the lemma we have 
    \[
        \sigma(a) = [ \F(f) ] = [ \tau(f) ] \fl'_{T} [ \tau(g) ] = [ \F(g) ] = \sigma(b).
    \]
    The converse is similar.
\end{proof}

\begin{lem}\label{lem:ineq} Let $S$ be the inverse hull of a Markov shift. For $a, b \in S / \D$, if $0 \neq a \fl b$ then there exists $f, g \in \mcO^\uparrow - \mcO$ with $[f] = a, [g] = b$ and $f \leq g$.
\end{lem}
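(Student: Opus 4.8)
The plan is to reduce the relation $\fl$ to single applications of $\fl'$ and then lift each such step to the canonical representatives. For a nonzero $\D$-class $c$, write $\widehat{c}$ for the unique representative of $c$ in $\O^\uparrow - \O$, which exists by the proposition preceding Definition \ref{def:core}. The only relations adjoined to $\fl'$ in forming $\fl$ have the form $0 \fl c$, and $\fl'$ never relates a nonzero class to $0$ (a witness $f \le g$ with $[g]=0$ forces $g = 0$, hence $f = 0$). Consequently any realization of $0 \neq a \fl b$ may be taken as a chain $a = c_0 \fl' c_1 \fl' \cdots \fl' c_n = b$ in which every $c_i$ is a nonzero $\D$-class; in particular $b \neq 0$. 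If a single step $c_{i-1} \fl' c_i$ can be shown to yield $\widehat{c_{i-1}} \le \widehat{c_i}$, then transitivity of the natural partial order gives $\widehat{a} = \widehat{c_0} \le \widehat{c_n} = \widehat{b}$, and $f = \widehat{a}$, $g = \widehat{b}$ are the desired representatives. Uniqueness of representatives in $\O^\uparrow - \O$ is precisely what lets the intermediate terms telescope.

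It therefore suffices to treat a single relation $a \fl' b$, witnessed by $e \in E(S)$ and $f, g \in C_e$ with $[f] = a$, $[g] = b$, and $f \le g$; the goal is $\widehat{[f]} \le \widehat{[g]}$. I would use the normal form for elements of $H(T)$: each nonzero idempotent $h$ is $\D$-related to $\widehat{[h]}\in\O^\uparrow-\O$, and the implementing element can be chosen to be a path, so that there is a word $w_h \in L_T$ with $\theta_{w_h}^{-1}\theta_{w_h} \ge \widehat{[h]}$ and $\theta_{w_h}\,\widehat{[h]}\,\theta_{w_h}^{-1} = h$. Conjugation by $\theta_{w_h}$ is then an order isomorphism of $(\theta_{w_h}^{-1}\theta_{w_h})^\downarrow$ onto $(\theta_{w_h}\theta_{w_h}^{-1})^\downarrow$ that preserves $\D$-classes. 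The strategy is to show that a \emph{single} word $w := w_f = w_g$ works for both $f$ and $g$. Granting this, conjugating the inequality $f \le g$ by $\theta_w^{-1}$ yields $\widehat{[f]} = \theta_w^{-1} f \theta_w \le \theta_w^{-1} g \theta_w = \widehat{[g]}$, as required.

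The crux, and the step I expect to be the main obstacle, is producing this common word $w$: equivalently, showing that two idempotents comparable \emph{inside a core} $C_e$ have the same maximal common prefix, so that the prefix-stripping conjugation is identical for both. This is exactly the behaviour the core is designed to enforce; an inequality of the form $\theta_w\,\widehat{[f]}\,\theta_w^{-1} \le \widehat{[g]}$ with $w$ nonempty does arise in the full natural order (and would violate the conclusion, since stripping different prefixes need not preserve comparability of follower sets), but it must be excluded from $\fl'$. I would establish the common-prefix property by induction on the generation of $C_e$ from $\{e\}$ under the closure operations of Definition \ref{def:core}: closure under nonzero products (2), adjoining diamond partners (3), and filling in intervals (4), together with Lemma \ref{lem:core}. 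The essential inputs are the orthogonality of $\O$ (O1), comparability of every idempotent with a member of $\O$ (O2), and the fact that elements of $\O^\uparrow - \O$ are determined by the members of $\O$ lying beneath them (O4); these force the idempotents produced by the diamond and interval closures to sit at a common level, so that a comparable pair in a core never mixes two different prefixes.

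Finally, the degenerate cases in which $\widehat{[f]}$ or $\widehat{[g]}$ is the representative of a $\D$-class that meets $\O$ are dispatched by the same conjugation argument, with the implementing path $\theta_w$ replaced by the appropriate generator; no new idea is needed beyond the common-prefix claim. Assembling the single-step result with the telescoping reduction of the first paragraph then produces $f = \widehat{a}$ and $g = \widehat{b}$ in $\O^\uparrow - \O$ with $[f] = a$, $[g] = b$, and $f \le g$, completing the proof.
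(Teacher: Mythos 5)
Your proposal is correct and shares the paper's skeleton: reduce $\fl$ to a chain of $\fl'$ steps, lift each single step to canonical representatives, and telescope using uniqueness of representatives in $\O^\uparrow - \O$. Where you differ is in how the single step is lifted. The paper argues top-down: for $e \in \O^\uparrow - \O$ it checks that the explicit set $(\O^\uparrow - \O) \cap e^\downarrow$ satisfies the four axioms of Definition \ref{def:core}, so minimality of the core (it is the intersection of all such sets) gives $C_e \subseteq \O^\uparrow - \O$ outright; replacing the witness $e$ of $a \fl' b$ by its canonical representative, the elements $f,g \in C_e$ then already \emph{are} the desired representatives, with nothing to strip. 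You instead keep the arbitrary witness $e \in E(S)$, propose to prove bottom-up (induction on the generation of $C_e$) that every element of $C_e$ carries the prefix of $e$, and then conjugate by $\theta_w^{-1}$. These two key facts are equivalent: specialized to $e \in \O^\uparrow - \O$, your common-prefix property is literally the statement $C_e \subseteq \O^\uparrow - \O$, and your prefix-stripping conjugation is exactly the transport argument that justifies the paper's otherwise unexplained phrase ``we may choose $e,f,g \in \O^\uparrow - \O$'' (cores are carried to cores by the $\D$-class-preserving order isomorphism $h \mapsto \theta_w^{-1} h \theta_w$, since Definition \ref{def:core} is phrased purely in terms of order and products). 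So your route buys explicitness where the paper is terse, but pays with normal-form bookkeeping and an induction that the paper's minimality trick avoids entirely: verifying four closure properties for one concrete candidate set is shorter than tracking prefixes through the diamond and interval closures. One small inaccuracy in your sketch: keeping products of core elements at a common level is the job of (O3) (closure of $(\O^\uparrow - \O) \cup \{0\}$ under multiplication), which is missing from your list of essential inputs.
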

\begin{proof}
For any $e \in \mcO^\uparrow - \mcO$, one can show that $(\mcO^\uparrow - \mcO )\cap e^\downarrow$ satisfies the four properties in Definition \ref{def:core}. Thus $C_e \subseteq \mcO^\uparrow - \mcO$. Suppose $0 \neq a \fl' b$. Since every $\D$-class of $S$ has a unique representative in $\mcO^\uparrow - \mcO$, we may choose $e,f,g \in \mcO^\uparrow - \mcO$ such that $f,g \in C_e$, with $[f] = a, [g] = b,$ and $f \leq g$. Now suppose $0 \neq a \fl b$. Then $a = a_0 \fl' a_1 \fl' a_2 \fl' \dots \fl' a_n = b$ for some $\D$-classes $a_0, a_1, \dots a_{n}$. For each $1 \leq k \leq n$ we may choose idempotents $f_k, g_k$ in $\mcO^\uparrow - \mcO$ such that $[f_k] = a_{k-1}, [g_k] = a_{k}$ and $f_k \leq g_k$. Since each nonzero $\D$-class has a unique representative in $\mcO^\uparrow - \mcO$, we conclude that $g_k = f_{k+1}$ for $1 \leq k \leq n-1$. Thus $[f_1] = a, [g_n] = b$ and $f_1 \leq g_n$. 

\end{proof}

\begin{prop} Let $S$ be the inverse hull of a Markov shift. Then $\fl$ is a partial order making $(S/\D, \fl)$ into a meet semilattice.
\end{prop}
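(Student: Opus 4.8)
The plan is to show that $\fl$ is antisymmetric (reflexivity and transitivity hold by construction, and the meet must be produced separately). Since $\fl$ is already the transitive closure of $\fl'$ with the extra relations $0 \fl a$, the main content is \emph{antisymmetry} together with the existence of meets.

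For antisymmetry, suppose $a \fl b$ and $b \fl a$ with both nonzero. By Lemma~\ref{lem:ineq}, passing to the canonical representatives in $\O^\uparrow - \O$, there exist idempotents $f_a, g_b, f_b, g_a \in \O^\uparrow - \O$ with $[f_a] = a$, $[g_b] = b$, $f_a \leq g_b$, and $[f_b] = b$, $[g_a] = a$, $f_b \leq g_a$. Because each nonzero $\D$-class contains a \emph{unique} representative in $\O^\uparrow - \O$ (the proposition preceding Definition~\ref{def:core}), we may identify $f_a$ with $g_a$ (both represent $a$) and $g_b$ with $f_b$ (both represent $b$). This collapses the two inequalities to $f_a \leq f_b$ and $f_b \leq f_a$ \emph{among representatives in} $\O^\uparrow - \O$, forcing $f_a = f_b$ and hence $a = b$. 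The key structural input is that the natural partial order restricted to $\O^\uparrow - \O$ is genuinely antisymmetric and that $\fl$ on nonzero classes is witnessed by a single honest inequality in $S$ rather than a chain that could wrap around—this is precisely the content Lemma~\ref{lem:ineq} provides by flattening the $\fl'$-chain using uniqueness of representatives. The relation $0 \fl a$ contributes nothing to antisymmetry since $a \fl 0$ would require, by Lemma~\ref{lem:ineq} applied with $b = 0$, a representative of $0$ above a nonzero one, which cannot produce $0 \fl' a$ in the nontrivial direction.

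For the meet-semilattice structure, I would argue that for nonzero $a, b$, the meet $a \wedge b$ is $[e_a e_b]$ where $e_a, e_b$ are the representatives of $a, b$ in $\O^\uparrow - \O$. By property (O3), $\O^\uparrow - \O$ (together with $0$) is closed under multiplication, so $e_a e_b \in (\O^\uparrow - \O) \cup \{0\}$ and thus represents a well-defined $\D$-class $c$. Using Lemma~\ref{lem:core} and Lemma~\ref{lem:ineq}, I would verify that $c \fl a$ and $c \fl b$: the inequality $e_a e_b \leq e_a$ should be realized inside some core $C_{e_a}$, invoking Lemma~\ref{lem:core} precisely when $e_a e_b$ arises as a product of incomparable elements immediately below members of the core, which is the situation the core axioms (2) and (3) are designed to capture. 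For the lower-bound universality, if $d \fl a$ and $d \fl b$ with $d$ nonzero, then by Lemma~\ref{lem:ineq} its representative $e_d$ satisfies $e_d \leq e_a$ and $e_d \leq e_b$, whence $e_d \leq e_a e_b$, giving $d \fl c$ again via Lemma~\ref{lem:ineq} read in reverse. When $e_a e_b = 0$ the meet is $0$, consistent with the adjoined relations.

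The main obstacle I anticipate is the forward direction of the meet computation: showing $c \fl a$ rather than merely $e_a e_b \leq e_a$ in the natural order. The relation $\fl$ is \emph{not} the full natural order—it deliberately retains only the inequalities witnessed inside cores—so I cannot simply read off $c \fl a$ from $e_a e_b \leq e_a$. I must confirm that the specific inequality $e_a e_b \leq e_a$ is one of the retained relations, i.e. that $e_a e_b$ lands in $C_{e_a}$ (or that a chain of core-witnessed steps connects them). This is where Lemma~\ref{lem:core} is indispensable, and I expect the careful case analysis to hinge on whether $e_a e_b$ equals $e_a$, equals $0$, or arises genuinely as a product of incomparable idempotents immediately below core elements, matching the three regimes the core definition anticipates.
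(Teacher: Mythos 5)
Your antisymmetry argument is correct and is essentially the paper's own: Lemma \ref{lem:ineq} plus uniqueness of representatives in $\O^\uparrow - \O$ collapses the two witnessing inequalities to $f_1 \leq g_1 = f_2 \leq g_2 = f_1$. The meet construction, however, has two genuine gaps. First, your formula --- $a \wedge b = [e_a e_b]$ whenever $e_a e_b \neq 0$, and $0$ otherwise --- is not what the paper establishes, and the obstacle you flag at the end is fatal to it as stated: since $\fl$ retains only core-witnessed inequalities, $e_a e_b \leq e_a$ in the natural order gives no $\fl$-relation by itself, and nothing in your sketch forces $e_a e_b$ into any core. The paper instead splits on whether there exists a \emph{nonzero class} $c$ with $c \fl a, b$; if not, the meet is declared to be $0$ (conceivably even when $e_a e_b \neq 0$), and if so, that hypothesis supplies exactly the missing mechanism: $c \fl a$ yields a chain $e_c = g_n \ll \cdots \ll g_1 \ll g_0 = e_a$ in $\O^\uparrow - \O$ with $[g_{k+1}] \fl' [g_k]$, and taking the largest $k$ with $e_a e_b \leq g_k$ makes $e_a e_b$ and $g_{k+1}$ incomparable with product bounded below by $e_c \neq 0$, so Lemma \ref{lem:core} places $e_a e_b$ in $C_{g_k}$ and gives $[e_a e_b] \fl' [g_k] \fl a$. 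Your anticipated case analysis (whether $e_a e_b$ equals $e_a$, equals $0$, or arises from incomparable idempotents) does not capture this; the relevant dichotomy is the existence of a nonzero $\fl$-lower bound, not the value of the product.

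Second, and unacknowledged: your greatest-lower-bound step invokes ``Lemma \ref{lem:ineq} read in reverse'' to pass from $e_d \leq e_a e_b$ to $d \fl [e_a e_b]$. The lemma has no such converse --- if every inequality of representatives in the natural order implied a $\fl$-relation, then $\fl$ would coincide with the natural order on $\O^\uparrow - \O$ and the entire core apparatus would be redundant. This direction is where the paper does its hardest work: setting $h_k = g_k e_b$ along the chain above, it runs a maximal-counterexample induction (choose the largest $k$ with $c \not\fl [h_k]$, show $h_k$ and $g_{k+1}$ are incomparable, apply Lemma \ref{lem:core} and closure of cores under nonzero products to get $h_{k+1} = h_k g_{k+1} \in C_{g_k}$, hence $c \fl [h_{k+1}] \fl' [h_k]$, a contradiction) to conclude $c \fl [h_0] = [e_a e_b]$. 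Without a replacement for this induction, the universality of your proposed meet is unproven.
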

\begin{proof}
We know that $\fl$ is reflexive and transitive. We first show that $\fl$ is antisymmetric. Suppose $a \fl b$ and $b \fl a$. If either $a$ or $b$ is $0$ then $a = b = 0$ since the only relation of the form $c \fl 0$ is when $c = 0$. We may then assume $a,b \neq 0$. By Lemma \ref{lem:ineq} there exists $f_1, f_2, g_1, g_2$ in $\mcO^\uparrow - \mcO$ such that $[f_1] = a, [g_1] = b, [f_2] = b, [g_2] = a$ where $f_1 \leq g_1$ and $f_2 \leq g_2$. Since $\D$-classes in $\mcO^\uparrow - \mcO$ have unique representatives, we have $f_1 \leq g_1 = f_2 \leq g_2 = f_1$. Thus $f_1 = f_2$ and $a = b$. 

Next we show that $(S/\D, \fl)$ has meets. Suppose that $a, b \in S / \D$. If there is no nonzero $\D$-class $c$ such that $c \fl a,b$, then $a \wedge b = 0$. Suppose there exists a nonzero $\D$-class $c$ such that $c \fl a,b$. Then there are idempotents $e_a, e_b,$ and $e_c$ in $\mcO^\uparrow - \mcO$ such that $e_c \leq e_a, e_b$. We wish to show that $a \wedge b = [e_a e_b]$. We first prove that $[e_a e_b] \fl a$. If $e_c = e_a e_b$ we are done, so suppose that $e_c < e_a e_b$. Find $g_k$ in $\mcO^\uparrow - \mcO$ such that $e_c = g_n \ll \cdots \ll g_1 \ll g_0 = e_a$ where $[g_{k+1}] \fl' [g_{k}]$ for $1 \leq k \leq n-1$. Choose the largest $k$ so that $e_a e_b \leq g_k$. Notice $k < n$ and that $e_a e_b$ and $g_{k+1}$ are incomparable. It follows from Lemma \ref{lem:core} that $e_a e_b \in C_{g_k}$. So 
\[
    [e_a e_b] \fl' [g_k] \fl [e_a] = a.
\]
A similar proof shows that  $[e_a e_b] \fl b$. We now show that $c \fl [e_a e_b]$. Let $h_k = g_k e_b$ for $0 \leq k \leq n$. Suppose for the sake of contradiction that $c \not\fl [h_k]$ for some $k$ and choose the largest such $k$. Note $k < n$. Notice that $h_k g_{k+1} = h_{k+1} \neq 0$. Also, since $c \fl [g_k]$, we must have $h_k < g_k$. Now $g_{k+1}$ and $h_k$ are incomparable. Indeed, $g_{k+1} \leq h_k$ implies that $g_{k+1} \leq h_k \leq g_k$. But then $h_k = g_k$ or $h_k = g_{k+1}$ as $g_{k+1} \ll g_k$. This is impossible since $c \fl [g_j]$ for each $j$. Alternatively, $h_k \leq g_{k+1}$ implies that $h_{k} = h_{k} g_{k+1} = h_{k+1}$ contradicting the maximality of $k$. Since $g_{k+1}$ and $h_k$ are incomparable it follows that $g_{k+1}, h_k \in C_{g_k}$. But then $h_{k+1} = h_k g_{k+1} \in C_{g_k}$. Thus $c \fl [h_{k+1}] \fl' [h_k]$, a contradiction. Thus $c \fl [h_0] = [e_a e_b]$.

Thus we have shown that $(S/\D,\fl)$ is a meet semilattice where $a \wedge b = 0$ if there is no nonzero $c$ such that $c \fl a,b$ and otherwise $a \wedge b = [e_a e_b]$ where $e_a, e_b$ are $\D$-class representatives of $a$ and $b$ respectively chosen from $\mcO^\uparrow - \mcO$.
\end{proof}

\section{Morita Equivalence of Markov Shifts}

By the last two propositions, we know that if $S$ is the inverse hull of a Markov shift and we choose the set of $\D$-class representatives of $S$ to be $\{e_a : a \in S/\D\} = (\mcO^\uparrow - \mcO) \cup \{0\}$, then the induced partial order $\fl$ satisfies the conditions given in section 2. Thus we have an inverse semigroup $S^{\fl}$ that is Morita equivalent to $S$. We will now show that $S^{\fl}$ is Morita equivalent to a labelled graph inverse semigroup. 

\begin{prop}\label{prop:flmarkov} Let $S$ be the inverse hull of a Markov shift and
\[\{e_a : a \in S/\D \} =  (\mcO^\uparrow - \mcO) \cup \{0\}\] 
be a set of idempotent $\D$-class representatives for $S$. We have the following
\begin{enumerate}
\item if $a,b \in S / \D$ with $a \wedge b \neq 0$, then $e_a e_b = e_{a \wedge b}$, and
\item if $a,b,c \in S / \D$ with $a,b \fl c$, then $e_a e_b = e_{a \wedge b}$.
\end{enumerate}
\end{prop}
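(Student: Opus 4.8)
The plan is to reduce both statements to the meet-semilattice structure established in the previous proposition, together with two structural facts about the inverse hull $S$: that $(\O^\uparrow - \O) \cup \{0\}$ is closed under multiplication (property (O3)), and that each nonzero $\D$-class has a unique representative in $\O^\uparrow - \O$.

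For (1), since $e_a, e_b \in (\O^\uparrow - \O) \cup \{0\}$, property (O3) gives $e_a e_b \in (\O^\uparrow - \O) \cup \{0\}$. The previous proposition shows that when $a \wedge b \neq 0$ we have $a \wedge b = [e_a e_b]$; in particular $e_a e_b \neq 0$, so $e_a e_b \in \O^\uparrow - \O$. As $e_a e_b$ and $e_{a \wedge b}$ both lie in $\O^\uparrow - \O$ and represent the same $\D$-class $a \wedge b$, uniqueness of representatives forces $e_a e_b = e_{a \wedge b}$.

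For (2), I would first dispose of the degenerate cases: the only relation of the form $x \fl 0$ is $0 \fl 0$, so if $c = 0$ then $a = b = 0$, and if either $a$ or $b$ is $0$ then both $e_a e_b$ and $e_{a \wedge b}$ vanish. So assume $a, b, c \neq 0$; by Lemma \ref{lem:ineq} we then have $e_a, e_b \leq e_c$ in $\O^\uparrow - \O$. If $e_a e_b = 0$, then $a \wedge b = 0$ (otherwise (1) would give $e_a e_b = e_{a \wedge b} \neq 0$), so both sides vanish. The remaining case is $e_a e_b \neq 0$, and here it suffices to prove $[e_a e_b] \fl a$ and $[e_a e_b] \fl b$: then $[e_a e_b]$ is a nonzero lower bound of $a$ and $b$, so $[e_a e_b] \fl a \wedge b$ forces $a \wedge b \neq 0$, and (1) applies.

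Proving $[e_a e_b] \fl a$ and $[e_a e_b] \fl b$ is the heart of the matter and the step I expect to be the main obstacle. To obtain $[e_a e_b] \fl b$, I would, exactly as in the proof of the previous proposition, unwind $a \fl c$ into a chain $e_a = p_0 \ll p_1 \ll \cdots \ll p_n = e_c$ in $\O^\uparrow - \O$ with $[p_i] \fl' [p_{i+1}]$ for each $i$ (refining to covering steps inside cores using finite intervals and the order-convexity of cores, Definition \ref{def:core}(4)). Multiplying through by $e_b$ and writing $h_i = p_i e_b$ yields $e_a e_b = h_0 \leq h_1 \leq \cdots \leq h_n = e_b$, where the last equality uses $e_b \leq e_c$ and every $h_i$ is nonzero because $h_0 = e_a e_b \neq 0$. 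I would then verify $[h_i] \fl [h_{i+1}]$ at each step (trivial when $h_i = h_{i+1}$): writing $h_i = p_i h_{i+1}$, the case $p_i \leq h_{i+1}$ gives $h_i = p_i$ and, since $p_i \ll p_{i+1}$, forces $h_{i+1} = p_{i+1}$, reducing to the given relation $[p_i] \fl' [p_{i+1}]$; the case that $p_i$ and $h_{i+1}$ are incomparable is precisely the hypothesis of Lemma \ref{lem:core} (with $p_i \ll p_{i+1}$, $h_{i+1} < p_{i+1}$, and $p_i h_{i+1} = h_i \neq 0$), placing $p_i, h_{i+1}$, and hence $h_i = p_i h_{i+1}$, in the core $C_{p_{i+1}}$ with $h_i \leq h_{i+1}$, so that $[h_i] \fl' [h_{i+1}]$. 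Transitivity of $\fl$ then gives $[e_a e_b] = [h_0] \fl [h_n] = b$. The symmetric argument, unwinding $b \fl c$ and multiplying by $e_a$ (idempotents commute, so $e_b e_a = e_a e_b$), yields $[e_a e_b] \fl a$, which completes the proof.
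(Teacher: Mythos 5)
Your proof of part (1) is the paper's argument made slightly more explicit, so there is nothing to flag there. For part (2) you take a genuinely different route. The paper refines both $a \fl c$ and $b \fl c$ into covering chains $e_a = g_n \ll \cdots \ll g_0 = e_c$ and $e_b = f_m \ll \cdots \ll f_0 = e_c$, assumes without loss of generality that $g_i$ and $f_j$ are incomparable for all $i,j > 0$ (replacing $c$ by a suitable $c' = [f_i g_j] \fl c$ if not), and then iterates Lemma \ref{lem:core} across the two chains simultaneously to place $e_a$ and $e_b$ themselves in $C_{e_c}$; closure of the core under nonzero products then gives $0 \neq [e_a e_b] \fl' a, b$ in one shot. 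You instead keep only the chain for $a$, multiply it termwise by $e_b$, and certify the relation $\fl$ one covering step at a time, invoking Lemma \ref{lem:core} only locally in the incomparable case. Your device is precisely the one the paper uses in the \emph{preceding} proposition (the $h_k = g_k e_b$ argument showing $c \fl [e_a e_b]$ in the meet-semilattice proof), redeployed here in direct rather than contradiction form. The trade-off: the paper's proof is shorter but hinges on the parenthetical ``without loss of generality'' replacement of $c$, which it does not justify and which is the least rigorous point of its argument; your stepwise version is longer but needs no such reduction, and your case analysis (the trivial step, the case $p_i \leq h_{i+1}$ forcing $h_{i+1} = p_{i+1}$, and the incomparable case feeding Lemma \ref{lem:core}) is exhaustive, since $h_{i+1} \leq p_i$ collapses to the trivial case. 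Both proofs share the same two external inputs: refinability of $\fl$ into covering chains with $\fl'$-links (finite intervals plus core convexity), and the identification $a \wedge b = [e_a e_b]$ from the meet-semilattice proposition, which underwrites both your appeal to (1) at the end and the paper's final equality.
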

\begin{proof}
Let $a,b \in S / \D$ with $a \wedge b \neq 0$. Then $a \wedge b = [e_a e_b]$. Therefore we have that $e_{a \wedge b} = e_{[e_a e_b]} = e_a e_b$.

Next suppose $a,b,c \in S / \D$ with $a,b \fl c$. If $e_a e_b = 0$ then $a \wedge b = 0$ and we are done. Otherwise we can find $g_i$ such that $e_a = g_n \ll \cdots \ll g_1 \ll g_0 = e_c$ and $[g_{i+1}] \fl' [g_{i}]$ for $0 \leq i \leq n-1$. Also there exists $f_j$ such that $e_b = f_m \ll \cdots \ll f_1 \ll f_0 = e_c$ and $[f_{j+1}] \fl' [f_{j}]$ for $0 \leq i \leq m-1$. Without loss of generality, we can assume that $f_j$ and $g_i$ are incomparable for $i,j >0$ (we could replace $c$ with $c' = [f_i g_j] \fl c$ where $i$ and $j$ are appropriately chosen if necessary). Notice then that for all $i,j$ nonzero we have $g_i$ and $f_j$ incomparable with nonzero product. Using Lemma \ref{lem:core} we have that $e_a, e_b \in C_{e_c}$. Thus $e_a e_b \in C_{e_c}$ and $0 \neq [e_a e_b] \fl a, b$. Thus
\[
    e_{a \wedge b} = e_{[e_a e_b]} = e_a e_b.
\]
\end{proof}

\begin{prop}\label{prop:CDineq} Let $S$ be the inverse hull of a Markov shift and let $S^{\fl}$ be as defined above. Then 
$\C = \{ [ a, e_a, a] : e_a \in \mcO^\uparrow - \mcO\} \cup \{0\}$ is a coherent set for $S^{\fl}$. Moreover, $(\C, \leq)$ is order isomorphic to $(S/\D, \fl)$.
\end{prop}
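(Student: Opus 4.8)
The plan is to read everything off the explicit descriptions of $S^\fl = IM(S,I,p)$ in Propositions \ref{prop:flD} and \ref{prop:flineq}, combined with Lemma \ref{lem:ineq}, Proposition \ref{prop:flmarkov}, and the fact (established inside the proof of Lemma \ref{lem:ineq}) that $C_e \subseteq \O^\uparrow - \O$ whenever $e \in \O^\uparrow - \O$. First I would check that $\C$ is a complete set of idempotent representatives. By Proposition \ref{prop:flD}, $[a,e_a,a] \D [b,e_b,b]$ iff $e_a \D e_b$, which for $e_a,e_b \in \O^\uparrow - \O$ happens iff $a=b$; so the listed elements lie in distinct $\D$-classes. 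Conversely any nonzero idempotent of $S^\fl$ has the form $[w,f,w]$ with $0 \neq f \le e_w$, and since $f \D e_{[f]}$ with $e_{[f]} \in \O^\uparrow - \O$, Proposition \ref{prop:flD} gives $[w,f,w] \D [[f],e_{[f]},[f]] \in \C$.

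A tool I would set up at the outset is the \emph{interval correspondence}: for fixed $v$ the map $f \mapsto [v,f,v]$ is an order isomorphism from $\{f \in E(S) : f \le e_v\}$ onto $\{F \in E(S^\fl) : F \le [v,e_v,v]\}$. This follows from Proposition \ref{prop:flineq} (for $f,g \le e_v$ one has $[v,f,v] \le [v,g,v]$ iff $f \le g$) once one notes that any idempotent below $[v,e_v,v]$ may be rewritten with outer coordinates equal to $v$ via the relation $\gamma$. In particular $F \ll [v,e_v,v]$ corresponds exactly to some $f \ll e_v$ in $S$. With this in hand the order isomorphism and conditions (1), (3) are short. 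For the ``moreover'', Proposition \ref{prop:flineq} gives $[a,e_a,a] \le [b,e_b,b]$ iff $e_a \le e_b$ and $a \wedge b \neq 0$; since $a \wedge b \neq 0$ forces $a \wedge b = [e_ae_b] = a$ (as $e_a \le e_b$), this is equivalent to $a \fl b$, while conversely $a \fl b$ yields $e_a \le e_b$ by Lemma \ref{lem:ineq} and $a \wedge b = a \neq 0$. Thus $a \mapsto [a,e_a,a]$ is the desired order isomorphism $(S/\D,\fl) \to (\C,\le)$. For (1), the $IM$-product is $[a,e_a,a][b,e_b,b] = [a, e_a e_{a\wedge b} e_b, b]$, which is $0$ when $a \wedge b = 0$ and otherwise equals $[a, e_{a\wedge b}, b] = [a\wedge b, e_{a\wedge b}, a\wedge b] \in \C$, using $e_{a\wedge b} \le e_a,e_b$ (Lemma \ref{lem:ineq}) and $\gamma$.

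For condition (3) the interval correspondence converts incomparable $F,G \ll [v,e_v,v]$ into incomparable $f,g \ll e_v$ in $S$; if $fg = 0$ then $FG = 0 \in \C$, so assume $fg \neq 0$. Then Lemma \ref{lem:core} gives $f,g \in C_{e_v}$, hence $fg \in C_{e_v} \subseteq \O^\uparrow - \O$ by Definition \ref{def:core}(2), so $fg = e_{[fg]}$ and $[fg] \fl v$. Since $FG = [v,fg,v]$ and $v \wedge [fg] = [fg] \neq 0$, we get $FG = [[fg],e_{[fg]},[fg]] \in \C$.

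The hard part is condition (2). Given $0 \neq [a,e_a,a] \le F \le [b,e_b,b]$, the interval correspondence lets me write $F = [b,e,b]$ with $e_a \le e \le e_b$ in $S$. Here $\O^\uparrow - \O$ is closed under betweenness: $e \ge e_a$ forces $e \in \O^\uparrow$, and if $e \in \O$ then picking $o \in \O$ with $o \le e_a$ gives $o \le e$ with $o,e \in \O$, so $o = e$ by (O1), forcing $e_a = e \in \O$, contradicting $e_a \in \O^\uparrow - \O$; hence $e \in \O^\uparrow - \O$ and $e = e_{[e]}$. It then remains to show $b \wedge [e] \neq 0$, for then $F = [[e],e,[e]] \in \C$. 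The order isomorphism already gives $a \fl b$, so it suffices to prove $a \fl [e]$, since then $a \fl b \wedge [e]$ with $a \neq 0$ forces $b \wedge [e] \neq 0$. This is a convexity statement for $\fl$: although $\fl$ is strictly coarser than the natural order on $\O^\uparrow - \O$, I expect a chain $e_a = f_1 \le \cdots \le f_{n+1} = e_b$ realizing $a \fl b$ can be \emph{projected} by multiplying through by $e$, the projected chain $f_1 e = e_a, \, f_2 e, \dots, f_{n+1} e = e$ again consisting of genuine $\fl$-steps by applying Lemma \ref{lem:core} to the diamonds produced at each stage, exactly as in the meet-semilattice argument preceding Proposition \ref{prop:flmarkov}. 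Checking that each projected step stays inside $\O^\uparrow - \O$ and that the incomparability hypotheses of Lemma \ref{lem:core} hold throughout is the main technical obstacle, and is where the real content of the proposition lies.
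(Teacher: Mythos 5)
Your proposal is correct and is essentially the paper's own argument: the completeness check, the order isomorphism $a \mapsto [a,e_a,a]$, and conditions (1) and (3) proceed exactly as in the paper (your interval correspondence merely systematizes the $\gamma$-rewriting that the paper performs ad hoc, e.g.\ replacing $c$ by $b \wedge c$), and condition (2) is reduced to the same bridge between the natural order and $\fl$. The step you flag as the main obstacle is precisely the one the paper also leaves unexpanded (``by an argument similar to the proof of Proposition \ref{prop:flmarkov}(2)'', where the paper establishes $[e] \fl b$ rather than your $a \fl [e]$), and your sketch does go through: by (O3) each projected idempotent $g_k e$ stays in $(\O^\uparrow - \O)\cup\{0\}$ and is nonzero because it dominates $e_a$, and the largest-bad-index contradiction from the meet-semilattice proposition, using Lemma \ref{lem:core} and Definition \ref{def:core}(2), applies verbatim with $e$ in place of $e_b$.
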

\begin{proof}
We first show that the map $\sigma: S/\D \to \C$ where $\sigma(a) = [a, e_a, a]$ is an order isomorphism. Since $[a,e_a,a] = [b, e_b, b]$ if and only if $e_a = e_b$ it follows that $\sigma$ is a bijection. Next, suppose that $a \fl b$ with $a \neq 0$. By Lemma \ref{lem:ineq}, we have $e_a \leq e_b$. Since $e_a \leq e_b$ and $a \wedge b = a \neq 0$, it follows by Proposition \ref{prop:flineq} that $[a, e_a, a] \leq [b, e_b, b]$. Conversely, suppose that $0 \neq [a, e_a, a] \leq [b, e_b, b]$. Then $e_a \leq e_b$ and $a \wedge b \neq 0$. By Proposition \ref{prop:flmarkov} (1),  $e_{a \wedge b} = e_a e_b = e_a$. Thus $a \fl b$.

Next we prove that $\C$ satisfies the three properties given in Definition \ref{def:coherent}. Let $[a, e_a, a], [b, e_b, b]$. Since $0 \in \C$, we may assume that
\[ [a,e_a,a][b,e_b,b] = [a, e_a e_{a \wedge b} e_b, b] \neq 0.
\]
Thus $a \wedge b \neq 0$ and
\[
[a, e_a e_{a \wedge b} e_b, b] = [a, e_{a \wedge b}, b] = [a \wedge b, e_{a \wedge b}, a \wedge b]
\]
where the last equality follows from Proposition \ref{prop:flineq}. Thus $\C$ is closed under multiplication.

Next suppose that $0 \neq [a, e_a, a] \leq f \leq [b, e_b, b]$, for some idempotent $f \in S^{\fl}$. Then $f = [c, e, c]$ for some idempotent $e \leq e_c$. Moreover, $0 \neq e_a \leq e \leq e_b$. It follows that $e \in \mcO^\uparrow - \mcO$. Write $e = e_d$. We have $0 \neq e_a \leq e_d \leq e_b$ with $a \wedge b \neq 0$. Then $e_{a \wedge b} = e_a e_b = e_a$, so $a \fl b$. It follows by an argument similar to the proof of Proposition \ref{prop:flmarkov} (2), that $d \fl b$. Moreover, we have that $[c,e,c] = [b \wedge c, e, b \wedge c]$ since $b \wedge c \neq 0$. Also $a \wedge b \wedge c = a \wedge c \neq 0$. So, replacing $c$ with $b \wedge c$ if necessary, we may assume that $c \fl b$.

Since $c,d \fl b$, we have $e_{c \wedge d} = e_c e_d = e_d$. So $d \fl c$. It follows that $f = [c,e_d,c] = [d,e_d,d]$. Thus $f \in \C$.

Finally, suppose that $[a,f,a], [b,g,b] \ll [c, e_c, c]$ with $[a,f,a]$ and $[b,g,b]$ incomparable. We show that 
\[[a, f e_{a \wedge b} g, b] \in \C.
\]
If $[a, f e_{a \wedge b} g, b] = 0$ then we are done, so we may assume that $fg \neq 0$ and $a \wedge b \neq 0$. Then $[a,f,a] \neq 0$ and hence $a \wedge c \neq 0$. We have $[a, f e_{a \wedge b} g, b] = [a, fg, b]$. By replacing $a$ with $a \wedge c$ if necessary, we may assume that $a \fl c$. Similarly, we may assume that $b \fl c$.

We have $fg \neq 0$ with $f, g \ll e_c$ and $a \wedge b \neq 0$. If $f = g$ then $[a,f,a] = [b,g,b]$, which contradicts the assumption that the two elements are incomparable. Thus $f,g$ are incomparable (since each is immediately below $e_c$). So $fg, e_a, e_b \in C_{e_c} \subseteq \mcO^\uparrow - \mcO$. So $[fg] \fl a \wedge b$ and we have
\[
    [a,f,a][b,f,b] = [a, f e_{a \wedge b} g, b] = [a, fg, b] = [[fg], fg, [fg]] \in \C.
\]
\end{proof}

At this point, by Theorem \ref{thm:labelledgraph}, we have shown that the inverse hull $S$ of a Markov shift is Morita equivalent to a labelled graph inverse semigroup. Moreover, as we will soon see, one can construct the labelled graph associated with $S$ using the following ``combinatorial'' data inside $S$: the set of cores $C_e$ over all $e \in \mcO^\uparrow - \mcO$ together with the idempotents immediately below each core. First we extend the coherent set $\C$ to a larger set $CD(S)$ of idempotents in $S^{\fl}$ that we view as the combinatorial data associated with $S$. 

\begin{defn} Let $S$ be the inverse hull of a Markov shift and let $S^{\fl}$ be defined as above. Define 
\[ 
\C^{\ll} = \{ [b, f, b] : 0 \neq f \ll e_b, \text{and either $f \neq e_{[f]}$ or $[f] \not\fl b$} \}.
\]
We refer to the set $CD(S) = \C \cup \C^{\ll}$ as the \emph{combinatorial data} of $S$.
\end{defn}

The sets $\C$ and $\C^{\ll}$ are disjoint. To see this, suppose $0 \neq [a, e_a, a] = [b, f, b]$ where $f \leq e_b$. Then $a \wedge b \neq 0$ and $f = e_a$. So $f = e_{[f]}$. Moreover, since $[f] \wedge b \neq 0$ we have $e_{[f] \wedge b} = e_{[f]} e_b = e_{[f]}$. Thus $[f] \fl b$. It follows that $[b,f,b] = [[f], e_{[f]}, [f]] \in \C$. Recall that a nonzero idempotent of an inverse semigroup is said to be \emph{primitive} if it is minimal in the set of nonzero idempotents. 

\begin{prop}\label{propCD} The set $\text{CD}(S)$ is a subsemigroup of $S^{\fl}$. Moreover, $\C^{\ll}$ is the set of primitive idempotents of $\text{CD}(S)$.
\end{prop}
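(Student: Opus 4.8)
The plan is to work entirely inside $S^{\fl}$, where every element of $CD(S)$ is an idempotent $[a,f,a]$ with $f\leq e_a$, and $f$ is either the representative $e_a$ (when $[a,f,a]\in\C$) or immediately below it (when $[a,f,a]\in\C^{\ll}$). The computational backbone is the product formula for such idempotents: a direct computation with the multiplication of $IM(S,I,p)$, using Proposition \ref{prop:flmarkov} and the description of the $\gamma$-relation, gives
\[
[a,f,a][b,g,b] = [a\wedge b,\, fg,\, a\wedge b],
\]
which is $0$ exactly when $a\wedge b = 0$ or $fg=0$. Since $0\in\C\subseteq CD(S)$, only the case $a\wedge b\neq 0$, $fg\neq 0$ needs attention, and there $h:=fg\leq e_{a\wedge b}=e_a e_b$.

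To prove closure I would show $[c,h,c]\in CD(S)$ for $c=a\wedge b$, and it suffices to check that $h=e_c$, or $h\ll e_c$, or $h\in\O^\uparrow-\O$; in the last case $h=e_{[h]}$ and $[c,h,c]=[[h],e_{[h]},[h]]\in\C$. I would run a case analysis on how $f,g$ sit relative to $e_c$. When the two factors lie immediately below a common representative (for instance $a=b$) and are incomparable, Definition \ref{def:coherent}(3) (available since $\C$ is coherent by Proposition \ref{prop:CDineq}) places the product in $\C$ directly; when $f$ and $e_c$ (or $g$ and $e_c$) are comparable the conclusion is immediate. The decisive configuration is when, say, $f$ and $e_c$ are incomparable: then $f\ll e_a$, $e_c<e_a$, and $fe_c\neq 0$, so Lemma \ref{lem:core} forces $f,e_c\in C_{e_a}$, and since $C_{e_a}\subseteq\O^\uparrow-\O$ (as in the proof of Lemma \ref{lem:ineq}) together with the closure of $(\O^\uparrow-\O)\cup\{0\}$ under products (O3), the relevant restricted idempotents, and hence $h$, fall back into $\O^\uparrow-\O$. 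This is the main obstacle: organizing the cases so that no product lands strictly below the ``coatom level'' while escaping $\O^\uparrow-\O$.

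For the second assertion I would first show every $[b,f,b]\in\C^{\ll}$ is primitive in $CD(S)$. By Proposition \ref{prop:flineq} and the $\gamma$-relation any idempotent below $[b,f,b]$ may be written $[b,h,b]$ with $h\leq f$, and if it is strictly below then $h<f$, so $h$ is no longer immediately below $e_b$. If such a $[b,h,b]$ lies in $\C$, then $[b,h,b]\leq[b,f,b]\leq[b,e_b,b]$ with $[b,h,b],[b,e_b,b]\in\C$ nonzero, and the convexity property Definition \ref{def:coherent}(2) forces $[b,f,b]\in\C$, contradicting $[b,f,b]\in\C^{\ll}$. If instead $[b,h,b]\in\C^{\ll}$, comparing its representative with $f$ and invoking Lemma \ref{lem:core} and (O3) again forces $h\in\O^\uparrow-\O$, hence $[b,h,b]\in\C$, reducing to the previous contradiction. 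Thus nothing in $CD(S)$ lies strictly between $0$ and $[b,f,b]$.

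Finally I would show that no nonzero element of $\C$ is primitive, which, since the nonzero idempotents of $CD(S)$ are exactly $(\C\setminus\{0\})\sqcup\C^{\ll}$, identifies the primitives as precisely $\C^{\ll}$. Given $[a,e_a,a]\in\C$ with $a\neq 0$, membership $e_a\in\O^\uparrow-\O$ yields a nonzero $p\in\O$ with $p<e_a$, so $e_a$ is not primitive in $S$; by the finite intervals hypothesis there is $f$ with $0\neq f\ll e_a$, whence $[a,f,a]\in CD(S)$ and $[a,f,a]<[a,e_a,a]$. I expect the subsemigroup closure to be the hardest part, precisely because of the delicate cross-representative products that are controlled only by Lemma \ref{lem:core} and (O3).
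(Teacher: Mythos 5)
Your computational backbone is sound: the product formula $[a,f,a][b,g,b]=[a\wedge b,\,fg,\,a\wedge b]$ is correct, and the toolkit you reach for (Lemma \ref{lem:core}, coherence of $\C$ from Proposition \ref{prop:CDineq}, property (O3)) is the right one. But your load-bearing reduction for closure contains a genuine gap: it is \emph{not} sufficient that $h=fg$ lies in $\O^\uparrow-\O$. The identification $[c,h,c]=[[h],e_{[h]},[h]]$ requires $c\wedge[h]\neq 0$ in the semilattice $(S/\D,\fl)$, equivalently $[h]\fl c$, and $\fl$ is strictly weaker than the natural partial order on representatives: $h=e_{[h]}\leq e_c$ does not imply $[h]\fl c$. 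Indeed, the set $\C^{\ll}$ is defined precisely to capture the elements $[b,f,b]$ with $f=e_{[f]}\ll e_b$ but $[f]\not\fl b$, so conflating the two orders erases the distinction the whole construction is built on. If $h=e_{[h]}\leq e_c$ with $[h]\not\fl c$ and $h$ is not immediately below any relevant representative, then $[c,h,c]$ lies in neither $\C$ nor $\C^{\ll}$, and your closure argument collapses. The gap is patchable --- in every case where you obtain $h\in\O^\uparrow-\O$ via Lemma \ref{lem:core}, the same core membership gives $h,e_c\in C_{e_a}$ with $h\leq e_c$, hence $[h]\fl' c$ --- but this extra piece of information must be tracked explicitly, and your proposal never does so.

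Two further points. First, the cases you dismiss as ``immediate'' or leave as ``invoking Lemma \ref{lem:core} and (O3) again'' are where the real work sits: for instance, when $e_c\leq f$ the correct conclusion is not a formula for the product but a contradiction with $[a,f,a]\in\C^{\ll}$ via Definition \ref{def:coherent}(2); and in your primitivity step, an element of $\C^{\ll}$ below $[b,f,b]$ has the form $[b',h,b']$ with $h\ll e_{b'}$ for some \emph{other} vertex $b'$ satisfying $b\wedge b'\neq 0$, so the comparison that closes the argument is between $f$ and $e_be_{b'}$ (leading, via Lemma \ref{lem:core}, to $f\in C_{e_b}$, hence $f=e_{[f]}$ and $[f]\fl b$, contradicting $[b,f,b]\in\C^{\ll}$), not ``its representative with $f$'' as you suggest. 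Second, this is where your route diverges from the paper's, to your disadvantage: the paper does not merely show products land in $CD(S)$, it computes them exactly --- a nonzero product of an element of $\C^{\ll}$ with anything in $CD(S)$ equals that element of $\C^{\ll}$, and distinct elements of $\C^{\ll}$ multiply to zero --- and then both closure and primitivity of $\C^{\ll}$ fall out of the same multiplication rules at once. Your weaker closure statement is what forces the separate, and in your sketch the most underdeveloped, primitivity analysis. One genuinely useful detail you supply that the paper leaves implicit is the final step: nonzero elements of $\C$ are not primitive, because $e_a\in\O^\uparrow-\O$ lies strictly above some $p\in\O$, and finite intervals then produce $0\neq f\ll e_a$ with $[a,f,a]\in CD(S)$ strictly below $[a,e_a,a]$; that part of your argument is correct.
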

\begin{proof} We know $\C$ is closed under multiplication by Proposition \ref{prop:CDineq}. Let $[a,f,a] \in \C^{\ll}$ and $[b, e_b, b] \in \C$. We will show that $[a,f,a] [b, e_b, b] \neq 0$ if and only if $a \fl b$.
First suppose that $[a,f,a] [b, e_b, b] \neq 0$. Then $e_b f \neq 0$ and $a \wedge b \neq 0$. Thus $e_{a \wedge b} = e_a e_b$ and $[b, e_b, b] [a,f,a] = [a \wedge b, e_b f, a \wedge b]$. There are two cases to consider. First suppose that $f \neq e_{[f]}$. In that case $e_b f \in \mcO$ and, since products of distinct elements of $\mcO$ are zero, we must have $f \leq e_b$. Hence $f \leq e_a e_b$. But $f \ll e_a$, so either $f = e_a e_b$ or $e_a = e_a e_b$. The first case is impossible since $f \in \mcO$. In the second case, $e_a \leq e_b$. But $a \wedge b \neq 0$ then implies that $a \fl b$.

Next, suppose $f = e_{[f]}$ and $[f] \not\fl a$. We claim that $f \leq e_a e_b$. If $f = e_{[f]}$ and $e_a e_b$ are incomparable, then since $e_{[f]}e_a e_b = f e_b \neq 0$ it follows that $e_{[f]}, e_a e_b \in C_{e_a}$. Thus $[f] \fl a$, a contradiction. But if $e_a e_b \leq e_{[f]}$, then
\[
    [a \wedge b, e_a e_b, a \wedge b] \leq  [a, f, a] \leq [a, e_a , a]
\]
implies $[a,f,a] \in \C,$ another contradiction. We conclude that $f \leq e_a e_b$. Since $f \ll e_a$ we have $f = e_a e_b$ or $e_a \leq e_b$. We have already seen that $f = e_a e_b$ implies $[a,f,a] \in \C$. So we conclude that $e_a \leq e_b$, and hence $a \fl b$. Conversely, if $a \fl b$ then $f \leq e_a \leq e_b$ and
\[
[a,f,a] [b, e_b, b] = [a, f e_a e_b, b] = [a, f, a]. 
\]
So we have shown that $[a,f,a][b, e_b, b] = [a, f, a]$, when the product is nonzero.

Next suppose that $[a,f,a], [b, g,b] \in C^{\ll}$. We claim that
\[
[a,f,a][b,g,b] = \begin{cases}
                    [a,f,a] & \text{if $a = b$ and $f = g$,}\\
                    0       & \text{otherwise.}
\end{cases}
\]
Of the four cases to consider, we prove the claim when $f = e_{[f]}$, $g = e_{[g]}$, $[f] \not\fl a$, and $[g] \not\fl b$. The other cases involve similar arguments. Suppose that $[a,f,a][b,g,b] \neq 0$. Then $a \wedge b \neq 0$ and $fg \neq 0$. We consider the pair of idempotents $f$ and $e_a e_b$. Suppose that they are incomparable. Since $f e_a e_b \neq 0$, we have $f, e_a e_b \in C_{e_a}$. Thus $[f] \fl a$, a contradiction. So $f$ and $e_a e_b$ are comparable. But if $e_a e_b \leq f \leq e_a$ we have $[f] \fl a$. So $f \leq e_a e_b$. Since $f \ll e_a$, we know $f = e_a e_b$ or $e_a e_b = e_a$. The first case is impossible since it implies that $[f] = a \wedge b \fl a$, so $e_a e_b = e_a$ and hence $a \wedge b = a$. That is, $a \fl b$. A similar argument considering the pair $g$ and $e_a e_b$ shows that $b \fl a$. So $a = b$. Since $f,g \ll e_a$ with nonzero product we quickly see that $f = g$.

The proof that $C^{\ll}$ is the set of primitive idempotents in $CD(S)$ now follows quickly from the rules for multiplication that we have proved. 

\end{proof}

We now describe the labelled graph associated to the inverse hull of a Markov shift. This is closely related to the combinatorial data of $S$. 

\begin{defn} Let $S$ be the inverse hull of a Markov shift and 
\[\{e_a : a \in S/\D \} =  (\mcO^\uparrow - \mcO) \cup \{0\}\]
be a set of idempotent $\D$-class representatives for $S$. The \emph{labelled graph of $S$} is $(\E, \mcL)$ where
\begin{enumerate}
\item the vertex set $\E^{0} = S / \D - \{0\}$,
\item for $a \in \E^0$, nonzero $f \ll e_a$ and all $b \fl [f]$, there is an edge $x_{a,f,b} \in \E^{1}$ provided that $f \neq e_{[f]}$ or $[f] \not\fl v$, and
\item for $x_{a,f,b} \in \E^{1}, r(x_{a,f,b}) = a, s(x_{a,f,b}) = b$, and $\mcL(x_{a,f,b}) = (a,f)$.
\end{enumerate}
\end{defn}

\begin{exmp} Consider the Markov shift generated by the following transition matrix $T$ with alphabet $\{a,b,c\}$:
    \[T = \kbordermatrix{
    & a & b & c \\
    a & 1 & 1 & 0  \\
    b & 0 & 1 & 1 \\
    c & 1 & 1 & 1 }.\]
The semilattice is:
\begin{center}
\begin{tikzpicture}
  \node (a) at (3.5,2) {$a^{-1}a$};
  \node (b) at (6.5,2) {$b^{-1}b$};
  \node (c) at (5,3) {$c^{-1}c$};
  \node (ab) at (5,1) {$a^{-1}ab^{-1}b$};

  \node (a1) at (1.5,0) {$aa^{-1}$};
  \node (b1) at (5,0) {$bb^{-1}$};
  \node (c1) at (8.5,0) {$cc^{-1}$};

  \node (aa) at (0,-1) {$aaa^{-1}a^{-1}$};
  \node (ab1) at (2,-1) {$ab^{-1}ba^{-1}$};
  \node (ba) at (4,-1) {$ba^{-1}ab^{-1}$};
  \node (bc) at (6,-1) {$bcc^{-1}b^{-1}$};
  \node (ca) at (8.1,-1) {$ca^{-1}ac^{-1}$};
  \node (cb) at (9.9,-1) {$cb^{-1}bc^{-1}$};

  \node (aa1) at (-.5,-2) {};
  \node (ab11) at (1.5,-2) {};
  \node (aa2) at (.5,-2) {};
  \node (ab12) at (2.5,-2) {};
  \node (ba1) at (3.5,-2) {};
  \node (ba2) at (4.5,-2) {};
  \node (bc1) at (5.75,-2) {};
  \node (bc2) at (6.25,-2) {};

  \node (ca1) at (7.6,-2) {};
  \node (ca2) at (9,-2) {};
  \node (cb2) at (10.4,-2) {};

  \draw (a) -- (c);
  \draw (b) -- (c);
  \draw (a) -- (ab);
  \draw (b) -- (ab);
  \draw (a1) -- (a);
  \draw (b1) -- (ab);
  \draw (c1) -- (b);

  \draw (a1) -- (aa);
  \draw (a1) -- (ab1);
  \draw (b1) -- (ba);
  \draw (b1) -- (bc);
  \draw (c1) -- (ca);
  \draw (c1) -- (cb);

  \draw[dashed] (aa1) -- (aa);
  \draw[dashed] (ab11) -- (ab1);
  \draw[dashed] (aa2) -- (aa);
  \draw[dashed] (ab12) -- (ab1);
  \draw[dashed] (ba1) -- (ba);
  \draw[dashed] (ba2) -- (ba);
  \draw[dashed] (bc1) -- (bc);
  \draw[dashed] (bc2) -- (bc);
  \draw[dashed] (ca1) -- (ca);
  \draw[dashed] (ca2) -- (ca);
  \draw[dashed] (ca2) -- (cb);
  \draw[dashed] (cb2) -- (cb);
\end{tikzpicture}
\end{center}
Here we have $\mcO = \{aa^{-1}, bb^{-1}, cc^{-1}\}$ and there are four nonzero $\D$-classes with representatives $\{e_c = c^{-1} c, e_a = a^{-1} a, e_b = b^{-1} b, e_d = a^{-1} a b^{-1} b\}$. The vertex set is $\E^0 = \{a, b, c, d\}$ with $d \fl a, d \fl b, a\fl c$, $b \fl c$, and $d \fl c$. We have labels $\mcL = \{ \alpha, \beta, \gamma\}$ where $\alpha = (a, aa^{-1}), \beta = (d, bb^{-1})$ and $\gamma = (b, cc^{-1})$. The labelled graph is pictured below.
\begin{center}
    \begin{tikzpicture}
        \node (a) at (0, 1) {$a$};
        \node (b) at (2, 0) {$b$};
        \node (c) at (4,-1) {$c$};
        \node (d) at (0, -1) {$d$};

        \draw[->] (a) to[loop, out=225, in=135, looseness=7] node[left] {$\alpha$} (a);
        \draw[->] (d) to node[left] {$\alpha$} (a);
        
        \draw[->] (b) to[bend left] node[right=5] {$\beta$} (d);
        \draw[->] (d) to[loop, out=225, in=135, looseness=7] node[left] {$\beta$} (d);

        \draw[->] (a) to node[above] {$\gamma$} (b);
        \draw[->] (b) to[loop, out=105, in=15, looseness=8] node[above] {$\gamma$} (b);
        \draw[->] (c) to node[above] {$\gamma$} (b);
        \draw[->] (d) to[bend left] node[below] {$\gamma$} (b);
    \end{tikzpicture}
\end{center}
\end{exmp}

\begin{defn} An \emph{isomorphism of labelled graphs} $(\E_1, \mcL_1)$ and $(\E_2, \mcL_2)$ is a triple $\pi = (\pi_0, \pi_1, \pi_2)$ consisting of bijections $\pi_0 : \E_1^0 \to \E_2^0$, $\pi_1 : \E_1^1 \to \E_2^1$, $\pi_2 : \A_1 \to \A_2$ such that for each edge $e \in \E_1^1$, $s(\pi_1(e)) = \pi_0(s(e))$, $r(\pi_1(e)) = \pi_0(r(e))$, and $\mcL(\pi_1(e)) = \pi_2( \mcL(e))$.
\end{defn}

\begin{prop}\label{prop:graphiso} Suppose that $S$ is the inverse hull of a Markov shift. Then the labelled graph associated with $S$ is isomorphic to the labelled graph associated with the coherent set $\C$ in $S^{\fl}$.
\end{prop}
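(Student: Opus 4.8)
The plan is to exhibit the isomorphism $\pi = (\pi_0,\pi_1,\pi_2)$ explicitly and to reduce everything to the order isomorphism $(\C,\leq)\cong(S/\D,\fl)$ established in Proposition \ref{prop:CDineq}. For the vertices, recall that by Proposition \ref{prop:flD} the nonzero $\D$-classes of $S^{\fl}$ are exactly the $\D$-classes of the idempotents $[a,e_a,a]\in\C$, and distinct elements of $\C$ lie in distinct $\D$-classes; combined with Proposition \ref{prop:CDineq} this yields a bijection $\pi_0:S/\D-\{0\}\to S^{\fl}/\D-\{0\}$ sending $a$ to the $\D$-class of $[a,e_a,a]$. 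I would record at once that $\pi_0$ is an order isomorphism for $\fl$: since the $\C$-representative of $\pi_0(a)$ is $[a,e_a,a]$, the relation $\pi_0(a)\fl\pi_0(b)$ in $S^{\fl}/\D$ is by definition $[a,e_a,a]\leq[b,e_b,b]$, which Proposition \ref{prop:CDineq} identifies with $a\fl b$. Note also that the $\D$-class of $[a,f,a]$ equals $\pi_0([f])$, since $f\D e_{[f]}$.

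Next I would define the label and edge maps by $\pi_2(a,f)=(\pi_0(a),[a,f,a])$ and $\pi_1(x_{a,f,b})=x_{\pi_0(a),[a,f,a],\pi_0(b)}$. Granting that these land in the labelled graph of $\C$ (the obstacle below), the compatibility conditions are immediate: the range and source of $\pi_1(x_{a,f,b})$ are $\pi_0(a)$ and $\pi_0(b)$, matching $\pi_0$ applied to the range and source of $x_{a,f,b}$, and its label is $(\pi_0(a),[a,f,a])=\pi_2(a,f)$. Injectivity of $\pi_1$ and $\pi_2$ follows because $[a,f,a]=[a,f',a]$ forces $f=f'$ (from the description of $\gamma$) and $\pi_0$ is injective.

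The heart of the argument, and the main obstacle, is to check that $\pi_1,\pi_2$ genuinely biject the edge and label sets, i.e.\ that the edge-existence conditions coincide. Here I would prove three equivalences for $a\in S/\D-\{0\}$ and a nonzero idempotent $f\ll e_a$. First, $[a,f,a]\ll[a,e_a,a]$ in $S^{\fl}$ if and only if $f\ll e_a$ in $S$; this is a direct computation with Proposition \ref{prop:flineq}, inserting an intermediate idempotent $[a,g,a]$ to transport ``no idempotent strictly between'' across. Second, and most delicately, $[a,f,a]\notin\C$ if and only if $f\neq e_{[f]}$ or $[f]\not\fl a$: the unique element of $\C$ in the $\D$-class of $[a,f,a]$ is $[[f],e_{[f]},[f]]$, so $[a,f,a]\in\C$ amounts to $f=e_{[f]}$ together with $a\wedge[f]\neq0$; but when $f=e_{[f]}\leq e_a$, Proposition \ref{prop:flmarkov}(1) gives $e_{a\wedge[f]}=e_ae_{[f]}=e_{[f]}$, so $a\wedge[f]=[f]$, whence $a\wedge[f]\neq0$ is equivalent to $[f]\fl a$. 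Negating, $[a,f,a]\notin\C$ is exactly the edge condition in the labelled graph of $S$. Third, the fan-out condition $b\fl[f]$ on edges with label $(a,f)$ corresponds under $\pi_0$ to $\pi_0(b)\fl\pi_0([f])$, which is the source condition $U\fl[F]$ in the labelled graph of $\C$ since $\pi_0$ is an order isomorphism and $[F]=\pi_0([f])$.

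Finally I would verify surjectivity: given an edge $x_{V,F,U}$ of the labelled graph of $\C$, write $V=\pi_0(a)$ so that $e_V=[a,e_a,a]$. Any idempotent $F=[c,e,c]\ll[a,e_a,a]$ has $c\wedge a\neq0$ by Proposition \ref{prop:flineq}, so $(c,e,c)\,\gamma\,(a,e,a)$ and $F=[a,e,a]$; the first equivalence then gives $e\ll e_a$, and the second shows the pair $(a,e)$ is admissible on the $S$-side precisely because $F\notin\C$. With $U=\pi_0(b)$ and $b\fl[e]$ supplied by the third equivalence, we get $x_{V,F,U}=\pi_1(x_{a,e,b})$, and likewise every label is hit. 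This establishes that $(\pi_0,\pi_1,\pi_2)$ is an isomorphism of labelled graphs. The delicate point to get right is the second equivalence, since that is where the non-redundancy of the clause ``$f\neq e_{[f]}$ or $[f]\not\fl a$'' lives: $e_{[f]}\leq e_a$ does not by itself force $[f]\fl a$, and it is Proposition \ref{prop:flmarkov}(1) (together with $a\wedge[f]\neq0$ being built into $\leq$ in $S^{\fl}$) that supplies the equivalence.
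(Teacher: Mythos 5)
Your proposal is correct and follows essentially the same route as the paper: the same vertex map $a \mapsto [\,[a,e_a,a]\,]$ via Proposition \ref{prop:flD}, the same edge and label maps $x_{a,f,b} \mapsto x_{\pi_0(a),[a,f,a],\pi_0(b)}$, and the same key computation showing $[a,f,a] \in \C_{S^{\fl}}$ if and only if $f = e_{[f]}$ and $[f] \fl a$ (via Proposition \ref{prop:flmarkov}(1)). The only difference is one of thoroughness: you prove both directions of the membership equivalence and spell out the transfer of $\ll$ and the surjectivity of the edge map, which the paper compresses into ``it is easily verified.''
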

\begin{proof} 

Consider the map $\sigma : S / \D  \to S^{\fl} / \D$ defined by $\sigma([f]) = [f']$, where $f' = [[f],f,[f]]$. By Proposition \ref{prop:flD}, $\sigma$ is a well-defined bijection and the restriction to nonzero $\D$-classes defines a bijection $\sigma : \E^0_1 \to \E^0_2$. Let $x_{a,f,b}$ be an edge in the labelled graph associated with $S$. Then $f \ll e_a$ and $b \fl [f]$. Also, either $f \neq e_{[f]}$ or $[f] \not\fl a$. Let $\overline{f} = [a, f, a]$. We claim that $x_{\sigma(a), \overline{f}, \sigma(b)}$ is an edge in the labelled graph associated with $S^{\fl}$. First note that $0 \neq [a,f,a] \ll [a, e_a, a]$. Also $e_b \leq e_{[f]}$ by Lemma \ref{lem:ineq}, so 
\[[
b,e_b,b] = [[f],e_b,[f]] \leq [[f],e_{[f]},[f]].
\]
Thus, $\sigma(b) \in B_{[\overline{f}]}$. Finally, suppose for the sake of contradiction that $\overline{f} = e_{[\overline{f}]}$. Then $[a, f, a] = [[f], e_{f}, [f]]$. So $f = e_{[f]}$ and $a \wedge [f] \neq 0$. Then $e_{[f]} = e_{[f]} e_a = e_{[f] \wedge a}$. Thus $[f] \fl a$. So we have that $f = e_{[f]}$ and $[f] \fl a$, a contradiction. This shows that $\overline{f} \neq e_{[\overline{f}]}$, and we conclude $x_{\sigma(a), \overline{f}, \sigma(b)}$ is an edge. It is easily verified that $\pi_1 : \E^1_1 \to \E^1_2$ defined by
\[
\pi_1(x_{a,f,b}) = x_{\sigma(a), \overline{f}, \sigma(b)} 
\]
is a bijection. 
By the same reasoning, the map $\pi_2$ sending $(a,f) \mapsto (\sigma(a), \overline{f})$ defines a bijection on labels. One can quickly check that the maps satisfy $s(\pi_1(e)) = \sigma(s(e))$, $r(\pi_1(e)) = \sigma(r(e))$, and $\mcL(\pi_1(e)) = \pi_2(\mcL(e))$.

\end{proof}

Finally we prove the main theorem of the paper which shows that the labelled graph of the inverse hull of a Markov shift is a complete Morita equivalence invariant among inverse hulls of Markov shifts.

\begin{thm}\label{thm:markovmorita} Suppose that $S$ and $T$ are the inverse hulls of Markov shifts. Then the following are equivalent:
\begin{enumerate}
\item $S$ is Morita equivalent to $T$.
\item There exists a $\D$-class preserving isomorphism 
\[\pi : CD(S^{\fl}) \to CD(T^{\fl}).\]
\item The labelled graphs associated with $S$ and $T$ are isomorphic with the map on vertices being an order isomorphism.
\end{enumerate}
\end{thm}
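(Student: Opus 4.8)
My plan is to prove the cycle $(1)\Rightarrow(2)\Rightarrow(3)\Rightarrow(1)$. The unifying point is that all three objects are assembled from the same data. By the previous two sections $S$ is Morita equivalent to $S^{\fl}$, the set $\C$ of Proposition~\ref{prop:CDineq} is a coherent set for $S^{\fl}$, and by Theorem~\ref{thm:labelledgraph} the semigroup $S^{\fl}$ is Morita equivalent to the labelled graph inverse semigroup $(S^{\fl})_\C$, whose underlying labelled graph is isomorphic to the labelled graph of $S$ by Proposition~\ref{prop:graphiso} (and symmetrically for $T$). Moreover, by Proposition~\ref{propCD} the primitive idempotents of $CD(S^{\fl})$ are exactly the elements of $\C^{\ll}$, which are precisely the idempotents $\overline f\ll e_v$ with $\overline f\neq e_{[\overline f]}$ carrying the edge-labels of that labelled graph, while the non-primitive nonzero idempotents are the vertex representatives $\C$, whose natural order records the poset $(S/\D,\fl)$. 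Thus $(2)$ and $(3)$ are two readings of the same combinatorial object, and $(1)$ is what this object is meant to classify.

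For $(1)\Rightarrow(2)$, I would compose the equivalences $C(S^{\fl})\simeq C(S)\simeq C(T)\simeq C(T^{\fl})$ to obtain an equivalence functor $\F:C(S^{\fl})\to C(T^{\fl})$. This gives the $\D$-class bijection $\sigma$ of Lemma~\ref{lem:equivalence}, which is an order isomorphism of $(S^{\fl}/\D,\fl)$ onto $(T^{\fl}/\D,\fl)$ by the proposition stating that $\fl$ is a Morita equivalence invariant; this matches the vertex representatives. For each vertex $a$ I would then invoke the local isomorphism $\tau_a:e_aS^{\fl}e_a\to e'_{\sigma(a)}T^{\fl}e'_{\sigma(a)}$ of Lemma~\ref{lem:equivalence}(2), which preserves the natural order and sends any idempotent $\overline f\ll e_a$ to an idempotent $\tau_a(\overline f)\ll e'_{\sigma(a)}$ with $[\tau_a(\overline f)]=\sigma([\overline f])$. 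I define $\pi$ to be $\sigma$ on $\C$ and $\overline f\mapsto\tau_a(\overline f)$ on the primitive idempotents, and aim to show $\pi$ is a well-defined, $\D$-preserving semilattice isomorphism $CD(S^{\fl})\to CD(T^{\fl})$.

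The main obstacle is exactly this verification. First, a primitive idempotent $[a,f,a]$ may lie immediately below several vertex representatives, so it can be presented over several vertices $a$; I must check the images under the various $\tau_a$ agree, which should follow from the fact that they all descend from the single functor $\F$ and hence are compatible on the overlaps $e_ae_{a'}S^{\fl}e_ae_{a'}$. Second, I must ensure $\pi$ carries $CD(S^{\fl})$ into $CD(T^{\fl})$, i.e.\ preserves the distinction ``$\overline f\neq e_{[\overline f]}$'' separating $\C^{\ll}$ from $\C$; since $\tau_a$ preserves both the order and $\D$-classes, and since by the disjointness of $\C$ and $\C^{\ll}$ this distinction is determined by whether the cover $\overline f\ll e_a$ is a representative of a vertex immediately $\fl$-below $a$ (data that $\sigma$ respects), membership in $\C^{\ll}$ is preserved. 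Multiplicativity and bijectivity then reduce to the multiplication rules of Proposition~\ref{propCD} together with the fact that each $\tau_a$ is an isomorphism, and $\D$-preservation is inherited from $\sigma$ and Proposition~\ref{prop:flD}.

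For $(2)\Rightarrow(3)$ I would read the labelled graph directly off of $CD(S^{\fl})$: a $\D$-preserving isomorphism $\pi:CD(S^{\fl})\to CD(T^{\fl})$ restricts to a bijection of primitive idempotents and induces an order isomorphism on the vertex poset $\C\cong(S/\D,\fl)$. For each primitive idempotent $\overline f$ and each vertex $v$ with $\overline f\ll e_v$ there is a label $(v,\overline f)$ whose range is $v$ and whose edges have source set $B_{[\overline f]}=\{\overline u:\overline u\fl[\overline f]\}$; since $\pi$ preserves covers, primitivity, and $\D$-classes, it transports all of this structure, so by Proposition~\ref{prop:graphiso} the labelled graphs of $S$ and $T$ are isomorphic with order-isomorphic vertex maps. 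Finally, for $(3)\Rightarrow(1)$ I would upgrade an isomorphism of labelled graphs whose vertex map is an order isomorphism to an isomorphism $(S^{\fl})_\C\cong(T^{\fl})_\C$: because each $\B$ is $\{B_v:v\in\E^0\}\cup\{\emptyset\}$ with $B_v=\{w:w\fl v\}$, an order isomorphism of vertices carries $\B_S$ onto $\B_T$, and the triple $(\pi_0,\pi_1,\pi_2)$ induces the map $(\alpha,A,\beta)\mapsto(\widetilde\pi(\alpha),\pi_0(A),\widetilde\pi(\beta))$, which respects relative sources and hence the product. Combining this with Proposition~\ref{prop:graphiso} and the chains $S\sim S^{\fl}\sim(S^{\fl})_\C$ and $T\sim T^{\fl}\sim(T^{\fl})_\C$ from Theorem~\ref{thm:labelledgraph} yields $S\sim(S^{\fl})_\C\cong(T^{\fl})_\C\sim T$. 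I expect essentially all the genuine difficulty to sit in the compatibility and $\C^{\ll}$-preservation checks of $(1)\Rightarrow(2)$; the other two implications are bookkeeping over the structural results already in hand.
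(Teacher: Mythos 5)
Your proposal follows the paper's own architecture exactly: the same cycle $(1)\Rightarrow(2)\Rightarrow(3)\Rightarrow(1)$, with Lemma~\ref{lem:equivalence} driving $(1)\Rightarrow(2)$, the structure of $CD$ from Propositions~\ref{prop:CDineq} and~\ref{propCD} driving $(2)\Rightarrow(3)$, and Proposition~\ref{prop:graphiso} plus Theorem~\ref{thm:labelledgraph} closing $(3)\Rightarrow(1)$. The genuine gap sits precisely where you predicted the difficulty would be, and your proposed resolution does not close it. Defining $\pi$ on primitives by $\overline{f} \mapsto \tau_a(\overline{f})$ requires knowing that $\tau_a$ never sends a non-canonical cover to a canonical one; equivalently, that $\tau_a(e_b) = f_{\sigma(b)}$ whenever $e_b \ll e_a$ with $b \fl a$. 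Your justification --- that $\tau_a$ preserves order and $\D$-classes, together with disjointness of $\C$ and $\C^{\ll}$ --- is circular: all covers of $e_a$ lying in the single $\D$-class $b$ are indistinguishable by the data you cite, so ``being the distinguished representative'' is exactly the property you must prove $\tau_a$ respects, not one you may assume it respects. If $\tau_a$ were to send some $f \ll e_a$ with $[f]=b$, $f \neq e_b$, to $f_{\sigma(b)}$, then $\pi([a,f,a]) = [\sigma(a), f_{\sigma(b)}, \sigma(a)] = [\sigma(b), f_{\sigma(b)}, \sigma(b)] = \pi([b,e_b,b])$, so $\pi$ would land in $\C_{T^{\fl}}$ rather than $\C^{\ll}_{T^{\fl}}$ and would not even be injective.

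What rules this out is the Markov-shift-specific core machinery, and that is where the paper's proof does its real work: $e_b \ll e_a$ forces $e_b \in C_{e_a}$; cores are defined purely order-theoretically inside $e_a^{\downarrow}$, so the isomorphism $\tau_a$ carries $C_{e_a}$ onto $C_{f_{\sigma(a)}}$; and, as noted in the proof of Lemma~\ref{lem:ineq}, $C_{f_{\sigma(a)}} \subseteq \O_T^{\uparrow} - \O_T$, whose elements are the \emph{unique} representatives of their $\D$-classes. Hence $\tau_a(e_b)$, being $\D$-related to $f_{\sigma(b)}$ and lying in $\O_T^{\uparrow} - \O_T$, must equal $f_{\sigma(b)}$. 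Your sketch never invokes cores, and the step cannot be recovered from Lemma~\ref{lem:equivalence} alone. Two smaller remarks: your first ``obstacle'' (a primitive presented over several vertices) is vacuous, since $[a,f,a]=[a',f,a']$ with both in $\C^{\ll}$ gives $f \leq e_{a\wedge a'} \leq e_a$, and $f \ll e_a$ then forces either $f = e_{a \wedge a'}$ (making $[a,f,a] \in \C$, contradicting disjointness) or $a = a \wedge a' = a'$. And in $(2)\Rightarrow(3)$ you still owe the argument that $\pi$ sends a primitive over vertex $a$ to one over $\sigma_0(a)$; the paper proves this by an incomparability argument, though in your framework it also follows from the observation that $[a,e_a,a]$ is the least element of $\C$ above $[a,f,a]$ and $\pi$ is an order isomorphism. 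The remainder of your plan for $(2)\Rightarrow(3)$ and $(3)\Rightarrow(1)$ matches the paper and is sound.
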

\begin{proof}
Throughout the proof assume that we have chosen sets 
\[
\{e_a : a \in S/\D \} =  (\mcO_{S}^\uparrow - \mcO_{S}) \cup \{0\} \text{ and } \{f_v : v \in T/\D \} =  (\mcO_{T}^\uparrow - \mcO_{T}) \cup \{0\}
\]
of $\D$-class representatives of $S$ and $T$ respectively.

(1) $\Rightarrow$ (2): First, suppose that $S$ is Morita equivalent to $T$ via an equivalence functor $\F:C(S) \to C(T)$. We will prove that there is a $\D$-class preserving isomorphism $\pi : CD(S^{\fl}) \to CD(T^{\fl})$. By Proposition \ref{lem:equivalence} (1), there exists a bijection $\sigma : S/\D \to T/\D$, such that $\sigma([e]) = [\F(e)]$ for all idempotents $e \in S$. We define $\pi$ on $\C_{S}$ by $\pi( [a, e_a, a] ) = [\sigma(a), f_{\sigma(a)}, \sigma(a)]$. Next, for a given nonzero $a \in S/\D$, let $e_a^{\ll} = \{f : 0\neq f \ll e_a\}$. Then $e_a^{\ll}$ is the disjoint union of the sets $e_{a,b}^{\ll} = \{f \in e_a^{\ll} : [f] = b\}$ over all nonzero $b \in S / \D$. By Proposition \ref{lem:equivalence} (2), there is a $\D$-preserving isomorphism $\tau_a : e_a S e_a \to f_{\sigma(a)} T f_{\sigma(a)}$. 

For nonzero $b \in S/ \D$ with $b \not\fl a$, the restriction of $\tau_{a}$ to $e_{a,b}^{\ll}$ gives a bijection $\pi_{a,b} : e_{a,b}^{\ll} \to f_{\sigma(a),\sigma(b)}^{\ll}$. Next suppose $b \fl a$. We claim that $e_b \ll e_a$ if and only if $f_{\sigma(b)} \ll f_{\sigma(a)}$. Suppose $e_b \ll e_a$. Then $e_b \in C_{e_a}$ and there exists $f \in \tau_a(C_{e_a}) = C_{f_{\sigma(a)}}$ with $f \ll f_{\sigma(a)}$ and $f \D f_{\sigma(b)}$. As noted in the proof of Lemma \ref{lem:ineq}, $C_{f_{\sigma(a)}} \subseteq \mcO^{\uparrow}_T - \mcO_T$. Thus $f = f_{\sigma(b)}$. The converse is similar. It follows that we can find a bijection $\pi_{a,b} : e_{a,b}^{\ll} - \{e_b\} \to f_{\sigma(a),\sigma(b)}^{\ll} - \{f_{\sigma(b)}\}$, possibly different than the restriction of $\tau_{a}$. For $[a,f,a] \in \C^{\ll}_{S^{\fl}}$ we define $\pi([a,f,a]) = [\sigma(a), \pi_{a,[f]}(f), \sigma(a)]$. One can quickly verify that $\pi$ is well-defined. (Note the significant detail in the case that $[f] \fl a$ that the range of $\pi_{a,[f]}$ does not include $f_{\sigma{([f])}}$.)

By construction, $\pi$ is a $\D$-class preserving bijection. We show that $\pi$ is an order isomorphism (and hence a semilattice isomorphism as well). Let $E,F \in CD(S^{\fl})$ with $E \leq F$. We have already considered the case that $E,F \in \C_{S^{\fl}}$. We may assume $0 \neq E \in \C^{\ll}_{S^{\fl}}$. If $F \in \C^{\ll}_{S^{\fl}}$, then by Proposition \ref{propCD}, $F = E$ and $\pi(E) = \pi(F)$. Suppose $F \in \C_{S^{\fl}}$ and write $E = [a, f, a], F = [b, e_b, b]$. From the proof of Proposition \ref{propCD} we know that $a \fl b$. Thus $f \ll e_a \leq e_b$. We have that $\pi_{a,[f]}(f) \leq f_{\sigma(a)}$. Moreover $a \fl b$ implies $f_{\sigma(a)} \leq f_{\sigma(b)}$. Thus $\pi_{a,[f]}(f) \leq f_{\sigma(a)} \leq f_{\sigma(b)}$ and hence $\pi(F) \leq \pi(E)$. By a similar argument, if $\pi(E) \leq \pi(F)$ then $E \leq F$. Thus $\pi$ is a $\D$-class preserving isomorphism.

(2) $\Rightarrow$ (3): Next suppose that $\pi : CD(S^{\fl}) \to CD(T^{\fl})$ is a $\D$-class preserving isomorphism and let $(\E_1, \mcL_1)$ and $(\E_2, \mcL_2)$ be the labelled graphs with label sets $\A_1$ and $\A_2$ associated with $S$ and $T$ respectively. Define a map $\sigma_{0} : \E^{0}_1 \to \E^{0}_2$ by $\sigma(a) = u$ if and only if $\pi([a, e_a, a]) = [u, e_u, u]$. Note that $[v, e_v, v] = [u, e_u, u]$ if and only if $u = v$, so $\sigma_0$ is a well-defined bijection. We also have $a \fl b$ if and only if $\sigma_0(a) \fl \sigma_0(b)$, so that $\sigma_0$ is an order isomorphism.

Next, let $[a,f,a] \in \C_S^{\ll}$ and write $\pi([a,f,a]) = [u, g, u]$, where $[u,g,u] \ll [u, f_u, u]$. We aim to show that $u = \sigma_0(a)$. We have that $[u,g,u] \ll [u, f_u, u]$ and $[u,g,u] \ll \pi([a,e_a,a]) = [\sigma_0(a), f_{\sigma_0(a)}, \sigma_0(a)]$. Thus $u \wedge \sigma_0(a) \neq 0, g \ll f_u,$ and $g \ll f_{\sigma_0(a)}$. If $f_u$ and $f_{\sigma_0(a)}$ are incomparable, then it follows that $g = e_{[g]}$ with $[g] \fl u$, a contradiction. Then since $g \ll f_u,$ and $g \ll f_{\sigma_0(a)}$ we must have $f_u = f_{\sigma_0(a)}$ and $u = \sigma_0(a)$.

Suppose now that $x_{a,f,b} \in \E_1^1$. We define a map $\sigma_1 : \E_1^1 \to \E_2^1$ by $\sigma_1( x_{a,f,b} ) = x_{\sigma_0(a), g, \sigma_0(b)}$ where $\pi([a,f,a]) = [\sigma_0(a), g, \sigma_0(a)]$. We verify that $\sigma_1( x_{a,f,b} )$ is indeed an edge in $\E_2$. Then it can be quickly shown that $\sigma_1$ is a well-defined bijection. Since $x_{a,f,b} \in \E_1^1$, we have $f \ll e_a$, $b \fl [f]$, and either $f \neq e_{[f]}$ or $[f] \not\fl a$. Now $g \ll f_{\sigma_0(a)}$ with either $g \neq f_{[g]}$ or $[g] \not\fl \sigma_0(a)$. Also,
\begin{align*}
[\sigma_0(b), e_{\sigma_0(b)}, \sigma_0(b)] &= \pi([b,e_b,b]) \\
                                            &\leq \pi([[f],e_{[f]},[f]]) \\
                                            &= [\sigma_0([f]), e_{\sigma_0([f])}, \sigma_0([f])]
\end{align*}
implies that $\sigma_0(b) \leq \sigma_0([f])$. Thus $\sigma_1 : \E_1^1 \to \E_2^1$ is a bijection. Similarly, $(a,f) \mapsto (\sigma_0(a), g)$ defines a bijection $\sigma_2$ from $\A_1$ to $\A_2$. For each edge $e$ we have $s(\sigma_1(e)) = \sigma_0(s(e))$, $r(\sigma_1(e)) = \sigma_0(r(e))$, and $\mcL(\sigma_1(e)) = \sigma_2( \mcL(e))$. Thus $(\E_1, \mcL_1)$ is isomorphic to $(\E_2, \mcL_2)$. 

(3) $\Rightarrow$ (1): Finally, let $(\E_1, \mcL_1)$ and $(\E_2, \mcL_2)$ be the labelled graphs associated with $S$ and $T$ respectively and suppose that $\pi = (\pi_0, \pi_1, \pi_2)$ is an isomorphism from $(\E_1, \mcL_1)$ to $(\E_2, \mcL_2)$ and $\pi_0$ is an order isomorphism. By Proposition \ref{prop:graphiso}, the labelled graphs associated with the coherent sets $\C_S$ and $\C_T$ are isomorphic via a map $\pi' = (\pi'_0, \pi'_1, \pi'_2)$ where $\pi'_0([a, e_a, a]) = [\pi_0(a), e_{\pi_0(a)},\pi_0(a)]$ is an order isomorphism. It follows that the labelled graph spaces associated with $\C_S$ and $\C_T$ are isomorphic and hence the corresponding labelled graph inverse semigroups are isomorphic. Thus $S^{\fl}$ is Morita equivalent to $T^{\fl}$ and hence $S$ is Morita equivalent to $T$.

\end{proof}
\bibliographystyle{amsplain}
\bibliography{SemigroupBib.bib}
\end{document}